%
\documentclass[reqno]{amsart}      
\usepackage{amsthm}  
\usepackage{graphicx}
\usepackage{mathptmx}      
\pagestyle{plain}

%
\usepackage{amsfonts}
\usepackage{amssymb}
\usepackage{multirow}
\usepackage{bm}
\usepackage{xcolor}
\usepackage{comment}
\usepackage{arydshln}
\usepackage{algorithm}
\usepackage{algpseudocode}

\usepackage[hidelinks]{hyperref}
\usepackage[justification=centering]{caption}
\usepackage{subcaption}
\usepackage{epstopdf}
\graphicspath{ {./images/} }
\usepackage{float}
\usepackage{csvsimple, booktabs, datatool}
\usepackage{siunitx}
\usepackage{tabularx}
\newcolumntype{P}[1]{>{\centering\arraybackslash}m{#1}}
\usepackage{adjustbox}
\usepackage{tikz}
\makeatletter
\let\cl@chapter\undefined
\makeatother
\usepackage{cleveref}
\makeatletter
\providecommand\theHALG@line{\thealgorithm.\arabic{ALG@line}}
\makeatother
\usepackage{ulem}
\usepackage{array}

\newtheorem{definition}{Definition}

\newtheorem{theorem}{Theorem}
\newtheorem{lemma}{Lemma}
\newtheorem{proposition}{Proposition}

\newtheorem{remark}{Remark}

\makeatletter
\newcommand{\Rmnum}[1]{\expandafter\@slowromancap\romannumeral #1@}
\makeatother
%

\usepackage{caption}
\captionsetup{
  font=small,
  labelfont=bf,
  format=plain,
  justification=centering
}

\def\<#1,#2>{\langle #1,#2 \rangle}

\DeclareMathOperator{\rank}{rank}

\DeclareMathOperator{\argmin}{argmin}

\newcommand{\cS}{\mathcal{S}}
\newcommand{\cA}{\mathcal{A}}

\newcommand{\cF}{\mathcal{F}}
\newcommand{\cH}{\mathcal{H}}

\newcommand{\R}{\mathbb{R}}
\newcommand{\N}{\mathbb{N}}
\newcommand{\bI}{\mathbf{I}}
\newcommand{\bzero}{\mathbf{0}}
\newcommand{\bone}{\mathbf{1}}

\newcommand{\algorithmicbreak}{\textbf{break}}

\algnewcommand{\IIf}[1]{\State\algorithmicif\ #1\ \algorithmicthen}
\algnewcommand{\EndIIf}{\unskip\ \algorithmicend\ \algorithmicif}
\makeatletter

\makeatother

\usepackage[colorinlistoftodos,prependcaption,textsize=tiny]{todonotes}

%

%




\tikzstyle{nodes1} = [circle, rounded corners, minimum width=1cm, minimum height=1cm,text centered, draw=black, fill=red!30]
\tikzstyle{arrow} = [thick,->,>=stealth]

\allowdisplaybreaks[4]

\begin{document}

\title{Progressive Bound Strengthening via Doubly Nonnegative Cutting Planes for Nonconvex Quadratic Programs}

\author{Zheng Qu}
\address{School of Mathematical Sciences, Shenzhen University, 518061, Shenzhen, P. R. China}
\email{zhengqu@szu.edu.cn}

\author{Defeng Sun}
\address{Department of Applied Mathematics, The Hong Kong Polytechnic University, Hong Kong}
\email{defeng.sun@polyu.edu.hk}

\author{Jintao Xu}
\address{Department of Applied Mathematics, The Hong Kong Polytechnic University, Hong Kong}
\email{xujtmath@163.com}

\keywords{Nonconvex quadratic programming, Cutting plane method, Doubly nonnegative relaxation, Semidefinite programming}

\makeatletter
\renewcommand{\keywordsname}{Keywords}
\makeatother

\subjclass[2020]{90C20, 90C22, 90C26}

\makeatletter
\renewcommand{\subjclassname}{%
  \textup{2020} Mathematics Subject Classification}
\makeatother
\date{}
\thanks{Defeng Sun was supported in part by the Research Center for Intelligent Operations Research and RGC Senior Research Fellow Scheme No. SRFS2223-5S02.  
Jintao Xu was partially supported  by the PolyU postdoc matching fund scheme of The Hong Kong Polytechnic University Grant No. 1-W35A, and Huawei's
Collaborative Grants ``Large scale linear programming solver'' and ``Solving
large scale linear programming models for production planning''. }

\maketitle
\begin{abstract}

We introduce a cutting-plane framework for nonconvex quadratic programs (QPs) that progressively tightens convex relaxations. Our approach leverages the doubly nonnegative (DNN) relaxation to compute strong lower bounds and generate separating cuts, which are iteratively added to improve the relaxation. We establish that, at any Karush–Kuhn–Tucker (KKT) point satisfying a second-order sufficient condition, a valid cut can be obtained by solving a linear semidefinite program (SDP), and we devise a finite-termination local search procedure to identify such points. Extensive computational experiments on both benchmark and synthetic instances demonstrate that our approach yields tighter bounds and consistently outperforms leading commercial and academic solvers in terms of efficiency, robustness, and scalability. Notably, on a standard desktop, our algorithm reduces the relative optimality gap to  $10^{-4}$ on 138 out of 140 instances of dimension  $n=100$ within one hour, without resorting to branch-and-bound.

\end{abstract}

\section{Introduction}

We consider the following nonconvex quadratic programming (QP) problem:
\begin{equation}\label{eq:QP}
\begin{aligned}
\min_{x \in \mathbb{R}^n} \quad & x^\top Q x + 2 d^\top x \\
\text{s.t.} \quad & A x \leq b.
\end{aligned}
\end{equation}
Hereinafter, $Q$ is a symmetric matrix with at least one negative eigenvalue, and the feasible region $\{x \in \mathbb{R}^n : A x \leq b\}$ is bounded. Note that this implies $\operatorname{rank}(A) = n$.

The QP problem~\eqref{eq:QP} arises naturally in diverse fields such as quadratic knapsack problems~\cite{Vavasis1992,LiDengLuWuDaiWang2023}, game theory~\cite{LiuzziLocatelliPiccialliRass2021}, and maximum clique problems~\cite{MotzkinStraus1965}. 
Considerable progress has been made in developing efficient algorithms to find high-quality local optima~\cite{Vavasis1992a,EdirisingheJeong2017,ZhangHanPang2024}.  Finding global solutions to nonconvex QP problems remains challenging, as these problems are NP-hard~\cite{pardalos1991quadratic}. Nevertheless, the special structure of QP problems enables the design of various global solvers, with branch-and-bound methods being among the most notable approaches~\cite{burer2008finite,chen2012globally,LiuzziLocatelliPiccialli2022}.


QP problems can be reformulated in different ways, including bilinear reformulation~\cite{LiuzziLocatelliPiccialliRass2021}, spectral reformulation~\cite{nohra2021}, copositive reformulation~\cite{Burer2012} and KKT reformulation~\cite{burer2008finite}. 
Different reformulations lead to diverse convex relaxations (linear relaxations~\cite{SheraliAdams1999,TawarmalaniSahinidis2004,LiuzziLocatelliPiccialli2022}, convex quadratic relaxations~\cite{PardalosGlickRosen1987,horst2013handbook,nohra2021} and semidefinite relaxations~\cite{Shor1987,Nowak1999,SheraliFraticelli2002}), as well as different branching strategies aligned with the structure of the reformulated problem~\cite{burer2008finite,nohra2021,LiuzziLocatelliPiccialli2022}. The effectiveness of the branching process depends largely on the quality of the relaxation bounds employed. 
 Consequently, designing advanced techniques for tightening the relaxation bounds is essential for enhancing the performance of all global algorithms \cite{VandenbusscheNemhauser2005a,Bonami2019,LiuzziLocatelliPiccialli2022,LocatelliPiccialliSudoso2025}. In this paper, we propose a new method for effectively improving the relaxation bound of~\eqref{eq:QP} by progressively adding cutting planes.

\subsection{A Generic Cutting Plane Framework}\label{subsec:cutpf}
We start by presenting a generic cutting plane framework for solving~\eqref{eq:QP}. Throughout the paper we denote by $\Phi$ the quadratic objective function of~\eqref{eq:QP}:
$$
\Phi(x)\equiv x^\top Q x +2 d^\top x
$$
and by $\cF$ the feasible region of~\eqref{eq:QP}:
$$
\cF:=\{x\in \R^n: Ax\leq b\}.
$$
Let $\mathcal{A} \subset \cF$ and $\bar x\in \mathcal{A}$ be a feasible solution. Let $\nu_R\leq \Phi(\bar x)$ be a reference value. 
  We say that $c\in \R^n$ defines a \textit{valid cut respect to $\bar x$, $\mathcal{A}$ and $\nu_{_R}$} if the following condition holds:
\begin{equation}
\begin{aligned}
    \nu_{_R} \leq \min \{\Phi(x): x\in  \mathcal{A},  c^\top(x-\bar x) \leq 1 \}    
    \end{aligned}.
\end{equation}

\begin{center}
\fbox{
\begin{minipage}{0.7\textwidth}
\begin{center}
\textbf{Generic Cutting Plane Method For Bound Tightening}
\end{center}
Parameters: gap tolerance parameters $\epsilon>0$.

Initialization: $\bar v =+\infty$, ${\mathcal{A}}=\cF$.

Repeat:
\begin{enumerate}
    \item Find a feasible point $\bar x \in \mathcal{A}$.
    \item Update the upper bound $\bar v:=\min (\bar v, \Phi(\bar x))$.
    \item Update the reference value $\nu_R$.
    \item Compute a valid cut $c$ with respect to $\bar x$, $\mathcal{A}$ and $\nu_R$.
    \item Update ${\mathcal{A}}\leftarrow  {\mathcal{A}} \cap \{x\in \R^n: c^\top (x-\bar x)\geq 1\}$.
    \item Compute a lower bound $\underline v^r$ such that $\underline v^r \leq \Phi^*(\mathcal{A})$.
\end{enumerate}
until $\bar v-\underline v^r \leq \epsilon |\bar v| $.
\end{minipage}
}
\end{center}
In the cutting plane method described above, we maintain an upper bound $\bar{v} \geq \Phi^*(\mathcal{F})$ that decreases throughout the iterations. At each iteration, the method solves the optimization problem
\begin{equation}\label{eq:minPhiA}
\min \{ \Phi(x) : x \in \mathcal{A} \},
\end{equation}
where $\mathcal{A} \subset \mathcal{F}$ is a polyhedral set obtained by iteratively adding {valid cuts} such that
$
\Phi^*(\mathcal{F} \setminus \mathcal{A}) \geq \nu_R.
$
Therefore, at each iteration we always have
$
\bar{v} \geq \Phi^*(\mathcal{F}) \geq \min(\underline{v}^r, \nu_R),
$
where $\underline{v}^r$ is a lower bound for $\Phi^*(\mathcal{A})$. The choice of $\nu_R$ affects the quality of the final result. For example, if we update $\nu_R$ as $\bar{v} - \epsilon$ at each iteration, then upon termination we have
$
\bar{v} \geq \Phi^*(\mathcal{F}) \geq \bar{v} - \epsilon \max(1, |\bar{v}|).
$

The feasible point $\bar x\in \cA$ will be excluded by adding the cut $\{x \in \mathbb{R}^n : c^\top (x - \bar{x}) \geq 1\}$. Furthermore,    if $\mathcal{A}$ has a nonempty interior and $\bar{x} \in \mathcal{A}$, then for any vector $c \in \mathbb{R}^n$, the volume of
$
\mathcal{A} \cap \{x \in \mathbb{R}^n : c^\top (x - \bar{x}) \geq 1\}
$
is strictly smaller than that of $\mathcal{A}$. Therefore, the volume of $\mathcal{A}$ is expected to decrease at each iteration of the cutting plane method, and accordingly, we anticipate a reduction in the lower bound $\underline{v}^r$.

The above cutting plane method is only a generic framework, in implementation one has to concretize the following 
 two key steps:
\begin{itemize}
    \item[(a)] Computation of a lower bound $\underline{v}^r$ such that $\underline{v}^r \leq \Phi^*(\mathcal{A})$.
    \item[(b)] Generation of a valid cut $c$ with respect to $\bar{x}$, $\mathcal{A}$, and $\nu_R$.
\end{itemize}

For clarity of exposition, we now discuss these two steps for the case where $\mathcal{A} = \mathcal{F}$.

\begin{itemize}
\item[(a)] \textbf{Lower bound.} There is a wide selection of computationally tractable relaxations of~QP. We refer the reader to~\cite{nohra2021} and the references therein for a review of possible relaxations. In our numerical implementation, we adopt the lower bound obtained from the doubly nonnegative (DNN) relaxation of~\eqref{eq:QP}, which can be represented in the following form:
\begin{equation}
    \label{eq:SDP-lb}
    \begin{aligned}
    \min_{\substack{X,x} } \quad & \left\langle \begin{pmatrix}
Q & d \\
d^\top & 0
\end{pmatrix}, \begin{pmatrix}
 X & x \\
 x^\top  &1
\end{pmatrix} \right \rangle   \\
        \textrm{\rm s.t.}  \quad & 
\begin{pmatrix}
-A & b \\
\bzero^\top & 1 \\
\end{pmatrix}  \begin{pmatrix}
 X & x \\
 x^\top  &1
\end{pmatrix}\begin{pmatrix}
-A & b \\
\bzero^\top & 1 \\
\end{pmatrix} ^\top \geq 0 \\  
\quad &  \begin{pmatrix}
 X & x \\
 x^\top  &1
\end{pmatrix}\succeq 0.
    \end{aligned} 
\end{equation}
The optimal value of~\eqref{eq:SDP-lb} is referred to as the \textit{DNN bound} of the QP problem~\eqref{eq:QP}.
\item [(b)]  \textbf{Valid cut.}  Let $\bar x\in \cF$  and  $\nu_{_R}\leq \Phi(\bar x)$.
To compute a {valid cut respect to $\bar x$, $\cF$ and $\nu_{_R}$}, one search for a vector $c\in\R^n$ such that the following condition holds:
\begin{equation}\label{eq:validcut}
\begin{aligned}
    \nu_{_R} \leq \min_{x\in \R^{n}} \quad &   x^\top Q x + 2 d^\top x   \\
        \textrm{\rm s.t.} \quad &  Ax\leq b\\
        & \displaystyle  c^\top(x-\bar x) \leq 1.
    \end{aligned}
\end{equation}

However, even when $c \in \mathbb{R}^n$ is given, verifying that~\eqref{eq:validcut} holds is computationally intractable, as it essentially requires solving another non-convex QP problem, let alone searching for a vector $c \in \mathbb{R}^n$ that satisfies~\eqref{eq:validcut} in the first place. This difficulty motivates our work, in which we propose a method to generate valid cuts for a class of points $\bar{x}$ by solving linear semidefinite programming (SDP) problems.

\end{itemize}

\subsection{Main Contributions}

The first contribution of this paper, stated in~\Cref{thm:STc}, shows that if $\bar{x}$ is a Karush-Kuhn-Tucker (KKT) point satisfying the second-order condition~\eqref{eq:QHpd}, and either $\bar{x}$ is nondegenerate (as defined in~\Cref{def:nondegenerate}) or $\nu_{R} < {\Phi}(\bar{x})$, then a valid cut $c \in \mathbb{R}^n$ satisfying~\eqref{eq:validcut} can always be obtained by solving the linear SDP~\eqref{eq:SDPsearchofc}.

The second contribution, presented in~\Cref{thm:gmcfinite}, establishes that a KKT point $\bar{x}$ that meets the second order condition~\eqref{eq:QHpd} can be reached by solving a finite number of convex QP problems.

Finally, we implement the generic cutting plane framework described in~\Cref{subsec:cutpf}, which utilizes the DNN relaxation~\eqref{eq:SDP-lb} (to compute lower bounds) and the convex SDP~\eqref{eq:SDPsearchofc} (for cut generation). The resulting algorithm, \texttt{DCQP} (\Cref{alg:CuP-global}), does not rely on the classical branch-and-bound (B\&B) framework. However, it demonstrates superior performance compared to the state-of-the-art commercial solver \texttt{Gurobi} and the recent academic nonconvex QP solver \texttt{QPL}~\cite{LiuzziLocatelliPiccialli2022}, as shown in~\Cref{sec:experiments}. Specifically, on a standard desktop, \texttt{DCQP} successfully reduced the relative gap to $10^{-4}$ for 138 out of 140 instances of dimension $n=100$ within one hour. In contrast, both \texttt{Gurobi} and \texttt{QPL} failed to achieve this level of optimality for more than half of the instances. Moreover, on some selected instances, neither \texttt{Gurobi} nor \texttt{QPL} could reach a relative gap of $10^{-4}$ within 48 hours.

\subsection{Related Work}

In the literature on nonconvex QP, cutting planes are commonly introduced to strengthen relaxations, typically with respect to the relaxed formulation (for instance,~\eqref{eq:SDP-lb} when employing the DNN relaxation). The derivation of such cutting planes often involves analyzing the convex hull of the set $$\left\{(x,X)\in \R^n\times \cS^n:Ax\leq b, \enspace X_{ij}=x_ix_j,\enspace \forall i,j\right\}$$ to obtain valid linear inequalities that are satisfied by the pair $(x,X)$. A classical example of these valid inequalities is based on McCormick's under- and over-estimators~\cite{mccormick1976computability}.   For special cases such as box-constrained QP problems (i.e., when the polytope $\cF$ is the unit box) and standard QP problems (i.e., when $\cF$ is the unit simplex), the convex hull exhibits additional structure, allowing for the derivation of more sophisticated inequalities, such as the triangle inequalities~\cite{VandenbusscheNemhauser2005,VandenbusscheNemhauser2005a,Burer2009a,BonamiOktay18,Bonami2019}. Given an objective cut-off, i.e., an upper bound on the optimal value obtained from the best known solution, it is common to employ the so-called 'optimality-based bound tightening' (OBBT) technique to  further strengthen the relaxation bound~\cite{Coffrin2015,Gleixner17,Puranik17,Sundar2023}.  Note that the OBBT technique introduces cutting planes that are in the specific form of box constraints.

Our approach to generating cutting planes differs fundamentally from the aforementioned methods in two respects. First, we do not impose any a priori structure on the form of the linear inequalities. Second, and more importantly, while traditional cutting planes are typically incorporated into a branch-and-bound (B\&B) framework to guarantee global convergence, our main theorem (\Cref{thm:STc}) implies that the feasible region can be repeatedly reduced before certifying global optimality. As a result, it is possible to solely rely on the cutting plane technique introduced in this paper, as confirmed by our experiments.

The works most closely related to ours are those of Tuy~\cite{tuy1964concave} and Konno~\cite{konno1976cutting}, both of which focus on concave QP problems where $Q$ is required to be negative semidefinite. More recently, Qu et al.~\cite{QuZengLou2025} established a connection between Konno’s relaxation bound and the DNN bound, leading to an efficient cutting-plane-based global solver for concave QP. However, Konno's cut cannot be directly extended to the more general indefinite case, and the cutting-plane technique in~\cite{QuZengLou2025}  relies on the generation of Konno's cut. As a result, these methods are not applicable to indefinite QP problems. Our work, therefore, is not a straightforward extension of previous approaches, but rather addresses the new challenges that arise in the indefinite setting.

\subsection{Organization}
The remainder of this paper is organized as follows. In \Cref{sec:pre}, we introduce the notations, define nondegenerate KKT point, and review the second-order optimality conditions for QP problems. \Cref{sec:validcut} presents the development of valid cuts based on SDP programming. In \Cref{sec:localsearch}, we propose a local search method with finite convergence guarantees. The main algorithm \texttt{DCQP} is described in detail in \Cref{sec:algorithm}. Numerical results are provided in \Cref{sec:experiments}, and we conclude in \Cref{sec:conclusion}. Short proofs are included within the main text, while longer proofs are deferred to the appendix.

\section{Preliminaries}\label{sec:pre}
\subsection{Notations}

Throughout the paper, we define $[n]:=\{1,\ldots,n\}$.
We use $\bzero$, $\bone$ and $\bI$ to denote the zero vector or matrix, the vector of all ones, and the identity matrix, respectively, each with appropriate dimensions.
For a vector $x\in\R^n$, the notation $x\geq 0$ indicates that $x$ is elementwise nonnegative, and
$x\geq y$ means that $x-y\geq 0$.

We denote by $\cS^n$ the set of $n$-by-$n$ symmetric matrices, equipped with the Frobenius inner product $\left\langle A, B \right\rangle := \mathrm{tr}(A^\top B)$ for all $A, B\in\cS^n$. For $X\in\cS^n$,  we write $X\geq 0$ to indicate elementwise nonnegativity, and $X\succeq 0$ (resp. $X\succ 0$) to indicate that $X$ is positive semidefinite (resp. positive definite).
For two matrices $X, Y\in \cS^n$, the relations $X \geq Y$ and $X \succeq Y$ means that $X - Y { \geq 0}$ and $X - Y {\succeq 0}$, respectively. 

  For any nonempty subset $I\subset [m]$,  $A_I$ denotes  the submatrix of $A$ consisting of all rows with indices in $I$ and $b_I$ denotes the subvector of $b$ containing all elements with indices in $I$. Let
\begin{align*}&{\cH}_I:=\left\{x \in \R^n:A_I x=0\right\},\enspace \hat{\cH}_I:=\left\{x \in \R^n:A_I x=b_I\right\},\enspace \tilde{\cH}_I:=\left\{x \in \R^n:A_I x\le 0\right\}.
\end{align*}
For ease of notation, we also let $\cH_{\emptyset}:=\R^n$, $\hat\cH_{\emptyset}:=\R^n$, and $\tilde{\cH}_{\emptyset}:=\R^n$.

Denote by $a_i\in \R^n$ the transpose of the $i$th row vector of $A$. For any $x\in \cF$, let $I_x$ be the index set of the active constraints at point $x$, i.e.:
$$I_{x}:=\{i\in [m]:a_i^\top {x}=b_i\}.$$ 
Then $\cH_{I_x}$ corresponds to the null space of the active constraints at point $x$. 
Given $\lambda\in \R_+^m$, denote by $J_{\lambda}$ the index of positive elements in $\lambda$:
$$
J_{\lambda}:=\{i\in [m]:\lambda_i>0\}.
$$
For a finite set $S$, $|S|$ denotes the cardinality of $S$.

For any cone $\cH \subseteq \mathbb{R}^n$, 
we say that $Q$ is \textit{$\cH$-conditionally positive semidefinite} (resp.\ \textit{$\cH$-conditionally positive definite}) if $p^\top Q p \geq 0$ (resp.\ $p^\top Q p > 0$) for all nonzero $p \in \cH$. We write $Q|_{\cH} \succeq 0$ (resp.\ $Q|_{\cH} \succ 0$) to indicate that $Q$ is $\cH$-conditionally positive semidefinite (resp.\ definite), and $Q|_{\cH} \nsucc 0$ to indicate that $Q$ is not $\cH$-conditionally positive definite. If $\cH = \{\mathbf{0}\}$, we conventionally set $Q|_{\cH} \succ 0$ to be true for any  $Q\in \cS^n$.


\subsection{Nondegenerate KKT Point}
Let $\bar x$ be a feasible point of~\eqref{eq:QP}. It is a KKT point of~\eqref{eq:QP} if there is $\bar \lambda \in \R^m$ such that
\begin{equation}\label{eq:KKT}
\left\{
\begin{aligned}
&\bar\lambda \geq 0 \\
&Q\bar x+d =-A^\top \bar \lambda\\ 
&(A\bar x-b)^\top \bar \lambda=0 .
\end{aligned}
\right.
\end{equation}

\begin{definition}\label{def:nondegenerate}
We say that $\bar x$ is a nondegenerate KKT point of~\eqref{eq:QP} if there exist $\bar \lambda\in\R^m$ such that~\eqref{eq:KKT} holds and
\begin{equation}\label{eq:AIJ}
\rank(A_{I_{\bar x}})=\rank(A_{J_{\bar \lambda}})=|J_{\bar\lambda}|.
\end{equation}
\end{definition}
\begin{remark}
The definition of nondegeneracy presented above is less stringent than the standard definition used in nonlinear programming. Specifically, it does not require the uniqueness of the multiplier $\bar \lambda$. This relaxation is somewhat not surprising due to the particular linear structure of the feasible set. Note that if the standard constraint qualification condition holds, then~\eqref{eq:AIJ} corresponds to the strict complementary slackness condition. 
\end{remark} 

\subsection{Second Order Optimality Conditions}

We review below the necessary and sufficient conditions for the local optimality of~\eqref{eq:QP}; see also~\cite{Cottle1992,Lee2005}.

\begin{definition}[Local and locally unique solution~{\cite{Mangasarian1980}}]
A feasible point $\bar{x} \in \mathcal{F}$ such that
$
\bar{x}^\top Q \bar{x} + 2 d^\top \bar{x} \leq x^\top Q x + 2 d^\top x
$
for all feasible $x$ in some open Euclidean ball around $\bar{x}$ is called a \textit{local solution} of~\eqref{eq:QP}. If equality holds only for $x = \bar{x}$, then $\bar{x}$ is said to be a \textit{locally unique} solution of~\eqref{eq:QP}.
\end{definition}

\begin{theorem}[{\cite{Mangasarian1980,LuisContesse1980}}]\label{thm:soocu}
A point $\bar{x} \in \mathbb{R}^n$ is a local (resp.\ locally unique) solution of~\eqref{eq:QP} if and only if:
\begin{itemize}
    \item[(a)] $\bar{x}$ is feasible and satisfies the KKT conditions~\eqref{eq:KKT} with some $\bar{\lambda} \in \mathbb{R}^m$, and
    \item[(b)] the matrix $Q$ is $\mathcal{H}_{J_{\bar{\lambda}}} \cap \tilde{\mathcal{H}}_{I_{\bar{x}}}$-conditionally positive semidefinite (resp.\ positive definite).
\end{itemize}
\end{theorem}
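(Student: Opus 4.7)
My plan is to exploit the purely quadratic structure of $\Phi$ and the linear form of the constraints so that the Taylor expansion of $\Phi$ at $\bar x$ is exact and the feasibility analysis reduces to checking sign patterns of $a_i^\top(x-\bar x)$.

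First, I would establish a single algebraic identity that drives both directions. Writing $h=x-\bar x$ and expanding,
\begin{equation*}
\Phi(x)-\Phi(\bar x)=2(Q\bar x+d)^\top h+h^\top Qh=-2\bar\lambda^\top Ah+h^\top Qh,
\end{equation*}
where the second equality uses the KKT stationarity. For any feasible $x$, complementary slackness kills the terms with $i\notin J_{\bar\lambda}$, and for $i\in J_{\bar\lambda}\subseteq I_{\bar x}$ one has $a_i^\top h=a_i^\top x-b_i\le 0$, so the linear part is nonnegative and
$
\Phi(x)-\Phi(\bar x)\ge h^\top Qh.
$
A parallel bookkeeping identifies the critical cone: for $p\in\tilde{\cH}_{I_{\bar x}}$, the directional derivative $(Q\bar x+d)^\top p=-\sum_{i\in J_{\bar\lambda}}\bar\lambda_i a_i^\top p$ is a sum of nonnegative terms and vanishes iff $a_i^\top p=0$ for every $i\in J_{\bar\lambda}$, i.e.\ iff $p\in\cH_{J_{\bar\lambda}}\cap\tilde{\cH}_{I_{\bar x}}$. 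This is the cone appearing in (b).

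For the ``if'' direction, I would argue by contradiction. Assume (a) and (b) with $Q|_{\cH_{J_{\bar\lambda}}\cap\tilde{\cH}_{I_{\bar x}}}\succ 0$ (the PSD case is analogous, giving just local optimality) and suppose there exist feasible $x_k\to\bar x$, $x_k\ne\bar x$, with $\Phi(x_k)\le\Phi(\bar x)$. Set $p_k:=h_k/\|h_k\|$ and pass to a subsequence with $p_k\to p$, $\|p\|=1$. Since $a_i^\top h_k\le 0$ for $i\in I_{\bar x}$, I get $p\in\tilde{\cH}_{I_{\bar x}}$ in the limit. The key step, and the place where I expect the real technical care, is showing $p\in\cH_{J_{\bar\lambda}}$: plugging the hypothesis $\Phi(x_k)\le\Phi(\bar x)$ into the identity yields
\begin{equation*}
0\ge -2\sum_{i\in J_{\bar\lambda}}\bar\lambda_i(a_i^\top h_k)+h_k^\top Qh_k,
\end{equation*}
and after dividing by $\|h_k\|$ (not $\|h_k\|^2$) and letting $k\to\infty$, the quadratic term vanishes while the linear term forces $\sum_{i\in J_{\bar\lambda}}\bar\lambda_i(a_i^\top p)\ge 0$; since each summand is nonpositive with $\bar\lambda_i>0$, each $a_i^\top p$ must be zero. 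Then dividing the same inequality by $\|h_k\|^2$ and passing to the limit yields $p^\top Qp\le 0$ with $p\ne 0$ in the critical cone, contradicting (b).

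For the ``only if'' direction, (a) follows from the standard first-order necessary conditions, which hold without any constraint qualification because the constraints are affine. For (b), pick any nonzero $p\in\cH_{J_{\bar\lambda}}\cap\tilde{\cH}_{I_{\bar x}}$ and consider the ray $x_t:=\bar x+tp$. Feasibility for small $t>0$ follows from $a_i^\top p\le 0$ on the active set and the strict slack on the inactive set. Local optimality then gives $0\le\Phi(x_t)-\Phi(\bar x)$, and by the identity together with $(Q\bar x+d)^\top p=0$ (since $p\in\cH_{J_{\bar\lambda}}$) this collapses to $0\le t^2 p^\top Qp$, yielding $p^\top Qp\ge 0$. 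For the locally unique case, any $p$ with $p^\top Qp=0$ would make $\Phi$ constant along the ray $x_t$, violating uniqueness, so the inequality must be strict. The main obstacle is the sequence-compactness step above; everything else is an application of the exact Taylor identity together with complementary slackness.
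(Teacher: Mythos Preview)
The paper does not supply its own proof of this theorem; it is quoted from Mangasarian (1980) and Contesse (1980) and stated without argument. So there is no paper proof to compare against, and I can only assess your proposal on its merits.

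Your treatment of necessity, and of sufficiency in the positive-definite (locally unique) case, is correct and standard. The gap is your parenthetical remark that ``the PSD case is analogous.'' It is not. In the semidefinite case your limiting argument produces a unit vector $p$ in the critical cone $K=\cH_{J_{\bar\lambda}}\cap\tilde{\cH}_{I_{\bar x}}$ with $p^\top Qp\le 0$, and the hypothesis $Q|_K\succeq 0$ only upgrades this to $p^\top Qp=0$. That is no contradiction: nothing so far rules out a sequence $x_k$ approaching $\bar x$ tangentially to a direction $p$ along which $\Phi$ is flat, while curving away from $K$ just enough to pick up strictly negative second-order contribution. Concretely, writing $h_k=\|h_k\|(p+r_k)$ with $r_k\to 0$, the inequality $\Phi(x_k)<\Phi(\bar x)$ becomes
\[
2(Q\bar x+d)^\top r_k+\|h_k\|\bigl(2p^\top Qr_k+r_k^\top Qr_k\bigr)<0,
\]
and you have no control over the sign of $p^\top Qr_k$ or over the rate at which the first term (nonnegative, but possibly $o(1)$) vanishes relative to $\|h_k\|$.

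Turning $Q|_K\succeq 0$ into genuine local optimality is exactly the content of Contesse's result and requires an extra argument exploiting the polyhedrality of $\cF$: typically either an induction on faces (pass to the face of $\cF$ containing infinitely many $x_k$ in its relative interior, where the active set is constant), or a decomposition of $\tilde{\cH}_{I_{\bar x}}$ into finitely many simplicial subcones on each of which the linear/quadratic trade-off can be bounded uniformly. Without some step of this kind, the sufficiency half of the ``local solution'' statement remains unproved.
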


 For $\bar{\lambda}$ satisfying~\eqref{eq:KKT}, we have
$
\mathcal{H}_{J_{\bar{\lambda}}} \supset \mathcal{H}_{I_{\bar{x}}}$ and $ \tilde{\mathcal{H}}_{I_{\bar{x}}} \supset \mathcal{H}_{I_{\bar{x}}}
$. Therefore, when $\mathcal{H}_{I_{\bar{x}}}\neq \emptyset$, a second-order necessary condition for $\bar{x}$ being a locally unique solution is:
\begin{equation}\label{eq:QHpd}
Q|_{\mathcal{H}_{I_{\bar{x}}}} \succ 0.
\end{equation}
By a slight abuse of language, in the remainder of the paper we refer to any KKT point $\bar{x}$ satisfying~\eqref{eq:QHpd} as a \textit{second-order KKT point}.

\begin{remark}
In general,~\eqref{eq:QHpd} is not a sufficient optimality condition. Here is an example. 
\begin{equation}
    \begin{aligned}
    \min_{x_1,x_2} \quad &   x^2_2+ x_1 x_2-x_2-\frac{1}{2}x_1+\frac{1}{4}  \\
        \textrm{s.t.} \quad &  
         x_1 \geq 0,\enspace x_2\geq 0,\enspace x_1+x_2 \leq 1.
    \end{aligned}
\end{equation}
Clearly $(x_1,x_2)=(0,1/2)$ is a KKT point with only one active constraint $\{x_1=0\}$. The quadratic term of the objective function $x_2^2+x_1x_2$ is clearly positive definite in the null space $\{(x_1,x_2): x_1=0\}$. However, $(x_1,x_2)=(0,1/2)$ is not a local minimum because $(x_1,x_2)=(\epsilon,(1-\epsilon)/2)$ has objective value $-{\epsilon^2}/{4}$.
\end{remark}
\begin{remark}\label{rem:KKT2ndlocal}
If $\bar x$ is a nondegenerate KKT point, then $\cH_{J_{\bar \lambda}}=\cH_{I_{\bar x}}$ and $\mathcal{H}_{J_{\bar{\lambda}}} \cap \tilde{\mathcal{H}}_{I_{\bar{x}}}=\cH_{I_{\bar x}}$. Hence, in this case,~\eqref{eq:QHpd} becomes sufficient and $\bar x$ is a locally unique solution. 
\end{remark}

\section{Construction of Valid Cut}\label{sec:validcut}
Throughout this section,  $\bar{x}$ is a given KKT point of~\eqref{eq:QP}, and  $\nu_R \leq \Phi(\bar{x})$ is a given reference value. We present a method to remove $\bar{x}$ from $\mathcal{F}$ while ensuring that the minimal value of $\Phi$ over the removed region is at least $\nu_R$. 
For brevity, we simply say that $c$ is a valid cut if it is valid with respect to $\bar{x}$, $\mathcal{F}$, and $\nu_R$, i.e., if~\eqref{eq:validcut} holds.

\subsection{Sufficient Conditions}
We start by making the following basic observation. 
\begin{lemma}\label{l:S1S2}
Let any $c\in \R^n$.
If there exist $ S\succeq 0$ and $T\geq 0$ such that
\begin{equation}\label{eq:SOS}
\begin{aligned}
\begin{pmatrix}
Q & d \\
d^\top & -\nu_{_R}
\end{pmatrix}&=S+\begin{pmatrix}
-A & b \\
\bzero^\top & 1 \\
\displaystyle -c^\top & 1+c^\top \bar x 
\end{pmatrix}^\top
T
\begin{pmatrix}
-A & b \\
\bzero^\top & 1 \\
\displaystyle -c^\top & 1+c^\top \bar x
\end{pmatrix},
\end{aligned}
\end{equation}
then $c$ is a valid cut.
\end{lemma}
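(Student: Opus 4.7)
The plan is to prove the lemma by the standard homogenization/SOS argument, exploiting the doubly nonnegative structure on the right-hand side. The key is that appending a trailing $1$ to $x$ turns $\Phi(x)-\nu_{_R}$ into a quadratic form in $\binom{x}{1}$, and the assumed decomposition then splits this form into two manifestly nonnegative pieces on the cut feasible region.

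First I would take any $x$ satisfying the two constraint groups $Ax\leq b$ and $c^\top(x-\bar x)\leq 1$, and introduce the vector
\[
v \;:=\; \begin{pmatrix} -A & b \\ \bzero^\top & 1 \\ -c^\top & 1+c^\top \bar x \end{pmatrix}\begin{pmatrix} x \\ 1 \end{pmatrix} \;=\; \begin{pmatrix} b-Ax \\ 1 \\ 1-c^\top(x-\bar x) \end{pmatrix}.
\]
By feasibility, every entry of $v$ is nonnegative. Separately, a direct computation gives the identity
\[
\begin{pmatrix} x \\ 1 \end{pmatrix}^\top \begin{pmatrix} Q & d \\ d^\top & -\nu_{_R} \end{pmatrix} \begin{pmatrix} x \\ 1 \end{pmatrix} \;=\; \Phi(x)-\nu_{_R}.
\]

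Next I would substitute the hypothesized decomposition~\eqref{eq:SOS} into this quadratic form. Sandwiching both sides of~\eqref{eq:SOS} by $\binom{x}{1}^\top$ and $\binom{x}{1}$ yields
\[
\Phi(x)-\nu_{_R} \;=\; \begin{pmatrix} x \\ 1 \end{pmatrix}^\top S \begin{pmatrix} x \\ 1 \end{pmatrix} \;+\; v^\top T v.
\]
The first term is nonnegative because $S\succeq 0$. For the second term, the essential observation, and arguably the only one that requires comment, is that $T\geq 0$ is an entrywise condition rather than a positive semidefiniteness condition; nonetheless, since $v\geq 0$, the scalar $v^\top T v=\sum_{i,j} T_{ij}v_iv_j$ is a sum of products of nonnegatives, hence nonnegative.

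Combining the two nonnegativity statements gives $\Phi(x)\geq \nu_{_R}$ for every $x\in\cF$ with $c^\top(x-\bar x)\leq 1$, which is precisely the definition of a valid cut in~\eqref{eq:validcut}. There is no genuine obstacle here; the entire content of the lemma lies in correctly identifying the right homogenization and in recognizing that entrywise nonnegativity of $T$, paired with nonnegativity of $v$, suffices to guarantee $v^\top T v\geq 0$ even without any positive semidefinite assumption on $T$. This is also what makes the construction natural for the DNN relaxation framework used later in the paper.
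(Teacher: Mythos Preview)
Your argument is correct and essentially identical to the paper's own proof: both sandwich~\eqref{eq:SOS} by $\binom{x}{1}$, identify the resulting vector $v=(b-Ax,\,1,\,1-c^\top(x-\bar x))$ as entrywise nonnegative on the cut region, and conclude $\Phi(x)-\nu_{_R}\geq 0$ from $S\succeq 0$ and $T\geq 0$. Your explicit remark that $v^\top T v=\sum_{i,j}T_{ij}v_iv_j\geq 0$ follows from entrywise (not semidefinite) nonnegativity of $T$ is the only elaboration beyond the paper's terser version.
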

\begin{proof}
If~\eqref{eq:SOS} holds, then for any $x\in \R^n$  we have
\begin{equation*}
\begin{aligned}
&x^\top Q x+2 d^\top x-\nu_{_R} 
=
\begin{pmatrix}
x^\top & 1
\end{pmatrix}S\begin{pmatrix}
x\\ 1
\end{pmatrix}+
\begin{pmatrix}
b-Ax \\
1\\
1+c^\top(\bar x-x)
\end{pmatrix}^\top
T
\begin{pmatrix}
b-Ax \\
1\\
1+c^\top(\bar x-x)
\end{pmatrix}
\end{aligned}
\end{equation*}
If $S\succeq 0$ and $T \geq 0$, then for any $x\in \R^n$ such that $Ax\leq b$ and $c^\top (x-\bar x)\leq 1$, the right-hand side of the above equation is nonnegative. So we have
$
x^\top Q x+2 d^\top x \geq \nu_{_R}
$ for any $x$ such that $Ax\leq b$ and $c^\top (x-\bar x)\leq 1$. Therefore,~\eqref{eq:validcut} holds. 
\end{proof}
\Cref{l:S1S2} provides a sufficient condition for a given vector  $c\in \R^n$ to qualify as a valid cut. This condition can be verified by solving a  conic linear program  when $c\in \R^n$ is given, as equation~\eqref{eq:SOS} is linear in terms of $T$ and $S$ for given $c$. If $c\in \R^n$ is not predetermined, then equation~\eqref{eq:SOS} becomes nonlinear and computationally challenging to solve. To address this issue, we propose a specific choice for the multiplier matrix $T$ to eliminate the nonlinearity.
\begin{lemma}\label{l:csdp}
Let any $c\in \R^n$. 
If there exist $ S\succeq 0$, $T_1\geq 0$ and $\beta\geq 0$ such that
\begin{equation}\label{eq:SOS-r}
\begin{aligned}
\begin{pmatrix}
Q & d \\
d^\top & -\nu_{_R}
\end{pmatrix}&=S+\begin{pmatrix}
-A & b \\
\bzero^\top & 1 \\
\end{pmatrix}^\top
T_1
\begin{pmatrix}
-A & b  \\
\bzero^\top & 1 \\
\end{pmatrix} \\ & \quad\quad+\frac{1}{2}\begin{pmatrix}
Q\bar x+d \\ 
-\bar x^\top Q \bar x- d^\top \bar x +\beta
\end{pmatrix} \begin{pmatrix}
-c^\top & 1+c^\top \bar x
\end{pmatrix} \\ &\quad\quad+ \frac{1}{2}\begin{pmatrix}
-c \\ 1+c^\top \bar x
\end{pmatrix} \begin{pmatrix}
(Q\bar x+d)^\top &
-\bar x^\top Q \bar x- d^\top \bar x +\beta
\end{pmatrix},
\end{aligned}
\end{equation}
then $c$ is a valid cut.
\end{lemma}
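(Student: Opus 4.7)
The plan is to reduce the claim to \Cref{l:S1S2} by constructing, from the given $S$, $T_1$, and $\beta$, a specific elementwise-nonnegative matrix $T$ so that $S$ and $T$ together satisfy identity~\eqref{eq:SOS}. Write the ``row'' matrix appearing in that identity as $M = \begin{pmatrix} M_1 \\ M_2 \end{pmatrix}$, where $M_1 := \begin{pmatrix} -A & b \\ \bzero^\top & 1 \end{pmatrix}$ collects the first $m+1$ rows and $M_2 := \begin{pmatrix} -c^\top & 1+c^\top \bar x \end{pmatrix}$ is the last row. Seeking $T$ in the block form
\[
T = \begin{pmatrix} T_1 & u \\ u^\top & 0 \end{pmatrix}, \qquad u \in \R^{m+1},
\]
one computes $M^\top T M = M_1^\top T_1 M_1 + M_1^\top u M_2 + M_2^\top u^\top M_1$. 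Matching this against the right-hand side of~\eqref{eq:SOS-r} (with the same $S$) reduces the task to producing an entrywise nonnegative $u$ satisfying
\[
M_1^\top u \,=\, \frac{1}{2}\begin{pmatrix} Q\bar x + d \\ -\bar x^\top Q \bar x - d^\top \bar x + \beta \end{pmatrix}.
\]

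The key step is to build such a $u$ from the KKT conditions~\eqref{eq:KKT}. Pick $\bar\lambda \geq 0$ with $Q\bar x + d = -A^\top \bar\lambda$ and $(A\bar x - b)^\top \bar\lambda = 0$. Complementarity gives $b^\top \bar\lambda = \bar x^\top A^\top \bar\lambda = -\bar x^\top (Q\bar x + d) = -\bar x^\top Q \bar x - d^\top \bar x$, hence
\[
\frac{1}{2}\begin{pmatrix} Q\bar x + d \\ -\bar x^\top Q \bar x - d^\top \bar x + \beta \end{pmatrix} \,=\, \frac{1}{2}\begin{pmatrix} -A^\top \bar\lambda \\ b^\top \bar\lambda + \beta \end{pmatrix} \,=\, M_1^\top \cdot \frac{1}{2}\begin{pmatrix} \bar\lambda \\ \beta \end{pmatrix}.
\]
The choice $u := \frac{1}{2}\begin{pmatrix} \bar\lambda \\ \beta \end{pmatrix}$ therefore works, and $u \geq 0$ because $\bar\lambda \geq 0$ and $\beta \geq 0$.

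With this $u$, every block of $T$ is entrywise nonnegative, $S \succeq 0$ by hypothesis, and identity~\eqref{eq:SOS} holds by direct substitution. \Cref{l:S1S2} then certifies that $c$ is a valid cut. The only nontrivial obstacle is the KKT-based reduction that rewrites the second coordinate of the target vector as $b^\top \bar\lambda + \beta$, which is exactly what makes the whole right-hand vector lie in the image of $M_1^\top$ applied to a nonnegative vector; once this is in hand, the remainder is block-matrix bookkeeping.
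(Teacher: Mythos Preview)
Your proof is correct and follows essentially the same approach as the paper: both construct the block matrix $T=\begin{pmatrix} T_1 & u \\ u^\top & 0 \end{pmatrix}$ with $u=\tfrac{1}{2}(\bar\lambda^\top,\beta)^\top$, use the KKT conditions (stationarity plus complementarity) to verify that $M_1^\top u$ equals the required vector, and then invoke \Cref{l:S1S2}. Your exposition of the complementarity step is slightly more explicit than the paper's, but the argument is the same.
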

\begin{proof}
Since $\bar x$ is a KKT point, there exist $\bar \lambda \geq 0$ such that~\eqref{eq:KKT} holds, which implies:
$$
\begin{pmatrix}
Q\bar x+d \\ 
-\bar x^\top Q \bar x- d^\top \bar x +\beta
\end{pmatrix}=\begin{pmatrix}
-A & b \\
\bzero^\top & 1 \\
\end{pmatrix}^\top \begin{pmatrix}
\bar \lambda \\ \beta
\end{pmatrix}.
$$
Hence~\eqref{eq:SOS-r} is the same as~\eqref{eq:SOS} with
$$
  T = 
  \left( \begin{array}{ *{3}{c} }
    &  &  \frac{1}{2}\bar \lambda \\
    \multicolumn{2}{c}
{\raisebox{\dimexpr\normalbaselineskip+.7\ht\strutbox-1.1\height}[0pt][0pt]
        {\scalebox{1.2}{$T_1$}}} & \frac{1}{2} \beta \\
    ~~\frac{1}{2}\bar \lambda^\top~~  & \frac{1}{2}{\beta}~ & 0
  \end{array} \right).
$$

\end{proof}

\subsection{Computationally Tractable Construction of Valid Cuts}\label{subsec:ctc}
 In view of~\Cref{l:csdp}, we propose to construct a valid cut by solving the following conic linear program for certain prefixed $\beta\geq 0$. 
\begin{equation}
    \label{eq:SDPsearchofc}
    \begin{aligned}
    \min_{S, T, c} \quad &  c^\top (\bar z-\bar x)  \\
        \textrm{\rm s.t.~~~}  & \begin{pmatrix}
Q & d \\
d^\top & -\nu_{_R}
\end{pmatrix}=S+\begin{pmatrix}
-A & b \\
\bzero^\top & 1 \\
\end{pmatrix}^\top
T
\begin{pmatrix}
-A & b  \\
\bzero^\top & 1 \\
\end{pmatrix} \\& \qquad \qquad\qquad\enspace+\frac{1}{2}\begin{pmatrix}
Q\bar x+d \\ 
-\bar x^\top Q \bar x- d^\top \bar x +\beta
\end{pmatrix} \begin{pmatrix}
-c^\top & 1+c^\top \bar x
\end{pmatrix} \\ &\qquad\qquad \qquad \enspace + \frac{1}{2}\begin{pmatrix}
-c \\ 1+c^\top \bar x
\end{pmatrix} \begin{pmatrix}
(Q\bar x+d)^\top &
-\bar x^\top Q \bar x- d^\top \bar x +\beta
\end{pmatrix}
\\
& ~T\geq 0,\enspace S\succeq 0.
    \end{aligned}
\end{equation}
Here, $\bar z\in \R^n$ is a given vector. 
By~\Cref{l:csdp}, any feasible solution of~\eqref{eq:SDPsearchofc} defines a valid cut $c$. Moreover, the objective function $c^\top (\bar z-\bar x)$  has the following interpretation:  the smaller $c^\top (\bar z-\bar x)$ is, the more likely the cut $\{x\in \R^n: c^\top (x-\bar x)\geq 1\}$ will also exclude the point $\bar z$. This suggests choosing $\bar z$ as a feasible point that one particularly expects to eliminate. 

Suppose that $(X^*, x^*)$ is an optimal solution of~\eqref{eq:SDP-lb}.
Then $x^* \in \cF$, and we propose to set $\bar z = x^*$ in~\eqref{eq:SDPsearchofc}. As explained, choosing $\bar z = x^*$ in~\eqref{eq:SDPsearchofc} will generate a cut that is more likely to eliminate $x^*$ from the feasible set, which can help to further improve the relaxation bound.

We now justify the specific choice of the multiplier matrix by demonstrating the existence of a vector $c\in \R^n$ that satisfies all the constraints in~\eqref{eq:SDPsearchofc} under mild conditions. This constitutes one of the main results of this paper. The detailed proof is deferred to~\Cref{sec-appendix:proofofthmSTc} due to its length.
\begin{theorem}\label{thm:STc}
 Let $\bar x$ be a KKT point of~\eqref{eq:QP} such that $Q|_{\cH_{I_{\bar x}}}\succ 0$. Let $\nu_{_R}\leq \Phi(\bar x)$ and $ \beta \in [0, \Phi(\bar x)-\nu_{_R}] $.  Then the following assertions hold:
 \begin{itemize}
 \item [(a)]
If $\bar x$ is nondegenerate or $ \beta>0$, then~\eqref{eq:SDPsearchofc} is feasible. 
\item [(b)] If $ 0<\beta< \Phi(\bar x)-\nu_{_R}$, then~\eqref{eq:SDPsearchofc} is strictly feasible.
\end{itemize}
\end{theorem}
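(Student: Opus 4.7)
The plan is to prove \Cref{thm:STc} by explicitly constructing feasible (and strictly feasible) points of \eqref{eq:SDPsearchofc}. The construction rests on three ingredients: (i) the KKT identity $u = B^\top (\bar\lambda,\beta)^\top =: B^\top \bar\mu$ with $\bar\mu \geq 0$, already exploited in the proof of \Cref{l:csdp}; (ii) the parameterization $c = A^\top \eta$ for some $\eta \in \R^m$, which is possible since $\rank(A)=n$; and (iii) Finsler's lemma applied to the hypothesis $Q|_{\cH_{I_{\bar x}}}\succ 0$, yielding $\rho>0$ with $Q + \rho A_{I_{\bar x}}^\top A_{I_{\bar x}}\succ 0$. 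Setting $\tilde v := (\eta,\, 1+(A\bar x - b)^\top \eta)^\top$, the identity $B^\top \tilde v = v$ lets me rewrite the rank-two perturbation as $\tfrac12 B^\top(\bar\mu \tilde v^\top + \tilde v \bar\mu^\top) B$. The matrix equation in \eqref{eq:SDPsearchofc} then takes the cleaner form $M = S + B^\top T'' B$ with $T'' := T + \tfrac12(\bar\mu\tilde v^\top + \tilde v\bar\mu^\top)$, subject to $S \succeq 0$ and the elementwise lower bound $T''_{ij} \geq \tfrac12(\bar\mu_i\tilde v_j + \tilde v_i\bar\mu_j)$.

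Under nondegeneracy, $\cH_{I_{\bar x}} = \cH_{J_{\bar\lambda}}$ and Finsler gives $W := Q + \rho A_{J_{\bar\lambda}}^\top A_{J_{\bar\lambda}}\succ 0$. I would then take $\eta_i := -\rho/\bar\lambda_i$ for $i \in J_{\bar\lambda}$ (zero otherwise), $T''_{ii} := -\rho$ for $i\in J_{\bar\lambda}$ with the remaining entries of $T''_{1:m,1:m}$ set to zero, $T''_{1:m,m+1} := \bar\lambda$, and $T''_{m+1,m+1} := \beta$. A direct block computation, combined with the KKT identity $Q\bar x + d = -A^\top\bar\lambda$ and complementary slackness $\bar\lambda^\top A \bar x = \bar\lambda^\top b$, then yields $(M - B^\top T'' B)_{1:n,1:n} = W \succ 0$ and $(M - B^\top T'' B)_{1:n,n+1} = -W\bar x$, so the Schur complement of the full $(n+1)\times(n+1)$ block reduces to $\Phi(\bar x) - \nu_R - \beta \geq 0$---exactly our hypothesis. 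The elementwise constraint $T\geq 0$ follows from $\bar\lambda\geq 0$, $\beta\geq 0$, and the AM--GM estimate $\bar\lambda_i/\bar\lambda_j + \bar\lambda_j/\bar\lambda_i \geq 2$ for the off-diagonal $J_{\bar\lambda}\times J_{\bar\lambda}$ entries. Strict feasibility (part (b)) is then obtained by perturbing the construction ($-\rho \to -\rho+\epsilon$, $\bar\lambda \to \bar\lambda+\epsilon\bone$, $\beta \to \beta+\epsilon$) for small $\epsilon>0$; the strict slack $\beta < \Phi(\bar x) - \nu_R$ is precisely what preserves $S \succ 0$ after perturbation.

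The main obstacle is the degenerate case (part (a) when $\bar x$ is degenerate but $\beta > 0$), in which $\cH_{J_{\bar\lambda}} \supsetneq \cH_{I_{\bar x}}$ so that Finsler applied only to $A_{J_{\bar\lambda}}$ is insufficient, yet the elementwise bound forces $T''_{ii}\geq 0$ for $i\in I_{\bar x}\setminus J_{\bar\lambda}$, blocking the clean $-\rho$ choice on those indices. The construction must then be modified by trading the missing diagonal contribution against off-diagonal cross-entries $T''_{ij}$ for $i\in I_{\bar x}\setminus J_{\bar\lambda}$, $j\in J_{\bar\lambda}$ (whose elementwise lower bound $\tfrac12\eta_i\bar\lambda_j$ can be driven negative by choosing $\eta_i < 0$), combined with the $\beta$-dependence of the bound on $T''_{1:m,m+1}$. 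Balancing these contributions so that the upper-left block remains PSD, the Schur complement stays nonnegative, and every elementwise bound is honored is precisely where the $\beta > 0$ slack becomes essential---it provides the extra degree of freedom in the rank-two decomposition that compensates for the rank deficit of $A_{J_{\bar\lambda}}$ relative to $A_{I_{\bar x}}$, a compensation unavailable when both $\beta = 0$ and $\bar x$ is degenerate, which is why the theorem explicitly excludes that combined case.
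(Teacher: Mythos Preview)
Your construction for the nondegenerate case is correct and elegant---the Finsler-based choice $\eta_i=-\rho/\bar\lambda_i$ on $J_{\bar\lambda}$ with diagonal $T''$ does produce $S$ with top-left block $W=Q+\rho A_{J_{\bar\lambda}}^\top A_{J_{\bar\lambda}}\succ0$ and Schur complement $\Phi(\bar x)-\nu_R-\beta\ge0$, and the elementwise bounds check out. This is a genuinely different route from the paper, which instead performs a change of coordinates $y=b_{1:n}-A_{1:n}x$, extracts a Schur-complement block $D$ of the transformed Hessian, and builds the multipliers from the duals of auxiliary linear programs (the $\mu_i^*$ of \eqref{eq:mustar}). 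Your approach is more direct when it applies.

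The gap is the degenerate case of part~(a) (and hence also of part~(b)), which you only sketch. The difficulty is sharper than you indicate. In your reformulation $M=S+B^\top T''B$, the top-left block is $S_{1:n,1:n}=Q-A^\top T''_{1:m,1:m}A$, and the last row/column of $T''$---where the $\beta$-dependence lives---does not enter this block at all. So the slack $\beta>0$ cannot help you make $S_{1:n,1:n}\succeq0$ through the mechanism you describe. Moreover, for $p\in\cH_{J_{\bar\lambda}}$ one has $(Q\bar x+d)^\top p=0$, so the rank-two term vanishes and $p^\top S_{1:n,1:n}p=p^\top Qp-(Ap)^\top T_{1:m,1:m}(Ap)$ with $T\ge0$ elementwise; controlling this simultaneously for all $p$ (including directions in $\cH_{J_{\bar\lambda}}\setminus\cH_{I_{\bar x}}$ where $p^\top Qp$ may be negative) is not achieved by ``trading'' indefinite cross-terms $a_ia_j^\top+a_ja_i^\top$. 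The paper's mechanism is different: after the coordinate change, $\beta$ appears in the \emph{objective} of the LP \eqref{eq:mustar}, and boundedness of $\cF$ forces $\mu_i^*>0$ whenever $\beta>0$; LP duality then supplies the required nonnegative multipliers regardless of degeneracy. That LP-duality step is the missing idea.
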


\begin{remark}
We mentioned in~\Cref{rem:KKT2ndlocal} that if $\bar x$ is a nondegenerate KKT point  satisfying $Q|_{\cH_{I_{\bar x}}}\succ 0$, then $\bar x$ is a locally unique optimal point. 
\Cref{thm:STc} asserts  further that  nondegenerate KKT point $\bar x$  satisfying $Q|_{\cH_{I_{\bar x}}}\succ 0$  are in principle ``easy" to remove. The remaining case of degenerate KKT point essentially reduces to the  \textit{copositive programming} problem, as illustrated by the following simple example. 
\begin{equation}\label{eq:degenerate}
\begin{aligned}
 \min_{x\in \R^{n}} \quad &   x^\top Q x    \\
        \textrm{\rm s.t.} \quad &  x\geq 0,\enspace \bone^\top x \le 1 .
    \end{aligned}
\end{equation}
Here $\bar x=\bzero$ is a degenerate KKT point, because
$
\rank (A_{I_{\bar x}})=n,
$
but  $\bar \lambda=\bzero$  is the only multiplier satisfying~\eqref{eq:KKT}. The search of valid cut for~\eqref{eq:degenerate} with respect to $\bar x=\bzero$ and $\nu_{_R}=0$ is equivalent to certifying the copositiveness of the matrix $Q$, which is known to be an NP-hard problem.
\end{remark}

\subsection{Feasibility of the Dual Program}

The Lagrange dual program of~\eqref{eq:SDPsearchofc} can be written as follows:
\begin{equation}
    \label{eq:SDPsearchofcdual}
    \begin{aligned}
    \max_{\substack{U}  } \quad & ~~\left\langle -\begin{pmatrix}
Q & d \\
d^\top & -\nu_{_R}
\end{pmatrix}, U \right \rangle  + \begin{pmatrix}
\bzero \\ 1
\end{pmatrix} ^\top U \begin{pmatrix}
Q\bar x+d \\
-\bar x^\top Q \bar x- d^\top \bar x +\beta
\end{pmatrix}  \\
        \textrm{\rm s.t.~~~}   & \quad 
\bar z-\bar x- 
\begin{pmatrix}
\bI & -\bar x
\end{pmatrix} U 
\begin{pmatrix}
Q\bar x+d  \\
-\bar x^\top Q \bar x- d^\top \bar x +\beta
\end{pmatrix} =0 \\ & \quad
\begin{pmatrix}
-A & b \\
\bzero^\top & 1 \\
\end{pmatrix}  U \begin{pmatrix}
-A & b \\
\bzero^\top & 1 \\
\end{pmatrix} ^\top \geq 0 \\ & \quad
U\succeq 0.
    \end{aligned}
\end{equation}

\begin{lemma}
Suppose that $\beta>0$.
If $\bar x$ is a KKT point of~\eqref{eq:QP}  and $\bar z\in \cF$, then~\eqref{eq:SDPsearchofcdual} is feasible. 
\end{lemma}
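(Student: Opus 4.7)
The plan is to exhibit an explicit rank-one feasible point of~\eqref{eq:SDPsearchofcdual}. The natural family of matrices satisfying both $U \succeq 0$ and the second (elementwise nonnegativity) constraint is
\[
U(y,t) := t\begin{pmatrix} y \\ 1 \end{pmatrix}\begin{pmatrix} y \\ 1 \end{pmatrix}^\top,\qquad y \in \cF,\ t>0,
\]
because the second constraint evaluates to $t\begin{pmatrix} b-Ay \\ 1 \end{pmatrix}\begin{pmatrix} b-Ay \\ 1 \end{pmatrix}^\top$, which is elementwise nonnegative whenever $y \in \cF$. The idea is to choose $y=\bar z$ (so that the direction $(y-\bar x)$ in the equality constraint already points from $\bar x$ to $\bar z$) and tune the scalar $t>0$ to meet the last equality.

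The next step is to compute the left-hand side of the equality constraint at $U(\bar z,t)$. A direct expansion gives
\[
\begin{pmatrix} \bI & -\bar x \end{pmatrix} U(\bar z,t) \begin{pmatrix} Q\bar x + d \\ -\bar x^\top Q\bar x - d^\top\bar x + \beta \end{pmatrix}
= t\,(\bar z - \bar x)\,\gamma(\bar z),
\]
where $\gamma(\bar z) := \bar z^\top(Q\bar x+d) - \bar x^\top Q\bar x - d^\top\bar x + \beta$. Here the KKT conditions~\eqref{eq:KKT} come in: writing $Q\bar x + d = -A^\top\bar\lambda$ and using complementary slackness $\bar\lambda^\top(A\bar x-b)=0$, one obtains the clean expression
\[
\gamma(\bar z) = \bar\lambda^\top(b-A\bar z) + \beta.
\]
Since $\bar z \in \cF$ and $\bar\lambda \geq 0$ the first term is nonnegative, and since $\beta>0$ by assumption we get $\gamma(\bar z) \geq \beta > 0$. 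In particular $\gamma(\bar z)\neq 0$, so setting $t := 1/\gamma(\bar z) > 0$ makes the equality constraint reduce exactly to $\bar z - \bar x = \bar z - \bar x$.

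Thus $U^* := U(\bar z, 1/\gamma(\bar z))$ is a feasible point of~\eqref{eq:SDPsearchofcdual}. The only step that requires genuine work is the simplification of $\gamma(\bar z)$; everything else is algebraic verification. The main place where one must be careful is precisely this simplification, since it is what makes the positivity of $\gamma(\bar z)$ visible and hence the scaling $t=1/\gamma(\bar z)$ well-defined; this is exactly where the KKT hypothesis on $\bar x$ together with the assumption $\beta>0$ are used in an essential way. Note in passing that if $\beta=0$ the argument breaks down only when $\bar\lambda^\top(b-A\bar z)=0$ as well, which would require $\bar z$ to lie on every active facet weighted by $\bar\lambda$; this is consistent with the role of $\beta>0$ in~\Cref{thm:STc}(b).
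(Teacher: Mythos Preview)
Your proof is correct and follows essentially the same approach as the paper: both use the rank-one ansatz $U=t\begin{pmatrix}\bar z\\1\end{pmatrix}\begin{pmatrix}\bar z\\1\end{pmatrix}^\top$ and tune $t=1/\gamma(\bar z)$. The only cosmetic difference is that the paper writes $\gamma(\bar z)=(Q\bar x+d)^\top(\bar z-\bar x)+\beta$ and invokes first-order stationarity directly, whereas you carry the simplification one step further to $\bar\lambda^\top(b-A\bar z)+\beta$; the two expressions are identical and both yield $\gamma(\bar z)\geq\beta>0$.
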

\begin{proof}Let $\alpha>0$ and
$$
U= \alpha\begin{pmatrix}
\bar z \\ 1 
\end{pmatrix}
\begin{pmatrix}
\bar z \\ 1 
\end{pmatrix}^\top \succeq 0. 
$$
Since $\bar z\in \cF$,   $U$ satisfies the second nonnegativity constraints in~\eqref{eq:SDPsearchofcdual}. 
Note that 
$$
\begin{pmatrix}
\bI & -\bar x
\end{pmatrix} U 
\begin{pmatrix}
Q\bar x+d  \\
-\bar x^\top Q \bar x- d^\top \bar x +\beta
\end{pmatrix} \\
 =  \alpha \left[(Q\bar x+d)^\top (\bar z-\bar x)+\beta\right](\bar z-\bar x).
$$
Since $\bar x$ is a KKT point and $\bar z\in \cF$, we know that $(Q\bar x+d)^\top (\bar z-\bar x) \geq 0$. Hence if
$$
\alpha=((Q\bar x+d)^\top (\bar z-\bar x)+\beta)^{-1},
$$
then $U$ is a feasible solution of~\eqref{eq:SDPsearchofcdual}.

\end{proof}
It is also possible to make~\eqref{eq:SDPsearchofcdual} strictly feasible for some specific choices of $\bar z$, as shown in the following lemma. We denote by $T_{x}(\cF)$ the tangent cone of $\cF$ at $x\in \cF$.
\begin{lemma}
If $\cF$ has nonempty interior, there exists $\bar z$ such that 
$
\bar z-\bar x \in T_{\bar x}(\cF)
$
and~\eqref{eq:SDPsearchofcdual} is strictly feasible. 
\end{lemma}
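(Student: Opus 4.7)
The plan is to adapt the construction from the preceding lemma, where $U$ was taken as the rank-one matrix $\alpha \binom{\bar z}{1}\binom{\bar z}{1}^\top$, and to replace it with a positive combination of $n+1$ rank-one blocks built from \emph{interior} points of $\cF$. This enlarged $U$ will automatically be positive definite and will make the entrywise inequality constraint strict, while leaving enough freedom in the equality constraint to define $\bar z$ with $\bar z - \bar x \in T_{\bar x}(\cF)$. Throughout I assume (as in the previous lemma) that $\beta>0$.

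Concretely, first I would use $\mathrm{int}(\cF)\neq \emptyset$ to pick $n+1$ affinely independent points $z_1,\dots,z_{n+1}\in \mathrm{int}(\cF)$, so that the vectors $\binom{z_i}{1}$ form a basis of $\R^{n+1}$. For any positive scalars $\alpha_1,\dots,\alpha_{n+1}>0$, set
\begin{equation*}
U := \sum_{i=1}^{n+1}\alpha_i \binom{z_i}{1}\binom{z_i}{1}^\top.
\end{equation*}
By construction $U\succ 0$. Moreover,
\begin{equation*}
\begin{pmatrix}-A & b\\ \bzero^\top & 1\end{pmatrix} U \begin{pmatrix}-A & b\\ \bzero^\top & 1\end{pmatrix}^\top
=\sum_{i=1}^{n+1}\alpha_i \binom{b-Az_i}{1}\binom{b-Az_i}{1}^\top,
\end{equation*}
and since $z_i\in\mathrm{int}(\cF)$ gives $b-Az_i>0$ componentwise, the right-hand side is entrywise strictly positive. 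Hence the two conic constraints in~\eqref{eq:SDPsearchofcdual} hold strictly.

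It remains to pick $\bar z$ so that the equality constraint is satisfied. Repeating the expansion used in the proof of the previous lemma, the equality constraint becomes
\begin{equation*}
\bar z - \bar x = \sum_{i=1}^{n+1}\alpha_i\bigl[(z_i-\bar x)^\top(Q\bar x + d)+\beta\bigr](z_i-\bar x),
\end{equation*}
so I would simply \emph{define} $\bar z$ by this formula. To verify $\bar z-\bar x\in T_{\bar x}(\cF)$, note that since $\cF$ is a convex polytope and $z_i,\bar x\in \cF$, each direction $z_i-\bar x$ lies in the convex cone $T_{\bar x}(\cF)$. The KKT condition at $\bar x$ gives $(z_i-\bar x)^\top (Q\bar x+d)\geq 0$ for every $z_i\in \cF$, and combined with $\beta>0$ this makes every coefficient $\alpha_i\bigl[(z_i-\bar x)^\top(Q\bar x+d)+\beta\bigr]$ strictly positive. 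Thus $\bar z-\bar x$ is a nonnegative combination of tangent directions, hence belongs to $T_{\bar x}(\cF)$.

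The construction is essentially mechanical once the interior points are chosen; the only subtle step is invoking the KKT inequality $(z_i-\bar x)^\top(Q\bar x+d)\geq 0$ to guarantee that the coefficients defining $\bar z$ are nonnegative (so that the resulting direction stays in $T_{\bar x}(\cF)$). I do not anticipate any genuine obstacle here—the positive mass contributed by $\beta>0$ is precisely what converts the rank-one feasible point of the previous lemma into a strictly feasible one.
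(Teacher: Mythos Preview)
Your proposal is correct and follows essentially the same construction as the paper's own proof: build $U$ as a positive combination of $n+1$ rank-one blocks $\binom{z_i}{1}\binom{z_i}{1}^\top$, define $\bar z$ via the equality constraint, and use the KKT inequality $(z_i-\bar x)^\top(Q\bar x+d)\ge 0$ together with $\beta\ge 0$ to conclude $\bar z-\bar x\in T_{\bar x}(\cF)$. Your version is in fact slightly more explicit than the paper's, since you pick the $z_i$ in $\mathrm{int}(\cF)$ and affinely independent, which makes both $U\succ 0$ and the strict entrywise positivity of the transformed matrix immediate; the paper states these conclusions but does not spell out the choice of points. One minor remark: the assumption $\beta>0$ is not actually needed for the tangent-cone membership, since $T_{\bar x}(\cF)$ is closed and nonnegative coefficients already suffice.
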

\begin{proof}
If $\cF$ has nonempty interior, then there exits $x^{1},\ldots,x^{n+1}\in \cF$ and $\alpha_1,\ldots,\alpha_{n+1}\geq 0$ such that 
$$
U=\sum_{i=1}^{n+1} \alpha_ i\begin{pmatrix}
x^{i} \\ 1 
\end{pmatrix}
\begin{pmatrix}
x^{i} \\ 1 
\end{pmatrix}^\top  \succ 0
$$
and $U$ satisfies the  nonnegativity constraints in~\eqref{eq:SDPsearchofcdual}.
Hence~\eqref{eq:SDPsearchofcdual} is strictly feasible for 
$$
\begin{aligned}
\bar z & =\bar x +
\begin{pmatrix}
\bI & -\bar x
\end{pmatrix} U 
\begin{pmatrix}
Q\bar x+d  \\
-\bar x^\top Q \bar x- d^\top \bar x +\beta
\end{pmatrix} \\
 &=\bar x + \sum_{i=1}^{n+1} \alpha_i \left[(Q\bar x+d)^\top (x^i-\bar x)+\beta\right](x^i-\bar x).
\end{aligned}
$$
Then we notice that $x^i-\bar x\in T_{\bar x} (\cF)$ and $(Q\bar x+d)^\top (x^i-\bar x)+\beta \geq 0$ for each $i\in [n+1]$.
\end{proof}

\section{A Local Search Method with Finite Convergence}\label{sec:localsearch}

Motivated by~\Cref{thm:STc}, we provide in this section a method to find a KKT point $\bar x$ satisfying $Q|_{\cH_{I_{\bar x}}}\succ 0$.

\subsection{Generalized Mountain Climbing Method}
We first write $\Phi$ as a difference of convex (DC) functions. 
Let $M\succeq 0$ and $ N\succeq 0$ such that $Q=M-N$. Given such $M$ and $N$, 
define:
\begin{align}\label{eq: DC Psi function}
\Psi(x,\tilde{x}):=\frac{1}{2}x^\top M x+\frac{1}{2}\tilde{x}^\top M\tilde{x}+d^\top x+d^\top \tilde{x}-x^\top N\tilde{x}.
\end{align}

 \begin{lemma}\label{l:biKKT}
 A vector $\bar x\in \cF$ is a KKT point of~\eqref{eq:QP} if and only if
\begin{align}\label{a:neoptc}
\bar x\in \argmin_{x\in \cF} \Psi(x,\bar x).
\end{align}
\end{lemma}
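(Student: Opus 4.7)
The plan is to exploit the simple observation that, for fixed second argument $\tilde x = \bar x$, the function $x \mapsto \Psi(x,\bar x)$ is a \emph{convex} quadratic (because $M \succeq 0$), and $\cF$ is a polyhedron, so the KKT conditions for $\min_{x\in \cF}\Psi(x,\bar x)$ are both necessary and sufficient for global optimality. The entire argument then reduces to matching these KKT conditions to the KKT conditions \eqref{eq:KKT} of~\eqref{eq:QP}.

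First, I compute
\[
\nabla_x \Psi(x,\bar x) = M x + d - N \bar x.
\]
Evaluating at $x = \bar x$ and using $Q = M - N$ gives
\[
\nabla_x \Psi(\bar x,\bar x) = M\bar x - N\bar x + d = Q\bar x + d,
\]
which is precisely the gradient of $\tfrac12 \Phi$ at $\bar x$. This is the key algebraic identity on which everything hinges.

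Next, since $\Psi(\cdot,\bar x)$ is convex with affine inequality constraints $Ax \le b$ defining $\cF$, linearity of the constraints supplies a constraint qualification, so $\bar x \in \argmin_{x\in \cF}\Psi(x,\bar x)$ if and only if there exists $\bar\lambda \in \R^m$ such that
\[
\bar\lambda \geq 0,\quad \nabla_x \Psi(\bar x,\bar x) + A^\top \bar\lambda = 0,\quad \bar\lambda^\top (A\bar x - b) = 0,\quad A\bar x \leq b.
\]
Substituting the identity above, the stationarity condition becomes $Q\bar x + d = -A^\top \bar\lambda$, and the other conditions are identical, so the whole system coincides with \eqref{eq:KKT}. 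This yields the equivalence in both directions at once.

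I do not anticipate any real obstacle: the only thing to be careful about is to state that $\Psi(\cdot,\bar x)$ is convex (which is why the KKT conditions are sufficient, not merely necessary) and to remark that the polyhedral feasible set provides a constraint qualification automatically, so no regularity assumption on $\bar x$ is needed. The proof should therefore be short — essentially a one-line gradient computation followed by writing down the KKT conditions of the convex subproblem and comparing.
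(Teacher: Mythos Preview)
Your proposal is correct and follows essentially the same approach as the paper: both observe that $\Psi(\cdot,\bar x)$ is a convex QP over the polyhedron $\cF$, write down its KKT conditions, and note that the stationarity condition $M\bar x - N\bar x + d = -A^\top\bar\lambda$ coincides with \eqref{eq:KKT} since $Q=M-N$. The only cosmetic difference is that the paper displays the convex subproblem explicitly before writing its KKT system, whereas you compute the gradient directly.
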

\begin{proof}
The program $\min_{x\in \cF} \Psi(x,\bar x)$ corresponds to the following QP: 
\begin{equation}
    \label{eq:psiqp}
    \begin{aligned}
    \min_{x\in \R^{n}} \quad &   \frac{1}{2}x^\top M x +  d^\top x -x^\top N \bar x  \\
        \textrm{s.t.} \quad &  Ax\leq b.
    \end{aligned}
\end{equation}
Since $M\succeq 0$,~\eqref{eq:psiqp} is a convex QP problem. Hence~\eqref{a:neoptc} holds if and only if there exists $\bar \lambda$ such that 
\begin{equation}
    \left\{\begin{aligned}
    & \bar \lambda \geq 0\\
&(M-N)\Bar{x}+d=-A^\top  \bar \lambda\\
 &   (A\Bar{x}-b)^\top \bar \lambda=0,
\end{aligned}\right.
\end{equation}
which is exactly~\eqref{eq:KKT} because $Q=M-N$.
\end{proof}

\begin{lemma}\label{l:fwer}
Let $\tilde x\in \cF$ and $\bar x\in \argmin_{x\in \cF} \Psi(x,\tilde x)$. If $\Phi(\bar x)\geq\Phi(\tilde x)$, then $\tilde{x}$ is a KKT point of~\eqref{eq:QP}.
\end{lemma}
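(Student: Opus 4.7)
The plan is to derive two inequalities that sandwich $\Psi(\bar x, \tilde x)$ between $\Phi(\tilde x)$ and itself, forcing equality, and then appeal to \Cref{l:biKKT}.

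First, I would record the elementary identity $\Psi(\tilde x,\tilde x)=\Phi(\tilde x)$, which is immediate from $\tilde x^\top(M-N)\tilde x=\tilde x^\top Q\tilde x$ and the definition of $\Psi$. Next, I would use $N\succeq 0$ to obtain the DC lower bound
\begin{equation*}
\Psi(x,\tilde x)\;\geq\;\tfrac{1}{2}\bigl(\Phi(x)+\Phi(\tilde x)\bigr)\qquad\forall\,x,\tilde x\in\R^n,
\end{equation*}
since $(x-\tilde x)^\top N(x-\tilde x)\geq 0$ rearranges to $-2x^\top N\tilde x\geq -x^\top Nx-\tilde x^\top N\tilde x$, and combining with $Q=M-N$ yields the claim (this is the same calculation as in the commented-out \Cref{l:psiPhi} in the manuscript).

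With these two ingredients the conclusion follows quickly. Since $\bar x\in\argmin_{x\in\cF}\Psi(x,\tilde x)$ and $\tilde x\in\cF$, we have $\Psi(\bar x,\tilde x)\leq \Psi(\tilde x,\tilde x)=\Phi(\tilde x)$. On the other hand, applying the DC inequality at $x=\bar x$ together with the hypothesis $\Phi(\bar x)\geq\Phi(\tilde x)$ gives
\begin{equation*}
\Psi(\bar x,\tilde x)\;\geq\;\tfrac{1}{2}\bigl(\Phi(\bar x)+\Phi(\tilde x)\bigr)\;\geq\;\Phi(\tilde x).
\end{equation*}
Hence $\Psi(\bar x,\tilde x)=\Phi(\tilde x)=\Psi(\tilde x,\tilde x)$, so $\tilde x$ itself is a minimizer of $\Psi(\cdot,\tilde x)$ on $\cF$, i.e.\ $\tilde x\in\argmin_{x\in\cF}\Psi(x,\tilde x)$. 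By \Cref{l:biKKT}, $\tilde x$ is a KKT point of~\eqref{eq:QP}.

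There is no real obstacle here: the argument is a three-line chain of inequalities, and the only structural fact being used is the PSD-ness of $N$ in the chosen DC splitting, which is precisely what makes the lower bound $\Psi(x,\tilde x)\geq \tfrac{1}{2}(\Phi(x)+\Phi(\tilde x))$ hold. The slight subtlety worth flagging in the write-up is that the hypothesis $\Phi(\bar x)\geq\Phi(\tilde x)$ is used solely to flip $\tfrac{1}{2}(\Phi(\bar x)+\Phi(\tilde x))$ up to $\Phi(\tilde x)$; equivalently, one may rewrite the argument to conclude $\Phi(\bar x)=\Phi(\tilde x)$ as a by-product, which confirms that the majorization step of the DC/mountain-climbing scheme cannot strictly decrease the objective at $\tilde x$ precisely when $\tilde x$ is already KKT.
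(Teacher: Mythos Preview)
Your proof is correct and follows essentially the same approach as the paper: both use the identity $\Psi(\tilde x,\tilde x)=\Phi(\tilde x)$, the DC lower bound $\Psi(x,\tilde x)\geq\tfrac{1}{2}(\Phi(x)+\Phi(\tilde x))$, and \Cref{l:biKKT}. The only cosmetic difference is that the paper argues by contrapositive (if $\tilde x$ is not KKT then $\Phi(\bar x)<\Phi(\tilde x)$), whereas you argue directly by sandwiching $\Psi(\bar x,\tilde x)$ to conclude $\tilde x\in\argmin_{x\in\cF}\Psi(x,\tilde x)$.
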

\begin{proof}
    If $\tilde x\in \cF$ is not a KKT point of~\eqref{eq:QP}, by \Cref{l:biKKT} we have $$\min\limits_{x\in\cF}\Psi(x, \Tilde{x})<\Psi(\Tilde{x}, \Tilde{x})=\Phi (\Tilde{x}).$$
    For any $\Bar{x}\in \argmin_{x\in \cF} \Psi(x,\Tilde{x})$, $$\min\limits_{x\in\cF}\Psi(x, \Tilde{x})=\Psi(\bar x, \Tilde{x})\geq \frac{1}{2}\left (\Phi(\Tilde{x})+\Phi (\Bar{x})\right).$$  Combining the above two inequalities we deduce that $\Phi(\bar x)< \Phi(\tilde x)$.
\end{proof}
Based on~\Cref{l:fwer}, given $\tilde x \in\cF$, we can either determine that $\tilde x $ is a KKT point, or find a feasible point $\bar x\in \cF$  with a strictly smaller objective value than $\tilde x$. This leads naturally to the following procedure which strictly improves the objective value until arriving at a KKT point. 

\begin{center}
\fbox{
\begin{minipage}{0.6\textwidth}
\begin{center}
\textbf{Generalized Mountain Climbing Method}
\end{center}
Initialization: $k=0$.

Repeat:
\begin{enumerate}
\item $k=k+1$.
    \item Find $x_{k+1} \in \arg\min_{x\in \cF} \Psi(x, x_k)$.
\end{enumerate}
until $\Phi(x_{k+1})=\Phi(x_k) $.
\end{minipage}
}
\end{center}

If $Q \preceq 0$, we may set $M = 0$, in which case we recover the mountain climbing algorithm for concave QP as described in~\cite{konno1976maximization}. For this reason, we refer to the above procedure as the generalized mountain climbing \texttt{(GMC)} algorithm, as it also inherits the strict monotonicity property of the objective value prior to termination:
$$
\Phi(x_{k+1}) < \Phi(x_k), \quad  k = 0, 1, \ldots.
$$
A notable feature of the mountain climbing algorithm for concave QP is its finite convergence property~\cite{konno1976maximization}. 
In fact, readers familiar with the difference of convex literature should have already recognized \texttt{GMC} iteration as a special case of the Difference-of-Convex Algorithm (DCA) method applied to QP~\cite{Tao1997}. Tao and An~\cite{Tao1997} established that the DCA converges in finitely many iterations if one of the components in the DC decomposition is \textit{polyhedral convex}. While this condition indeed holds for the concave QP case, it cannot be fulfilled for indefinite QP
. Thus, it is not known whether the above \texttt{GMC} method converges in a finite number of iterations for indefinite QP.

Also,  the output $\bar x$ of the \texttt{GMC} method  may not satisfy the second-order condition $Q|_{\cH_{I_{\bar{x}}}} \succ 0$ required by~\Cref{thm:STc} for generating a valid cut. Therefore, additional modifications  are necessary.

\subsection{Some Implications of the Second Order Condition}

Let $\hat x\in \cF$.
Consider the following QP problem:
\begin{equation}
    \label{eq:changed convex problem}
    \begin{aligned}
    \Phi^*_{I_{\hat x}}:=\min_{x\in \R^{n}} \quad &   x^\top Q x + 2 d^\top x   \\
        \textrm{s.t.} \quad &  a_i^\top x=b_i, \forall i\in I_{\hat x}\\
        &a_i^\top x\leq b_i, \forall i\in[m]\setminus I_{\hat x}.
    \end{aligned}
\end{equation}
Note that any feasible solution of ~\eqref{eq:changed convex problem} is a feasible solution of the original QP problem~\eqref{eq:QP}. 
In addition, if $Q|_{\cH_{I_{\hat x}}}\succ 0$,~\eqref{eq:changed convex problem} is a convex QP problem and thus computationally tractable.

\begin{lemma}\label{l:no vertex no pd}
Let any $\hat x\in \cF$.  If  $Q|_{\cH_{I_x}}\nsucc0$, then there exists $\bar x\in \cF$ such that \begin{equation}\label{eq:C_I generating new x}
\Phi(\hat x)\geq \Phi(\bar x)\enspace \mathrm{and}~~ I_{\hat x}\subsetneqq I_{\Bar{x}}.
\end{equation}
\end{lemma}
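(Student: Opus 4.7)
The plan is to exploit the failure of $\cH_{I_{\hat x}}$-conditional positive definiteness to produce a line through $\hat x$ that stays in the affine hull of the currently active constraints and along which $\Phi$ does not increase, then walk along that line until an additional constraint becomes active. By the hypothesis $Q|_{\cH_{I_{\hat x}}}\nsucc 0$, there exists a nonzero $p\in \cH_{I_{\hat x}}$ with $p^\top Q p\le 0$. Since $A_{I_{\hat x}}p=0$, every point of the form $\hat x+tp$ automatically satisfies $a_i^\top(\hat x+tp)=b_i$ for all $i\in I_{\hat x}$, so the constraints active at $\hat x$ remain active all along the line.

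Next I would define the maximal feasible parameter interval
\[
t_+:=\sup\{t\ge 0:\hat x+tp\in\cF\},\qquad t_-:=\inf\{t\le 0:\hat x+tp\in\cF\}.
\]
Since the constraints indexed by $I_{\hat x}$ are never violated along the line, only constraints indexed by $[m]\setminus I_{\hat x}$ can become tight. Each of these is strict at $\hat x$, so one may move a small positive amount in either direction while remaining in $\cF$, giving $t_-<0<t_+$. Boundedness of $\cF$, combined with $p\ne 0$, forces at least one $i\in[m]\setminus I_{\hat x}$ to satisfy $a_i^\top p\ne 0$, and therefore $-\infty<t_-$ and $t_+<\infty$. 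Closedness of $\cF$ ensures that $\hat x+t_\pm p\in\cF$, and by definition of $t_\pm$ some new index $i\notin I_{\hat x}$ becomes active at each endpoint, so $I_{\hat x}\subsetneqq I_{\hat x+t_\pm p}$ in both cases.

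It then remains to argue that at least one of the two endpoints has objective value no larger than $\Phi(\hat x)$. Consider the one-variable function
\[
\phi(t):=\Phi(\hat x+tp)=\Phi(\hat x)+2t\,(Q\hat x+d)^\top p+t^2\,p^\top Q p.
\]
If $p^\top Q p<0$, then $\phi$ is strictly concave, so its minimum over $[t_-,t_+]$ is attained at an endpoint; writing $0$ as a strict convex combination of $t_-$ and $t_+$ yields $\min\{\phi(t_-),\phi(t_+)\}<\phi(0)=\Phi(\hat x)$. If $p^\top Q p=0$, then $\phi$ is affine in $t$, and the endpoint on which the linear term is non-positive satisfies $\phi(t_\pm)\le\Phi(\hat x)$. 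In either case the chosen endpoint provides the required $\bar x$ satisfying both conditions in~\eqref{eq:C_I generating new x}. I do not anticipate any substantive obstacle; the only point requiring care is verifying that $[t_-,t_+]$ is a genuine, bounded, nondegenerate interval, which is precisely where the assumed boundedness of $\cF$ is used.
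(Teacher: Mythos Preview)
Your proposal is correct and follows essentially the same approach as the paper: pick a nonzero direction $p\in\cH_{I_{\hat x}}$ with $p^\top Qp\le 0$, move along the line $\hat x+tp$ (which keeps all currently active constraints active) until a new constraint becomes tight, and check that the objective has not increased. The only cosmetic difference is that the paper first replaces $p$ by $-p$ if necessary so that $(Q\hat x+d)^\top p\le 0$ and then walks in the positive $t$-direction only, whereas you keep both endpoints $t_\pm$ and use concavity of $\phi$ to select the one with the smaller value; the two arguments are equivalent.
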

\begin{proof}
 If  $Q|_{\cH_{I_x}}\nsucc0$, there exists nonzero $h\in \cH_{I_{\hat x}}$ such that $ h^\top Qh\leq 0$ and $(Q\hat{x}+d)^\top h\leq 0$.
Since $\cF$ is bounded, there is at least one $i\in[m]\setminus I_{\hat x}$ such that $a_i^\top h>0$. Define:
$$
 \alpha_{\hat{x}} :=\min\limits_{\substack{i\in[m]\setminus I_{\hat x},\\a_i^\top  h>0}}\frac{b_i-a_i^\top \hat x}{a_i^\top h}.
 $$
 Clearly $\alpha_{\hat{x}}>0$.
Then we consider $
\Bar{x}:=\hat{x}+\alpha_{\hat{x}} h.$
It is immediate from the definition of $\alpha_{\hat{x}}$ that $\bar x\in \cF$ and $I_{\hat x}\subsetneqq I_{\Bar{x}}$. Moreover,
we have
\begin{equation*}
\Phi(\bar x)-\Phi(\hat x)
=\alpha_{\hat{x}}^2h^\top Qh+2\alpha_{\hat{x}}(Q\hat{x}+d)^\top h
\le 0.
\end{equation*}
\end{proof}

\subsection{Local Search Method and its Convergence}
 We now propose a local‐search method in  
Algorithm~\ref{alg:generalized downhill} for computing second order KKT point.  The algorithm starts  
from an initial feasible point $x_{0}\in\cF$ for~\eqref{eq:QP} and  
maintains an iteration counter $k$.  At iteration $k$, denote  
the current iterate by $\hat x = x_{k}$.  The update consists of two cases:

\begin{enumerate}
  \item[1.] \textbf{$Q\bigl|_{\cH_{I_{\hat x}}}\;\succ\;0$.} 
  
  Solve the convex QP
    \eqref{eq:changed convex problem} to obtain its minimizer $\tilde x$
    (line~\ref{algl:findtildexPhi}); clearly,
    \begin{equation}\label{eq:barxb1}
      \Phi(\tilde x)\;\le\;\Phi(\hat x)\,.  
    \end{equation}
    Next we perform the mountain‐climbing step starting from $\tilde x$ to produce $\bar x$ (\Cref{algline:findbarx1}).  The while loop breaks if $\Phi(\tilde x)\leq \Phi(\bar x)$ and set $\bar x=\tilde x$ in this case. So we always have:
    \begin{equation}\label{eq:barxb2}
      \Phi(\bar x)\;\le\;\Phi(\tilde x)\,.  
    \end{equation}

  \item[2.] \textbf{$Q|_{\cH_{I_{\hat x}}}\not\succ0$.}
  
   Invoke
    Lemma~\ref{l:no vertex no pd} (line~\ref{algline:no vertex no pd}) to
    produce $\bar x\in\cF$ such that
    \begin{equation}\label{eq:barxb3}
      \Phi(\bar x)\;\le\;\Phi(\hat x)
      \quad\text{and}\quad
      I_{\bar x}\;\supsetneqq\;I_{\hat x}\,.  
    \end{equation}
\end{enumerate}
Finally, set $x_{k+1}=\bar x$.  Since each case guarantees
\eqref{eq:barxb1}–\eqref{eq:barxb3}, it follows that
$$
  \Phi(x_{k+1}) \;\le\; \Phi(x_{k}).
$$
Therefore Algorithm~\ref{alg:generalized downhill} generates a nonincreasing  
sequence of objective values until termination.

\begin{algorithm}
    \caption{\texttt{Finite-Terminating GMC}}\label{alg:generalized downhill}
    \begin{algorithmic}[1]
    \Require {$Q \in \mathcal{S}^n$, $d \in \R^n$, $A \in \R^{m \times n}$, $b \in \R^m$, $x_0 \in \cF$.}
    \State {$k \leftarrow 0$.}
    \While {true}
     \State $\hat x \leftarrow x_k$.
\If{$Q|_{\cH_{I_{\hat x}}}\succ0$}\label{algline:vertexorpd}
        \State{Find  $\tilde x \in \argmin_{x\in \cF\cap \hat\cH_{I_{\hat x}}} \Phi(x)$}\label{algl:findtildexPhi}.
        \Comment{Solve the convex QP problem \eqref{eq:changed convex problem}.}
        \State{Find $\bar x\in \argmin_{x\in\cF}\Psi(x, \tilde{x})$.} \Comment{See $\Psi(\cdot, \cdot)$ in \eqref{eq: DC Psi function}.} \label{algline:findbarx1}
        \If{$\Phi(\bar x)\geq  \Phi(\tilde x)$}\label{algline:KKT}
        \State{$\bar x\leftarrow \tilde x $.}
     \State{\algorithmicbreak}
     \EndIf
    \Else
       \State{Find $\Bar{x}\in \cF$ such that  $\Phi(\bar x)\leq \Phi(\hat x)$ and $I_{\bar x}\supsetneqq I_{\hat x}$.}
       \Comment{See the proof of \Cref{l:no vertex no pd}.}
       \label{algline:no vertex no pd}
    \EndIf 
    \State{$x_{k+1}\gets \Bar{x}$.}
    \State{$k\leftarrow k+1$.}
    \EndWhile
    \Ensure{$\bar x \in \R^n$.}
    \end{algorithmic}
    \end{algorithm}

If 
\begin{equation}\label{eq:tildex}
\tilde x \in \argmin_{x\in \cF\cap \hat\cH_{I_{\hat x}}} \Phi(x),
\end{equation}
then $I_{\tilde x} \supset I_{\hat x}$ and thus $\cH_{I_{\tilde x}} \subset \cH_{I_{\hat x}}$. So if  $Q|_{\cH_{I_{\hat x}}}\succ0$, and $\tilde x$ satisfies~\eqref{eq:tildex}, it is necessary that   $Q|_{\cH_{I_{\tilde x}}}\succ0$. 
Based on~\Cref{l:fwer}, when the while loop terminates, the output $\bar x$ must be a  KKT point that satisfies $Q|_{\cH_{I_{\bar x}}} \succ 0$.
Moreover,~\Cref{alg:generalized downhill} enjoys the finite termination property.
\begin{theorem}\label{thm:gmcfinite}
\Cref{alg:generalized downhill} always terminates in finitely many steps.
\end{theorem}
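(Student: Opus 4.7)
The strategy is to partition the iterations of \Cref{alg:generalized downhill} into two classes according to the test at \Cref{algline:vertexorpd}, bound each class separately, and combine. Call an iteration \emph{Type A} if $Q|_{\cH_{I_{\hat x}}}\succ 0$ (the \textbf{if} branch), and \emph{Type B} otherwise (the \textbf{else} branch). The paragraph preceding the theorem already establishes that $\Phi(x_{k+1})\le\Phi(x_k)$ for every iteration, and I would use this monotonicity throughout.

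The easy bound is for Type B iterations. By \Cref{l:no vertex no pd}, each Type B iteration strictly enlarges the active index set: $I_{x_{k+1}}\supsetneqq I_{x_k}$. Since $|I_{x_k}|\le m$, no more than $m$ Type B iterations can occur consecutively; indeed, when $\rank(A_{I_{x_k}})=n$ the subspace $\cH_{I_{x_k}}=\{\mathbf{0}\}$, and by the convention from \Cref{sec:pre} we then have $Q|_{\cH_{I_{x_k}}}\succ 0$, forcing a Type A iteration. So Type B iterations only appear in blocks of length at most $m$.

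The main obstacle, and the core of the argument, is bounding the Type A iterations that do \emph{not} terminate (i.e., where the break at \Cref{algline:KKT} is skipped because $\Phi(\bar x)<\Phi(\tilde x)$). For each Type A iteration $k$ the convex QP \eqref{eq:changed convex problem} is well posed and its value
\[
\Phi^*_{I_{x_k}}\;=\;\min\{\Phi(x):x\in\cF\cap\hat\cH_{I_{x_k}}\}\;=\;\Phi(\tilde x)
\]
is finite. Non-termination at $k$ gives $\Phi(x_{k+1})=\Phi(\bar x)<\Phi(\tilde x)=\Phi^*_{I_{x_k}}$. If $k<k'$ are two consecutive non-terminating Type A indices, then monotonicity across the intervening (Type B) iterations yields $\Phi(x_{k'})\le\Phi(x_{k+1})$, while $x_{k'}\in\cF\cap\hat\cH_{I_{x_{k'}}}$ gives $\Phi^*_{I_{x_{k'}}}\le\Phi(x_{k'})$. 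Chaining,
\[
\Phi^*_{I_{x_{k'}}}\;\le\;\Phi(x_{k'})\;\le\;\Phi(x_{k+1})\;<\;\Phi^*_{I_{x_k}}.
\]
Hence the sequence of ``floor values'' $\Phi^*_{I_{x_k}}$ is strictly decreasing along the Type A non-terminating indices. Since these values lie in the finite set $\{\Phi^*_I:I\subset[m],\,Q|_{\cH_I}\succ 0\}$, at most $2^m$ such iterations can occur.

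Combining the two counts, the total number of iterations is bounded by roughly $(N+1)(m+1)+1$, where $N$ is the number of Type A non-terminating iterations, so the algorithm halts in finitely many steps. The only subtle point is the strict decrease of $\Phi^*_{I_{x_k}}$ across Type A iterations: it does not come from a one-step strict decrease of the objective (Type B iterations in between may leave $\Phi$ unchanged), but from the feasibility of $x_{k'}$ in the face $\cF\cap\hat\cH_{I_{x_{k'}}}$ combined with the strict drop $\Phi(x_{k+1})<\Phi^*_{I_{x_k}}$ obtained at the previous Type A iteration.
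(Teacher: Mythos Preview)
Your proof is correct and follows essentially the same approach as the paper: both arguments hinge on the observation that at a non-terminating Type~A iteration $k$ one has $\Phi(x_{k+1})<\Phi^*_{I_{x_k}}$, combined with the finiteness of the family of index sets (hence of the values $\Phi^*_I$). The paper packages this as a proof by contradiction---assume infinitely many iterations, extract two Type~A iterates $k<j$ with $I_{x_k}=I_{x_j}$, and derive $\Phi^*_{I_{x_j}}\le\Phi(x_{k+1})<\Phi^*_{I_{x_k}}=\Phi^*_{I_{x_j}}$---whereas you give the direct version, showing the face-minimum values strictly decrease along Type~A indices and thereby obtaining an explicit (if loose) iteration bound.
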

In view of~\Cref{thm:gmcfinite}, we refer to~\Cref{alg:generalized downhill} as the finite-terminating \texttt{GMC} algorithm.


\begin{remark}
 Numerous existing algorithms guarantee convergence to a KKT point $ \bar{x} $ fulfilling the second order necessary optimality condition  $ Q|_{\mathcal{H}_{I_{\bar{x}}}} \succeq 0 $. The output of Algorithm~\ref{alg:generalized downhill} achieves in finite number of steps a slightly stronger property: $ Q|_{\mathcal{H}_{I_{\bar{x}}}} \succ 0 $.  This is motivated by~\Cref{thm:STc}, which requires the positive definiteness of $ Q $ on the null space of the active constraints for constructing valid cuts. We emphasize that Algorithm~\ref{alg:generalized downhill} is not designed to compete with existing local methods in computational efficiency. Rather, it serves a distinct purpose: starting from an arbitrary feasible point (including outputs from other efficient local methods), the algorithm generates a KKT point satisfying $ Q|_{\mathcal{H}_{I_{\bar{x}}}} \succ 0 $ in a finite number of iterations, so that subsequent construction of cutting planes can go on.
\end{remark}

\section{Algorithm and Numerical Aspects}\label{sec:algorithm}

We have now collected all the necessary pieces to implement the generic cutting plane framework described in~\Cref{subsec:cutpf}. We rely on~\Cref{alg:generalized downhill} to obtain a KKT point $\bar{x}$ satisfying $Q|_{\mathcal{H}_{I_{\bar{x}}}} \succ 0$. Importantly,~\Cref{thm:gmcfinite} guarantees that this can be done by solving a finite number of convex QPs. Given such a point $\bar{x}$, and based on~\Cref{thm:STc}, a valid cut $c$ can be obtained by solving the convex SDP program~\eqref{eq:SDPsearchofc}--\eqref{eq:SDPsearchofcdual}.

The convergence theories developed in~\Cref{thm:STc} and~\Cref{thm:gmcfinite} rely on the exact solution of convex QPs and SDPs. However, due to the limitations of finite-precision arithmetic, we only have access to approximate solutions of these convex programs in practice. In particular, even determining the index set of active constraints $I_x$ for a given $x \in \mathcal{F}$ is nontrivial, and verifying whether $Q|_{\mathcal{H}_{I_{\bar{x}}}} \succ 0$ introduces additional complexity. These issues necessitate addressing inexactness at each intermediate step of Algorithm~\ref{alg:generalized downhill} as well as in the cutting plane method during practical implementation.

In this section, we briefly discuss relevant implementation details and describe our approach to handling these sources of inexactness. A comprehensive study of these challenges and their impact on the algorithm's convergence is beyond the scope of this paper.

\subsection{Intermediate Steps in Finite-Terminating \texttt{GMC}}
The intermediate steps of~\Cref{alg:generalized downhill} can only be performed approximately. This begins with the most basic operation of determining the index set of bounding constraints, $I_{x} = \{i \in [m] : a_i^\top x = b_i\}$. In practical implementation, we compute $I_x$ by
$
I_{x} = \{i \in [m] : |a_i^\top x - b_i| \leq \delta\}
$
for some accuracy parameter $0 < \delta \ll 1$. Similarly, the condition $Q|_{\cH_{I_x}} \succ 0$ is considered to hold if the numerically computed minimal eigenvalue of the matrix $Q|_{\cH_{I_x}}$ is greater than $-\delta$.

For steps involving the solution of convex QPs, we are content with a numerical solution returned by standard solvers. The inequality $\Phi(\bar x) \geq \Phi(\tilde x)$ is considered satisfied if $\Phi(\bar x) \geq \Phi(\tilde x) - \delta$.

These modifications are unavoidable due to the finite precision of floating-point arithmetic in practical implementations. The effect of these approximations on the actual convergence behavior of the algorithm is unknown. Nevertheless, in all our experiments, we consistently observe finite termination of~\Cref{alg:generalized downhill}.

\subsection{Valid Lower Bound}\label{subsec:valid lower}
The Lagrange dual program of~\eqref{eq:SDP-lb} can be written as follows:
\begin{equation}
    \label{eq:SDP-lb-d}
    \begin{aligned}
    \max_{S, T,\lambda} \quad &  \lambda \\
        \textrm{\rm s.t.} \quad & \begin{pmatrix}
Q & d \\
d^\top & -\lambda
\end{pmatrix}=S+\begin{pmatrix}
-A & b \\
\bzero^\top & 1 \\
\end{pmatrix}^\top
T
\begin{pmatrix}
-A & b  \\
\bzero^\top & 1 \\
\end{pmatrix} 
\\
& T\geq 0,\enspace S\succeq 0.
    \end{aligned}
\end{equation}
 When we solve~\eqref{eq:SDP-lb} and~\eqref{eq:SDP-lb-d} using an appropriate numerical solver, we obtain approximate optimal solution matrices $(S^*, T^*,\lambda^*)$. By adding a simple projection step if necessary, we can assume that $T^* \succeq 0$ and $S^* \succeq 0$.
Let $\Delta \in \mathcal{S}_n$ be the correction matrix such that:
\begin{equation*}
    \begin{aligned}
        \begin{pmatrix}
            Q & d \\
            d^\top & -\lambda^*
        \end{pmatrix}
        = \Delta + S^* 
        + \begin{pmatrix}
            -A & b \\
            \mathbf{0}^\top & 1
        \end{pmatrix}^\top
        T^*
        \begin{pmatrix}
            -A & b \\
            \mathbf{0}^\top & 1
        \end{pmatrix}.
    \end{aligned}
\end{equation*}
 Define
$
\delta := \min\left(0, \lambda_{\min}(\Delta)\right).
$
If there exists a constant $r_0>0$ such that $\cF \subset \{x \in\R^n : \|x\| \leq r_0\}$, then a valid lower bound of~\eqref{eq:QP} is given by
$
    \lambda^* + \delta (1 + r_0^2).$
For example, if $\cF \subset \R^n_+$ and $\max\{\sum_{i=1}^n x_i : x \in \cF\} \leq t$, we can set $r_0 = t$. Alternatively, if $\cF \subset [-t, t]^n$, we can set $r_0 = \sqrt{n}t$. 

The pseudocode for computing a valid lower bound, as described above, is summarized in~\Cref{alg:dnnbound} in~\Cref{sec-appendix:auxialgo}.

\subsection{Valid Cut}
For the same reasons discussed in~\Cref{subsec:valid lower}, the optimal solutions to~\eqref{eq:SDPsearchofc}--\eqref{eq:SDPsearchofcdual} are not accessible by any floating-point-based numerical solver. In fact, even obtaining a feasible solution to~\eqref{eq:SDPsearchofc} is not as straightforward as one might think. From a numerical SDP solver, we only have access to matrices $T^* \geq 0$ and $S^* \succeq 0$, and a vector $c^* \in \R^n$ such that
\begin{equation}
    \label{eq:SDPeq}
    \begin{aligned}
    & \begin{pmatrix}
    Q & d \\
    d^\top & -\nu_{_R}
    \end{pmatrix}
    = \Delta + S^* + \begin{pmatrix}
    -A & b \\
    \mathbf{0}^\top & 1 \\
    \end{pmatrix}^\top
    T^*
    \begin{pmatrix}
    -A & b \\
    \mathbf{0}^\top & 1 \\
    \end{pmatrix} \\
    & \qquad\qquad\qquad + \frac{1}{2}\begin{pmatrix}
    Q\bar{x} + d \\ 
    - \bar{x}^\top Q \bar{x} - d^\top \bar{x} + \beta
    \end{pmatrix}
    \begin{pmatrix}
    -(c^*)^\top & 1 + (c^*)^\top \bar{x}
    \end{pmatrix} \\
    & \qquad\qquad\qquad + \frac{1}{2}\begin{pmatrix}
    -c^* \\ 1 + (c^*)^\top \bar{x}
    \end{pmatrix}
    \begin{pmatrix}
    (Q\bar{x} + d)^\top & - \bar{x}^\top Q \bar{x} - d^\top \bar{x} + \beta
    \end{pmatrix},
    \end{aligned}
\end{equation}
where, again, $\Delta \in \mathcal{S}_n$ represents the  error term. Similar to~\Cref{subsec:valid lower}, we define
$
\delta := \min\left(0,\, \lambda_{\min}(\Delta)\right)
$ and let $r_0>0$ be a constant such that $\cF \subset \{x \in\R^n : \|x\| \leq r_0\}$.
A readily available lower bound for the cut region can be expressed as follows:
\begin{equation}\label{eq:validcut2}
\begin{aligned}
    \nu_{R} + \delta(1 + r_0^2) \leq \min_{x \in \R^{n}} \quad & x^\top Q x + 2 d^\top x \\
    \text{s.t.} \quad &  Ax \leq b \\
    & (c^*)^\top(x - \bar{x}) \leq 1.
\end{aligned}
\end{equation}
We refer to $\nu_R+\delta(1+r_0^2)$ as the \textit{cut lower bound} associated with $c^*$. Note that if there is no numerical error (i.e., $\delta=0$), then $c^*$ is a valid cut and~\eqref{eq:validcut} holds for $c=c^*$. However, if $\delta > 0$, we cannot guarantee a priori that $c^*$ is a valid cut; we can only assert the validity of~\eqref{eq:validcut2}.

If the lower bound $\nu_R+\delta(1+r_0^2)$ given in~\eqref{eq:validcut2} is not satisfactory—for example, if it is smaller than a prescribed threshold—we can further refine it now that $c^*$ is known. Specifically, we may directly compute the DNN bound of the QP program in~\eqref{eq:validcut2}.

In practical implementations, it is necessary to store a lower bound of the QP program in~\eqref{eq:validcut2} so that the lower bound for all regions that have been cut can be retrieved. We emphasize that this additional storage is required only due to numerical errors that may arise when solving the program~\eqref{eq:SDPsearchofc}--\eqref{eq:SDPsearchofcdual}. The pseudocode for generating the cut and computing the cut lower bound is provided in~\Cref{alg:dnncut} in~\Cref{sec-appendix:auxialgo}.

\subsection{Main Algorithm}
We are now ready to present the main algorithm of this paper in~\Cref{alg:CuP-global}. This algorithm implements the generic cutting plane method described in~\Cref{subsec:cutpf}, and relies on~\Cref{alg:generalized downhill} to obtain a second-order KKT point $\bar{x}$ and to generate a valid cut $c$ by solving the convex SDP problem~\eqref{eq:SDPsearchofc}--\eqref{eq:SDPsearchofcdual}. We refer to this algorithm as \texttt{DCQP}, which stands for \textbf{D}NN \textbf{C}utting Planes based \textbf{QP} solver.

\Cref{alg:CuP-global} maintains a matrix ${\bf{A}}$  that contains $A$ as submatrix and a vector ${\bf{b}}$ that contains $b$ as a subvector. 
It starts by computing an initial lower bound from the DNN relaxation of the program~\eqref{eq:QP} and an initial upper bound from the GMC method (\Cref{algline:dnnbound0} to~\Cref{algline:intialbarv}). If the initial relative gap is not sufficiently small, it proceeds to  add cutting planes until the criterion
$
\bar{v} - \underline{v}^r \leq \epsilon \max(|\bar{v}|,\epsilon)
$
is satisfied (\Cref{algline:termnation-criterion-g} to~\Cref{algline:endwhile}).   Here,  $\underline v^r$  is a lower bound of the minimal objective value over the reduced region $\cA:=\{x\in \R^n: {\bf{A}}x \leq {\bf{b}} \}$, i.e.,
\begin{equation}\label{eq:bAb}
\begin{aligned}
      \underline v^r\leq \min_{x \in \R^{n}} \quad &   x^\top Q x + 2 d^\top x   \\
    \textrm{s.t.} \quad &  x\in \cA .
\end{aligned}
\end{equation}
We also compute  a lower bound of the minimal objective value over the removed region $\cF\backslash \cA$, i.e., 
\begin{equation}\label{eq:bAbc}
\begin{aligned}
     \underline v^c \leq \min_{x \in \R^{n}} \quad &   x^\top Q x + 2 d^\top x   \\
    \textrm{s.t.} \quad &  x\in \cF, x \notin  \cA
\end{aligned}
\end{equation}
Hence, at any stage of~\Cref{alg:CuP-global}, $$\underline v:=\min\{\underline{v}^r,\, \underline{v}^c\}$$ is a lower bound for $\Phi^*(\mathcal{F})$. An upper bound $\overline{v}$ for $\Phi^*(\mathcal{F})$ is also maintained at each iteration as the smallest objective value among all feasible solutions found so far.

\begin{algorithm}
    \caption{\texttt{DNN\_Cutting\_Plane\_for\_QP (DCQP)}}
    \begin{algorithmic}[1]
    \Require $Q \in \cS^n$, $d \in \R^n$, $A \in \R^{m\times n}$, $b \in \R^m$, relative gap tolerance $\epsilon > 0$, parameter $0<\eta\leq \epsilon$.
    \State  $\underline{v^c} \gets +\infty$, ${\bf{A}}\leftarrow A$, ${\bf{b}}\leftarrow b$.
  \State $(\underline v^r,\bar z) \leftarrow$ \texttt{DNN\_Lower\_Bound($Q,d, \bf{A},\bf{b}$).}
  \Comment{See \Cref{alg:dnnbound}.} \label{algline:dnnbound0}
  \State
  $\bar x\leftarrow$ \texttt{Finite\_Terminating\_GMC($Q, d, {\bf{A}}, {\bf{b}}, \bar z$)}.
        \Comment{See~\Cref{alg:generalized downhill}.} \label{algline:ftgmc}
        \State $\bar v \gets  \bar x^\top Q \bar x + 2 d^\top \bar x$, $\underline v\leftarrow \underline v^r$. \label{algline:intialbarv}
\While{$\bar v -\underline v^r > \epsilon  \max( \vert \bar v\vert,\epsilon)$}\label{algline:termnation-criterion-g}
        \State $\nu_R\leftarrow \bar v-\eta \max(|\bar v|,\epsilon) $, $\nu \leftarrow   \bar v-\epsilon \max(\vert \bar v\vert, \epsilon)$.
        \State  $(c, w) \leftarrow$\texttt{DNN\_Cut($Q, d, {\bf{A}}, {\bf{b}},\bar x, \bar z, \nu_R, \nu $)}.
    \Comment{See~\Cref{alg:dnncut}.} \label{algline:dnncut}
    \State ${\bf{A}} \leftarrow \begin{pmatrix}
    {\bf{A}}\\
    -c^\top
    \end{pmatrix}$, ${\bf{b}} \leftarrow \begin{pmatrix}
    {\bf{b}}\\
    -c^\top \bar x-1
    \end{pmatrix}$. \label{algline:addcut}
    \Comment{Add a valid cut to $\cF$.}
       \State $\underline v^c \leftarrow \min \{w, \underline v^c\}$. \label{algline:vc}
       \State $(\underline v^r,\bar z) \leftarrow$ \texttt{DNN\_Lower\_Bound($Q,d, \bf{A},\bf{b}$).}
        \Comment{See \Cref{alg:dnnbound}.}
         \State $\bar x\leftarrow$ \texttt{Finite\_Terminating\_GMC($Q, d, {\bf{A}}, {\bf{b}}, \bar z$)}.
        \Comment{See~\Cref{alg:generalized downhill}.} 
        \State $\bar v \gets \min\left\{\bar v, \bar x^\top Q \bar x + 2 d^\top \bar x\right\}$. \Comment{Keep the best objective value ever reached.}
        \State $\underline v \leftarrow \min\{\underline v^r, \underline v^c\}$.
    \EndWhile \label{algline:endwhile}
    \Ensure Upper bound $\bar v$, and lower bound  $\underline v$.
    \end{algorithmic}
    \label{alg:CuP-global}
\end{algorithm}

\begin{remark}
\Cref{alg:CuP-global} is not an extension of the main algorithm \texttt{QuadProgCD-G} proposed in~\cite{QuZengLou2025} from special concave QPs to indefinite QPs, although both algorithms fit within the generic cutting plane framework. Specifically, the cut generated by \texttt{QuadProgCD-G} is always a scalar multiple of Konno's cut, which is computed by solving $n$ linear programs and is restricted to the concave case. In contrast, the cut in \Cref{alg:CuP-global} is obtained by solving a single semidefinite program. Even when specialized to the concave QP setting, the cut generated in \Cref{alg:CuP-global} remains fundamentally different from that of \texttt{QuadProgCD-G}.
\end{remark}

\begin{remark}\label{rem:swrr}
If the output $\bar{x}$ of~\Cref{alg:generalized downhill} is a second order KKT point, and if the SDP program~\eqref{eq:SDPsearchofc} is solved exactly to optimality, then $\underline{v}^c \geq \nu_R = \bar{v} - \eta \max(\vert \bar v \vert,\epsilon) \geq \bar{v} - \epsilon \max(\vert \bar v \vert,\epsilon)$.   In this idealized situation, at termination of~\Cref{alg:CuP-global} the output must satisfy
\begin{equation}\label{eq:relativgaplessepsiln}
\bar{v} - \underline{v} \leq \epsilon \max(|\bar v|,\epsilon).
\end{equation}
However, due to numerical errors, the output $\bar{x}$ of~\Cref{alg:generalized downhill} is only close to a second-order KKT point, and the SDP program~\eqref{eq:SDPsearchofc} is only solved approximately; therefore, in principle, we have no guarantee on $\underline{v}^c$ when~\Cref{alg:CuP-global} terminates. Nevertheless, if all numerical errors in the underlying steps are controlled to high precision, it is expected that $\underline{v}^c$ remains close to $\bar{v} - \epsilon\max(|\bar v|,\epsilon)$. This is consistent with our observations in numerical experiments, see~\Cref{rem:instance11}.
\end{remark}

\begin{remark}
\Cref{thm:STc} ensures that a cutting plane can always be generated as long as the termination criterion is not satisfied. However, even when assuming exact arithmetic (i.e., no numerical errors), this property alone is not enough to prove the convergence of~\Cref{alg:CuP-global}. A rigorous convergence analysis is left for future work; in the rest of this paper, we focus on presenting the numerical behavior of the algorithm.
\end{remark}

\section{Numerical Experiments}\label{sec:experiments}

In this section, we evaluate the numerical performance of the proposed cutting plane algorithm, \texttt{DCQP} (Algorithm~\ref{alg:CuP-global}). We compare \texttt{DCQP} with the popular commercial solver \texttt{Gurobi} (version 12.0.1)
, as well as with an open-source branch-and-bound-based academic QP solver \texttt{QPL}~\cite{LiuzziLocatelliPiccialli2022}.\footnote{ \url{https://github.com/gliuzzi/QPL}.}

Our algorithm \texttt{DCQP} is implemented in \texttt{MATLAB}, and \texttt{Gurobi} is run through its \texttt{MATLAB} interface. \texttt{QPL} is written and executed using \texttt{Julia} (version 1.11.4). All experiments are conducted on a Windows desktop equipped with an Intel(R) Core(TM) i7-14700 CPU (2.10 GHz, 20 cores) and 64GB RAM.
The implementation of the \texttt{DCQP} algorithm is available at \url{https://github.com/zhengqu-x/DCQP}.

The accuracy of the solutions is evaluated using the \textit{relative gap}, defined as
\begin{equation}\label{eq:rg}
\frac{\bar{v} - \underline{v}}{\max(|\bar{v}|, \epsilon)},
\end{equation}
where $\bar{v}$ denotes the best upper bound and $\underline{v}$ denotes the best lower bound obtained at any stage of the algorithm.

We set $\epsilon = 10^{-4}$ and  $\eta = 9\times 10^{-5}$ in \texttt{DCQP} (Algorithm \ref{alg:CuP-global}). The SDP subproblems are solved with \texttt{MOSEK} (version 11.0.12)
, and the convex QP subproblems are solved with \texttt{Gurobi} (version 12.0.1).  All the displayed computational time are measured in seconds. In the following, without further specification, we say that an algorithm \textit{successfully solves the instance} if it reduces the relative gap~\eqref{eq:rg} below $10^{-4}$.

\subsection{Instances}
We evaluate our approach on both the benchmark  QP dataset 
from~\cite{chen2012globally}, as well as on synthetically generated data. The instances used in our comparisons are publicly available at the above repository.  All instances fit into the following model\footnote{For notational simplicity, we present our approach for QP problems formulated with inequality constraints only. Extending the algorithm to QP problems with equality constraints is straightforward. The details are omitted for brevity.}:
\begin{equation}\label{eq:RandQP}
\begin{aligned}
\min_{x\in\R^n} \quad &  x^\top Q x +2 d^\top x  \\
\textrm{s.t.} \quad & A x \le b \\
& A_{\rm eq} x = b_{\rm eq} \\
& \bzero \le x \le \bone.
\end{aligned}
\end{equation}
In the following, $m_{\rm eq}$ denotes the number of equality constraints (i.e., the dimension of the vector $b_{\rm eq}$) in the model~\eqref{eq:RandQP}.

\subsubsection{Benchmark Data}
The benchmark dataset consists of 64 indefinite QP instances used in~\cite{chen2012globally}, and is downloaded from \href{https://sburer.github.io/projects.html}{https://sburer.github.io/projects.html}.

This dataset has been widely adopted for numerical comparisons in previous work~\cite{LiuzziLocatelliPiccialli2022,Xia2020}. These 64 instances are categorized into four groups: 
\begin{center}
\texttt{qp\_20\_10}, \texttt{qp\_30\_15}, \texttt{qp\_40\_20}, and \texttt{qp\_50\_25}.
\end{center}In these group names, the first number indicates the number of variables $n$, while the second number indicates the number of inequality constraints $m$.  The numbers of equality constraints for these four groups are $4$, $6$, $8$, and $10$, respectively. Each group contains 16 instances. The average density (proportions of nonzero entries) of matrices $Q$, $A_{\rm eq}$, and $A$ are \{0.62, 0.39, 0.38\}, \{0.61, 0.38, 0.38\}, \{0.6, 0.38, 0.37\}, \{0.6, 0.38, 0.38\}, respectively for the four groups.

\subsubsection{Synthetic Data}
To further evaluate performance on larger-scale problems, we generated 140 synthetic instances with $n = 100$ variables. Each instance follows the form of~\eqref{eq:RandQP}, where the data matrices $Q$, $A$, $b$, $A_{\rm eq}$, and $b_{\rm eq}$ are randomly generated. Additionally, the matrix $A$ includes a row of all ones. Each instance features 51 inequality constraints: 50 are randomly generated, and one corresponds to the normalization constraint $\bone^\top x \le t$. Further details on the data generation process can be found in the git repository \url{https://github.com/zhengqu-x/DCQP}.

The 140 instances are divided into seven groups based on the data distribution, the number of equality constraints, and the density of the matrices. The names of the seven groups are:
\begin{center}
    \texttt{qp\_n\_0\_1}, \texttt{qp\_n\_0\_3}, \texttt{qp\_n\_0\_9}, 
     \texttt{qp\_u\_0\_1}, \texttt{qp\_u\_0\_3}, \texttt{qp\_u\_0\_9}, and 
     \texttt{qp\_u\_25\_1}.
\end{center}
Here, \texttt{n} and \texttt{u} denote whether the random entries are drawn from a normal or uniform distribution, respectively. The first number indicates the number of equality constraints ($m_{\rm eq}$), and the last number specifies the density level—the larger this value, the denser the matrices $Q$ and $A$. For the \texttt{qp\_n\_0\_1}, \texttt{qp\_n\_0\_3}, and
\texttt{qp\_n\_0\_9} instances, the average proportions of nonzero entries in matrices $Q$ and $A$ are \{0.37, 0.11\}, \{0.97, 0.27\}, and \{1, 0.61\}, respectively.
For the \texttt{qp\_u\_0\_1}, \texttt{qp\_u\_0\_3}, and \texttt{qp\_u\_0\_9} instances, these proportions are \{0.82, 0.2\}, \{1, 0.46\}, and \{1, 0.84\}, respectively. For the \texttt{qp\_u\_25\_1} instances, the average proportions of nonzero entries in matrices $Q$, $A_{\rm eq}$, and $A$ are \{0.81, 0.18, 0.2\}.

Each group contains 20 instances. The first six groups correspond to instances without equality constraints, while the last group (\texttt{qp\_u\_25\_1}) contains 25 equality constraints.

\subsection{Results for Benchmark Data}\label{sec:percentageRandQP}


In this section, we compare \texttt{DCQP} with \texttt{Gurobi} and \texttt{QPL} on the 64 benchmark  instances.  In~\Cref{fig:comprandQP}, we show the plots of the percentage of instances solved versus the running time of the three methods, for each of the four groups \texttt{qp\_20\_10}, \texttt{qp\_30\_15}, \texttt{qp\_40\_20} and \texttt{qp\_50\_25}.  In~\Cref{table:randqp}, we provide additional details, including the initial relative gap (measured before the while loop begins in~\Cref{alg:CuP-global}), the final relative gap, and the number of cutting planes generated for each of the 64 instances.

From \Cref{fig:comprandQP}, we observe that for {all the four groups}, our \texttt{DCQP}  successfully reduced the relative gap below $10^{-4}$ for all the instances in the same group within the shortest wall-clock time.

\begin{figure}[ht]
    \centering
    \begin{subfigure}[t]{0.45\textwidth}
        \includegraphics[width=\textwidth]{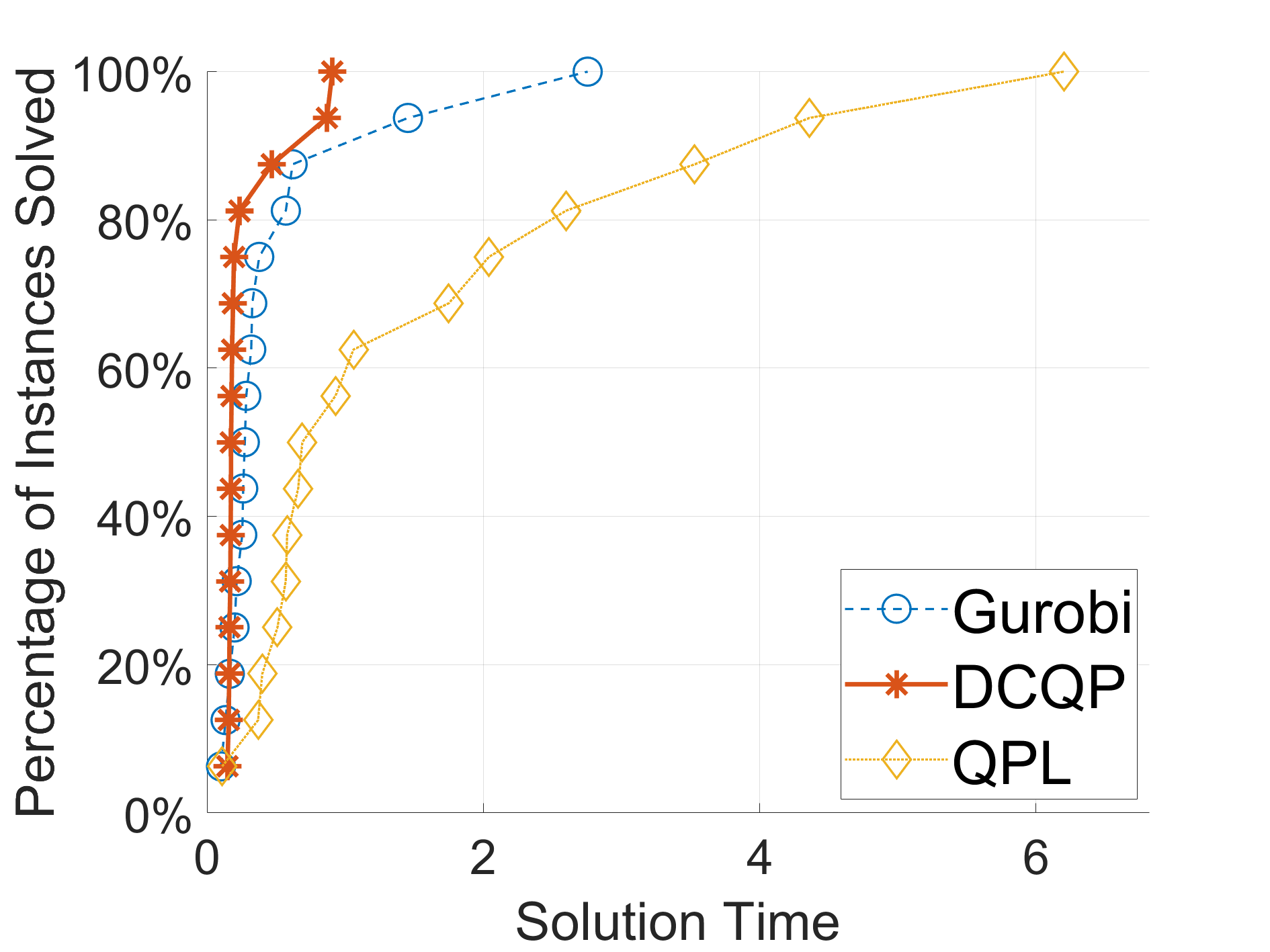}
        \caption{\texttt{qp}\_\texttt{20}\_\texttt{10} group.}
  \label{subfig1:QC100}
    \end{subfigure}
    \begin{subfigure}[t]{0.45\textwidth}
        \includegraphics[width=\textwidth]{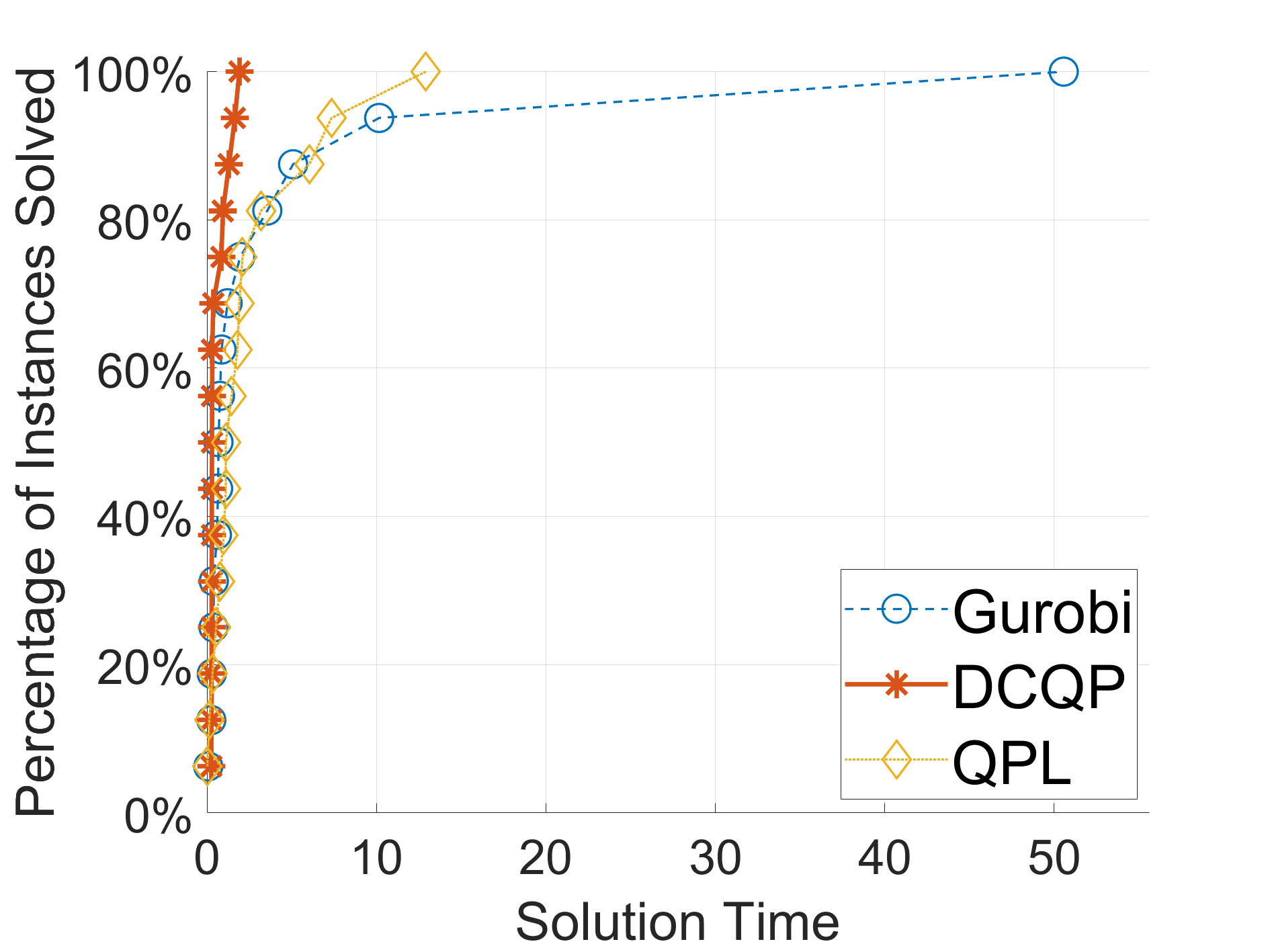}
        \caption{\texttt{qp}\_\texttt{30}\_\texttt{15} group.}
    \label{subfig2:QC100}
    \end{subfigure}
    \hfill
    \begin{subfigure}[t]{0.45\textwidth}
        \includegraphics[width=\textwidth]{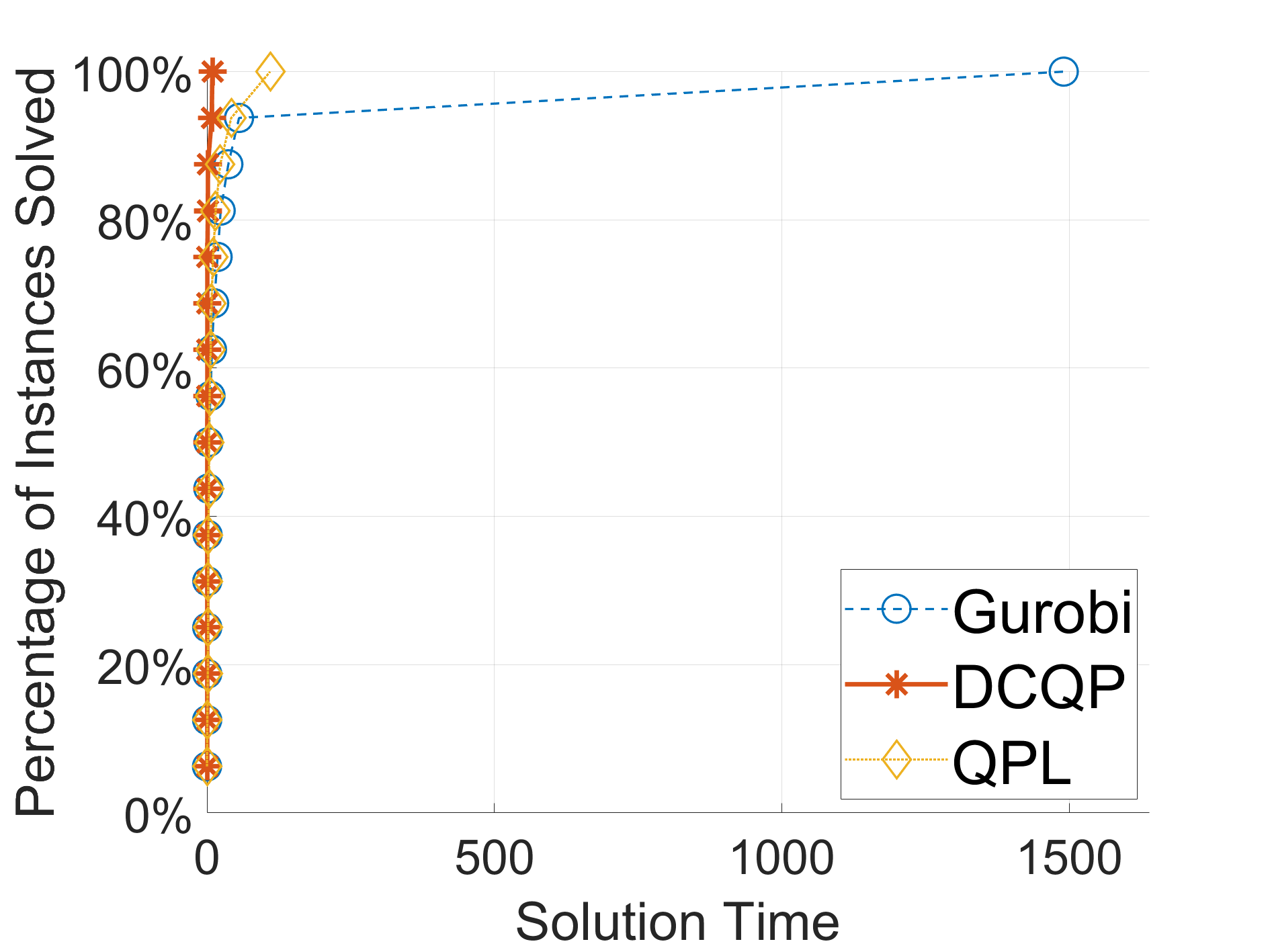}
        \caption{\texttt{qp}\_\texttt{40}\_\texttt{20} group.}
   \label{subfig3:QC100}
    \end{subfigure}
    \begin{subfigure}[t]{0.45\textwidth}
        \includegraphics[width=\textwidth]{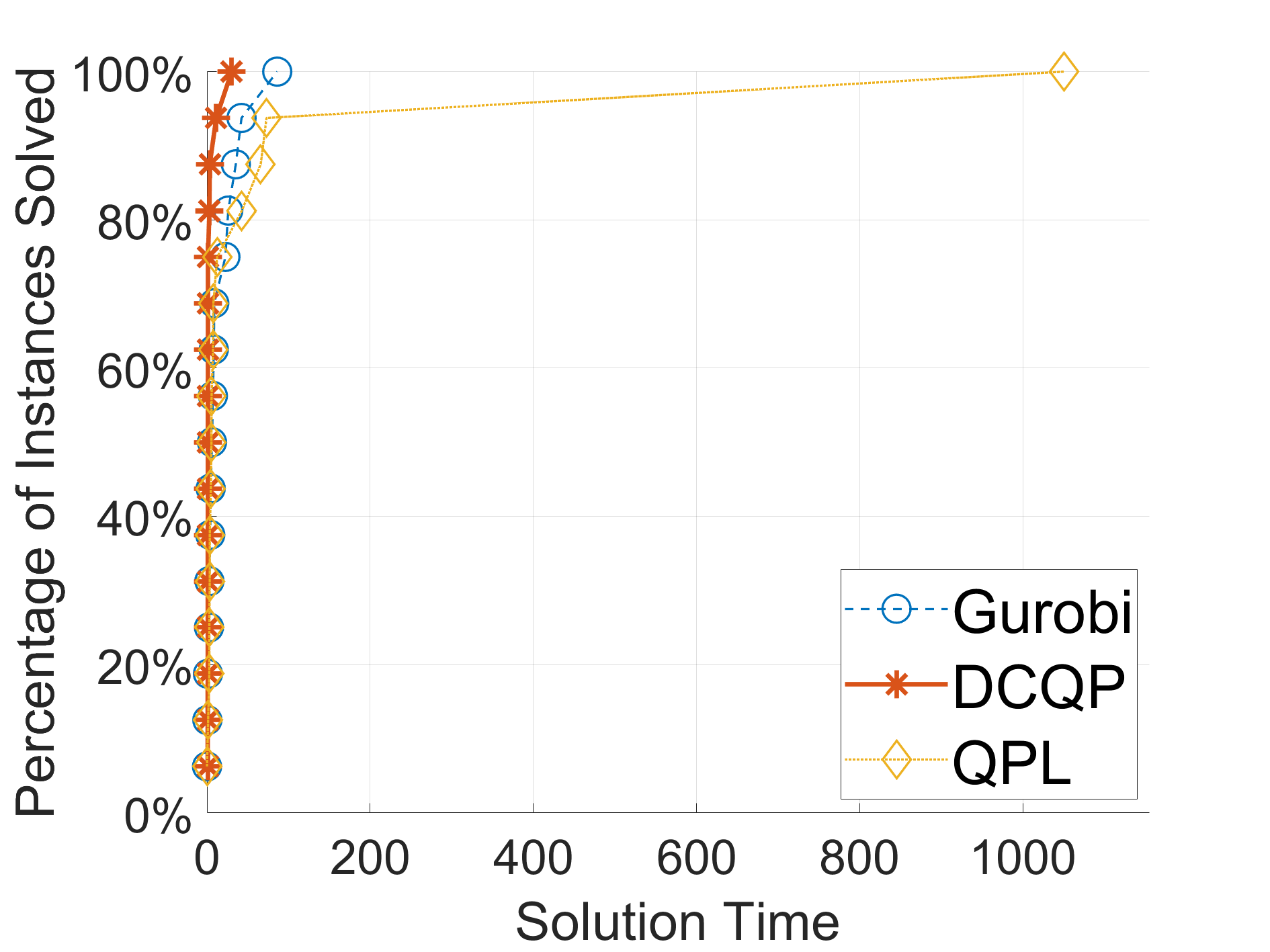}
        \caption{\texttt{qp}\_\texttt{50}\_\texttt{25} group.}
   \label{subfig4:QC100}
    \end{subfigure}
    \caption{Comparison of the percentage of instances solved versus computational time across four groups of benchmark instances, each comprising 16 instances.}
    \label{fig:comprandQP}
\end{figure}

By examining~\Cref{table:randqp}, we observe that, for most instances in the benchmark dataset, the number of generated cuts is zero. This suggests that, in these cases, the DNN bound is already sufficiently close to the optimal value, and no cutting planes are required to achieve the target relative gap accuracy of $10^{-4}$. Although the short computational times for these instances do not directly demonstrate the benefits of our approach, they do underscore the strong potential of using the DNN bound in globally solving QP problems. For instances where the DNN bound is not sufficiently tight, our approach becomes effective, successfully reducing the gap by adding only a small number of cutting planes.

Overall, the performance of our DNN-based cutting plane approach is largely comparable to the other two methods, but demonstrates greater stability on these 64 instances, exhibiting the smallest variation in computational time across all cases.


\subsection{Results for Synthetic Instances without Equality Constraints}\label{sec:percentagesynthetic}

In this section, we compare \texttt{DCQP}, \texttt{Gurobi}, and \texttt{QPL} on the six groups  of  synthetic instances with $m_{eq}=0$: \texttt{qp\_n\_0\_1}, \texttt{qp\_n\_0\_3}, \texttt{qp\_n\_0\_9}, \texttt{qp\_u\_0\_1}, \texttt{qp\_u\_0\_3} and \texttt{qp\_u\_0\_9}.

\subsubsection{Performance within One Hour}\label{subsubsec:synnoe1h}

We evaluate all three methods with a 3600s time limit, presenting the percentage of solved instances over time for each group in~\Cref{fig:comprandsynthetic}, and detailed gap statistics in~\Cref{table:merged-normalrandomqp} and~\Cref{table:merged-randomqp}. Key observations are as follows:
\begin{itemize}
    \item \textbf{Low-density} (\texttt{qp\_n\_0\_1}, \texttt{qp\_u\_0\_1}): Both \texttt{QPL} and \texttt{DCQP} efficiently solve all instances, while \texttt{Gurobi} performs well on \texttt{qp\_n\_0\_1} but is less effective on \texttt{qp\_u\_0\_1}. \texttt{DCQP} typically requires zero cuts, indicating tight DNN bounds for sparse cases.
    \item \textbf{Intermediate density} (\texttt{qp\_n\_0\_3}, \texttt{qp\_u\_0\_3}): All methods require more time. \texttt{DCQP} solves nearly all instances within 3600s, whereas \texttt{Gurobi} fails on 6 (\texttt{qp\_n\_0\_3}) and 10 (\texttt{qp\_u\_0\_3}) instances, and \texttt{QPL} fails on 4 and 17 instances, respectively.
    \item \textbf{High density} (\texttt{qp\_n\_0\_9}, \texttt{qp\_u\_0\_9}): Only \texttt{DCQP} reliably solves all instances within the time limit; both \texttt{Gurobi} and \texttt{QPL} fail on nearly all.
\end{itemize}
Overall, \texttt{DCQP} achieves the best solvability and computational efficiency, particularly for moderately sparse and dense nonconvex QP instances.

\begin{figure}[ht]
    \centering
    \begin{subfigure}[t]{0.45\textwidth}
        \includegraphics[width=\textwidth]{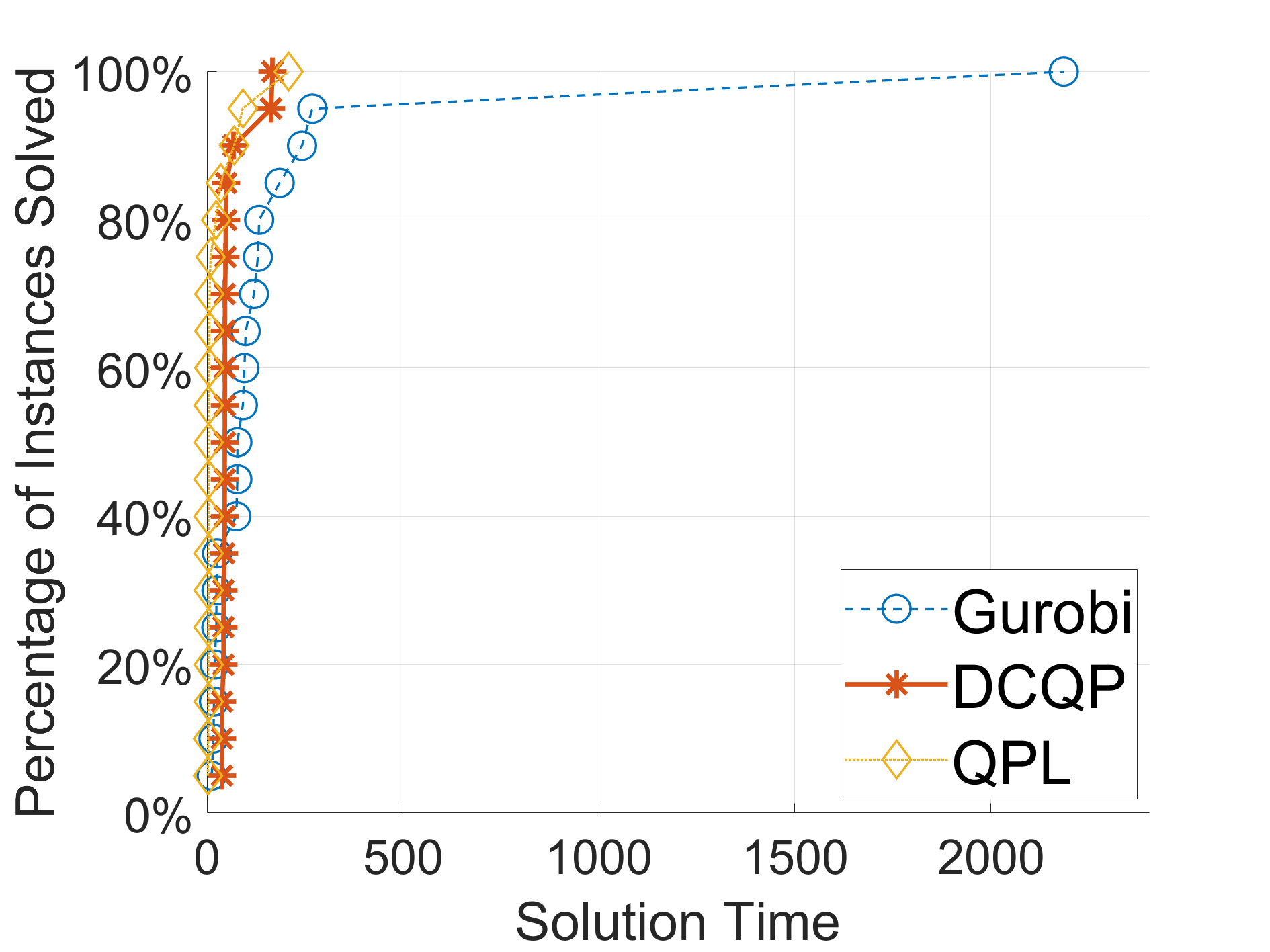}
        \caption{\texttt{qp\_u\_0\_1} group.}
  \label{subfig1:QC100-2}
    \end{subfigure}
    \begin{subfigure}[t]{0.45\textwidth}
        \includegraphics[width=\textwidth]{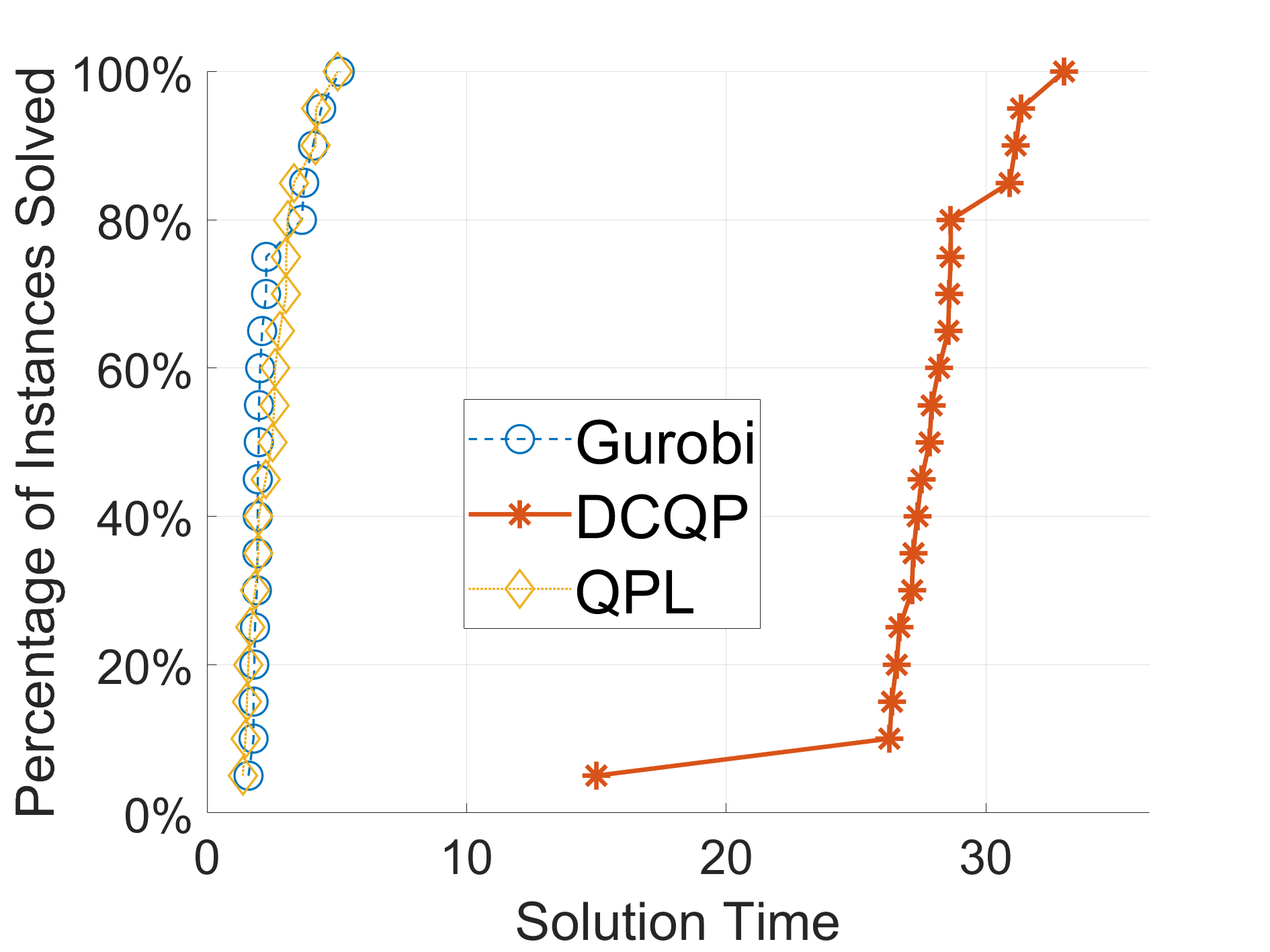}
        \caption{\texttt{qp\_n\_0\_1} group.}
    \label{subfig2:QC100-2}
    \end{subfigure}
    \hfill
    \begin{subfigure}[t]{0.45\textwidth}
        \includegraphics[width=\textwidth]{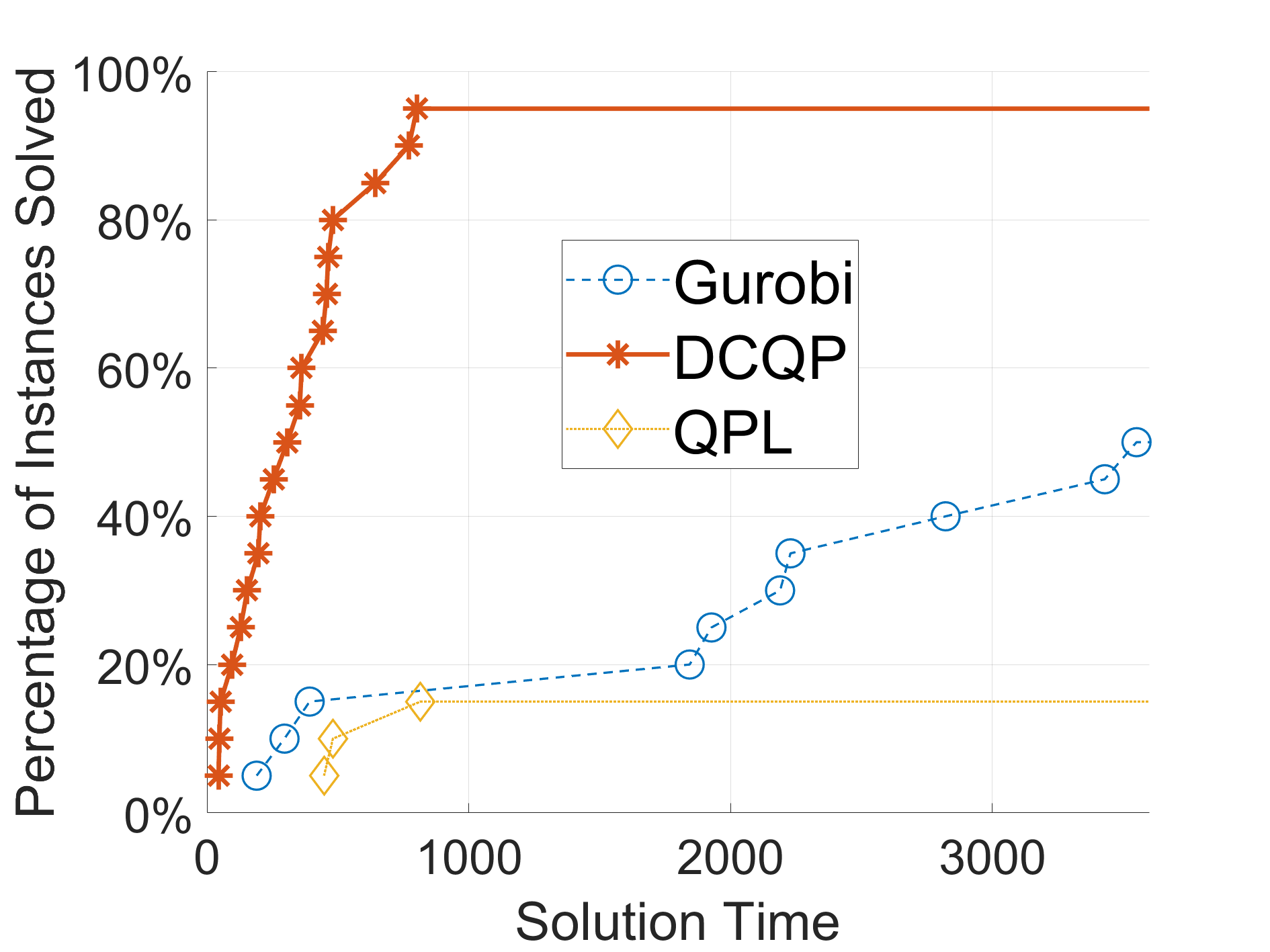}
        \caption{\texttt{qp\_u\_0\_3} group.}
   \label{subfig3:QC100-2}
    \end{subfigure}
    \begin{subfigure}[t]{0.45\textwidth}
        \includegraphics[width=\textwidth]{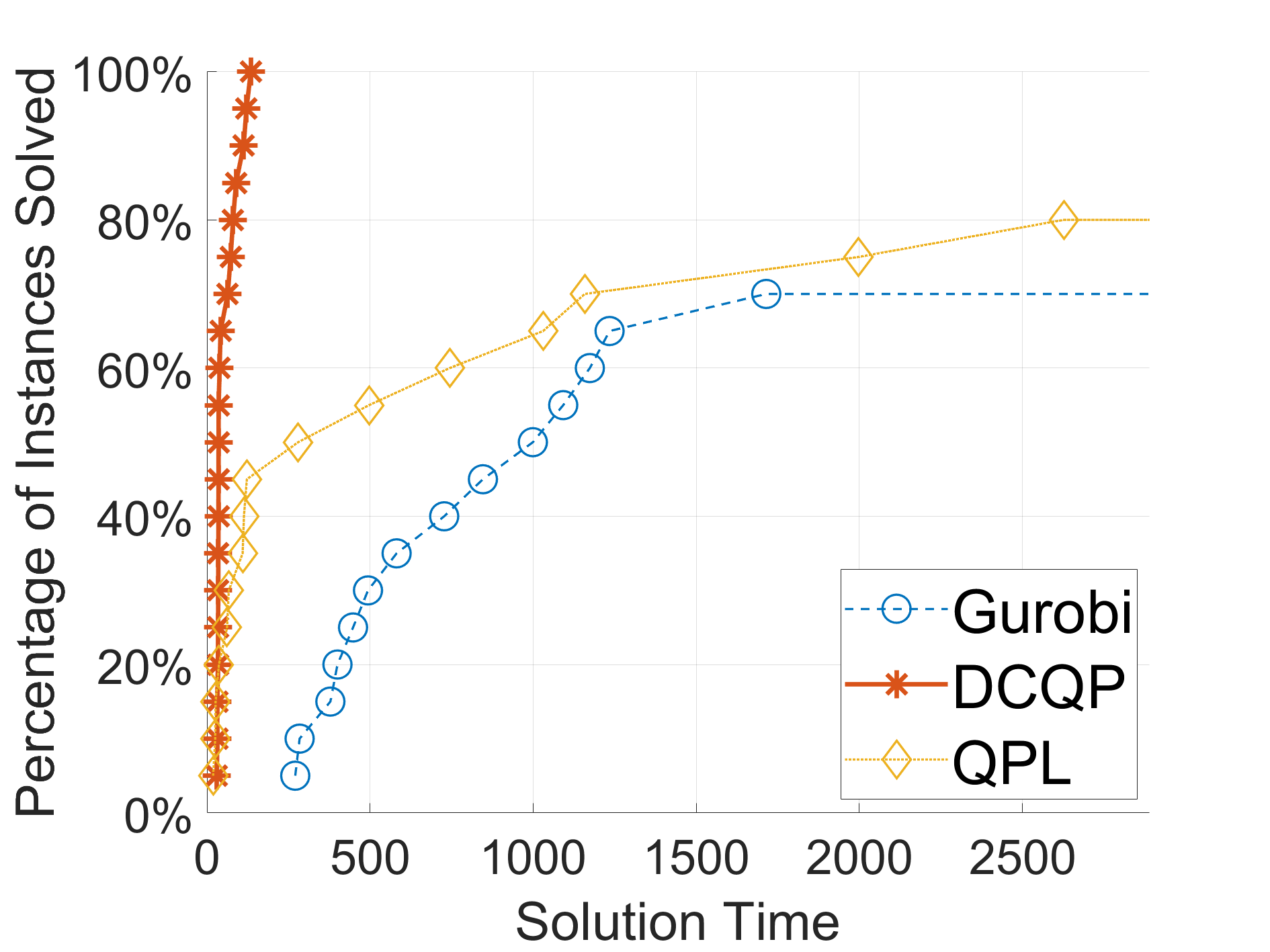}
        \caption{\texttt{qp\_n\_0\_3} group.}
   \label{subfig4:QC100-2}
    \end{subfigure}
    \begin{subfigure}[t]{0.45\textwidth}
        \includegraphics[width=\textwidth]{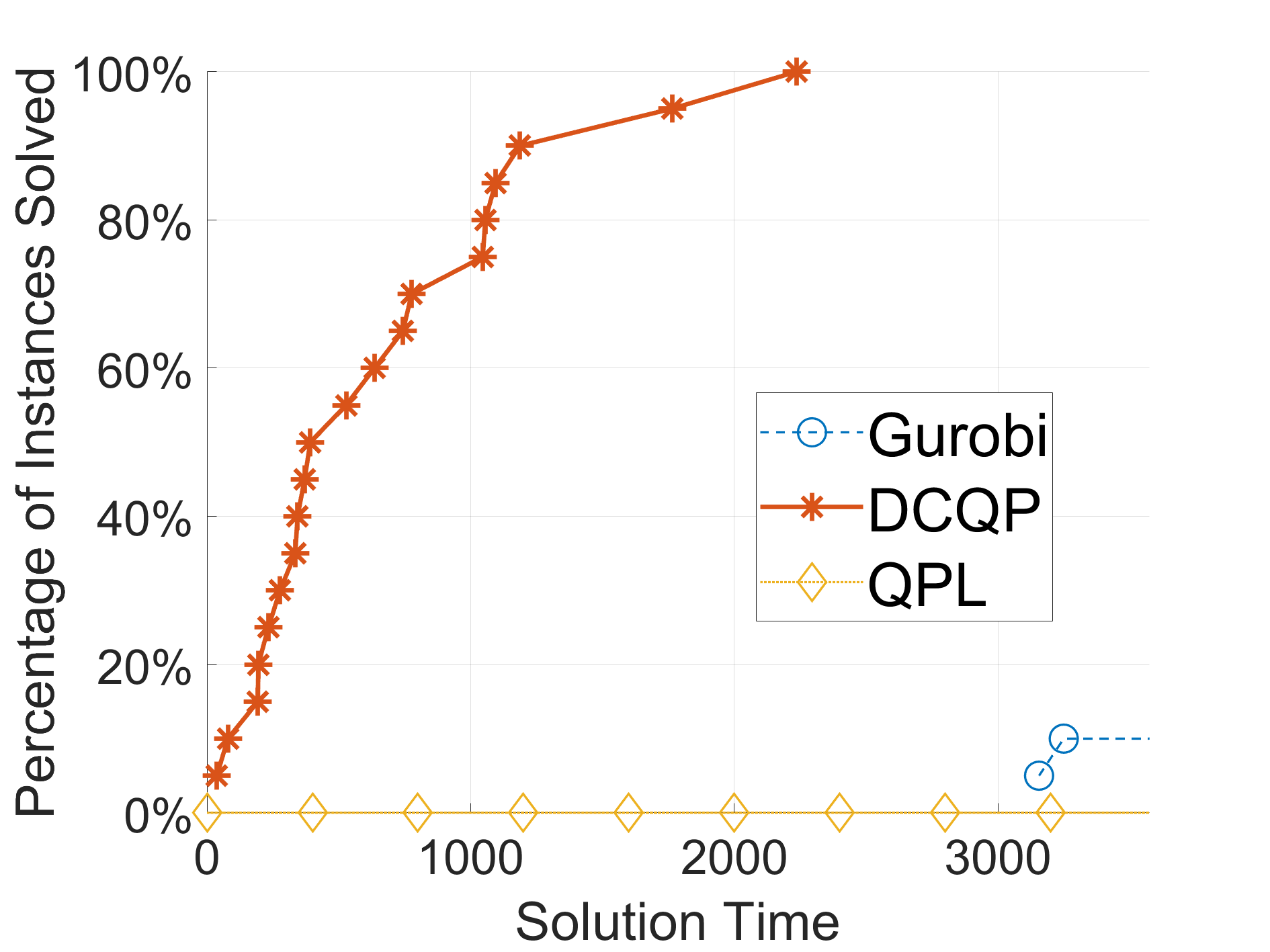}
        \caption{\texttt{qp\_u\_0\_9} group.}
   \label{subfig5:QC100-2}
    \end{subfigure}
    \begin{subfigure}[t]{0.45\textwidth}
        \includegraphics[width=\textwidth]{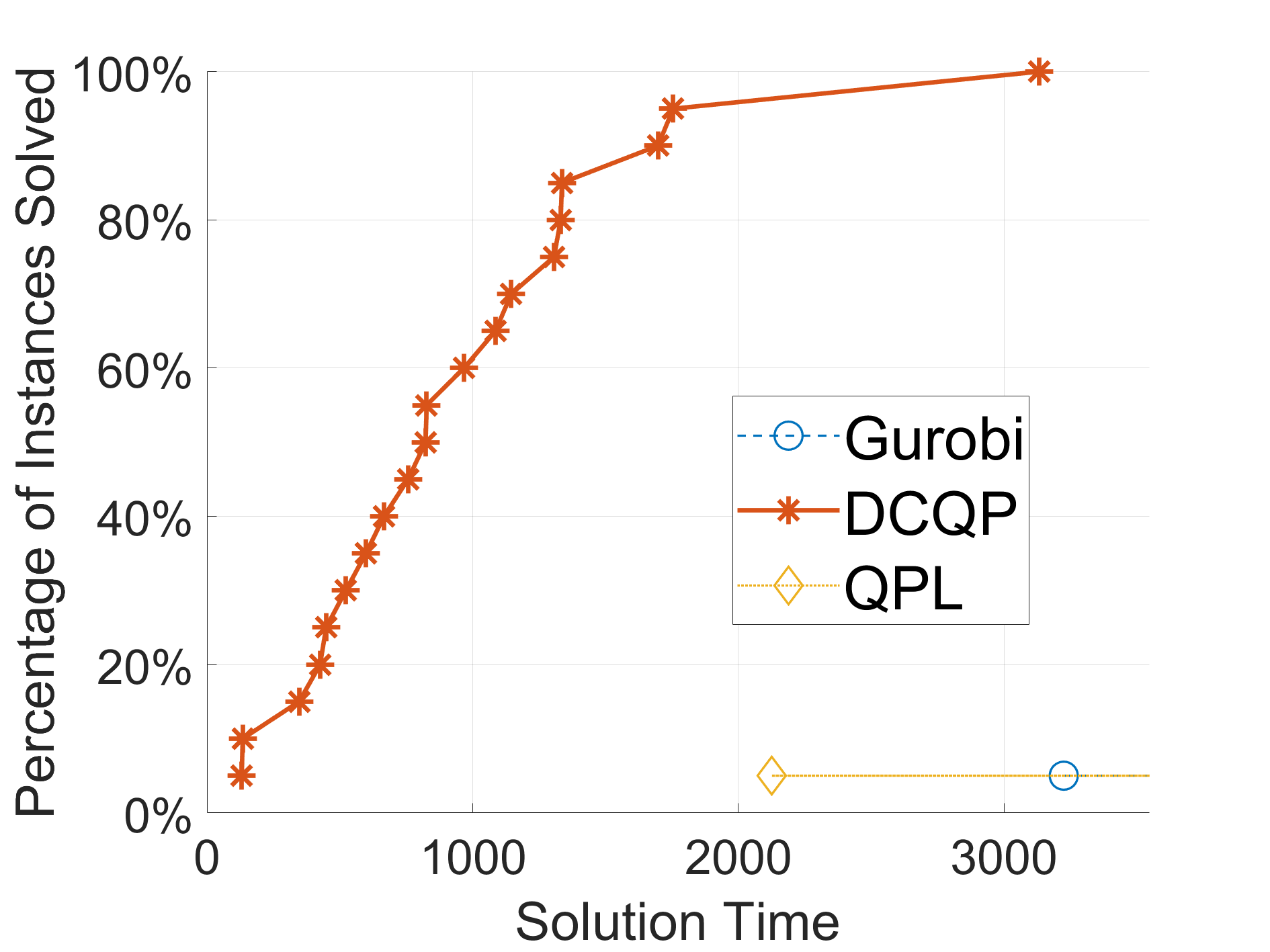}
        \caption{\texttt{qp\_n\_0\_9} group.}
   \label{subfig6:QC100-2}
    \end{subfigure}
    \caption{Comparison of the percentage of instances solved versus computational time, under a 3600-second time limit, across six groups of synthetic instances without equality constraints, each comprising 20 instances.}
\label{fig:comprandsynthetic}
\end{figure}

\subsubsection{Performance within 48 Hours}\label{sec:furthercomparison}

We have seen that our method \texttt{DCQP} can solve almost all the instances in the six groups within the time limit of one hour, while  \texttt{Gurobi} and \texttt{QPL} fail on many instances with moderately sparse and dense data. In this section, we extend the time limit to 48 hours to compare the convergence speed of the three methods on a few selected instances.

Specifically, we choose one instance for each group, where \texttt{Gurobi} and \texttt{QPL} did not converge within {3600 seconds}, and for which our \texttt{DCQP} has the longest runtime for all the instances in the group\footnote{For some groups, such instances may not exist.}. These instances are:
\begin{itemize}
  \item Instance 5 in the group \texttt{qp}\_\texttt{n}\_\texttt{0}\_\texttt{3};
  \item Instance 6  in the group \texttt{qp}\_\texttt{n}\_\texttt{0}\_\texttt{9};
  \item Instance 7 in the group
  \texttt{qp}\_\texttt{u}\_\texttt{0}\_\texttt{3};
  \item Instance 18 in the group
  \texttt{qp}\_\texttt{u}\_\texttt{0}\_\texttt{9};
\end{itemize}

 In~\Cref{fig:comp48h}, we present the plot of relative gap versus solution time (with a time limit of 48 hours) for the three methods applied to these four instances. The results clearly show that \texttt{Gurobi} and \texttt{QPL} often stagnate at relatively large gaps, failing to achieve the target accuracy of $10^{-4}$ within the allotted time. Notably, only for instance~7 in \texttt{qp\_u\_0\_3} does \texttt{Gurobi} succeed in reducing the relative gap below $10^{-4}$ within \textbf{48 hours}, but at the cost of a computational time approximately \textbf{20 times longer} than that required by \texttt{DCQP}.

In contrast, our \texttt{DCQP} consistently reduces the relative gap to the order of $10^{-4}$ for the four instances, with running times ranging from 121s to 3800s, thus achieving both faster convergence and significantly higher accuracy. These results underscore the robustness and efficiency of \texttt{DCQP} for solving nonconvex QP problems.

\begin{figure}[ht!]
    \centering
    \begin{subfigure}[t]{0.45\textwidth}
        \includegraphics[width=\textwidth]{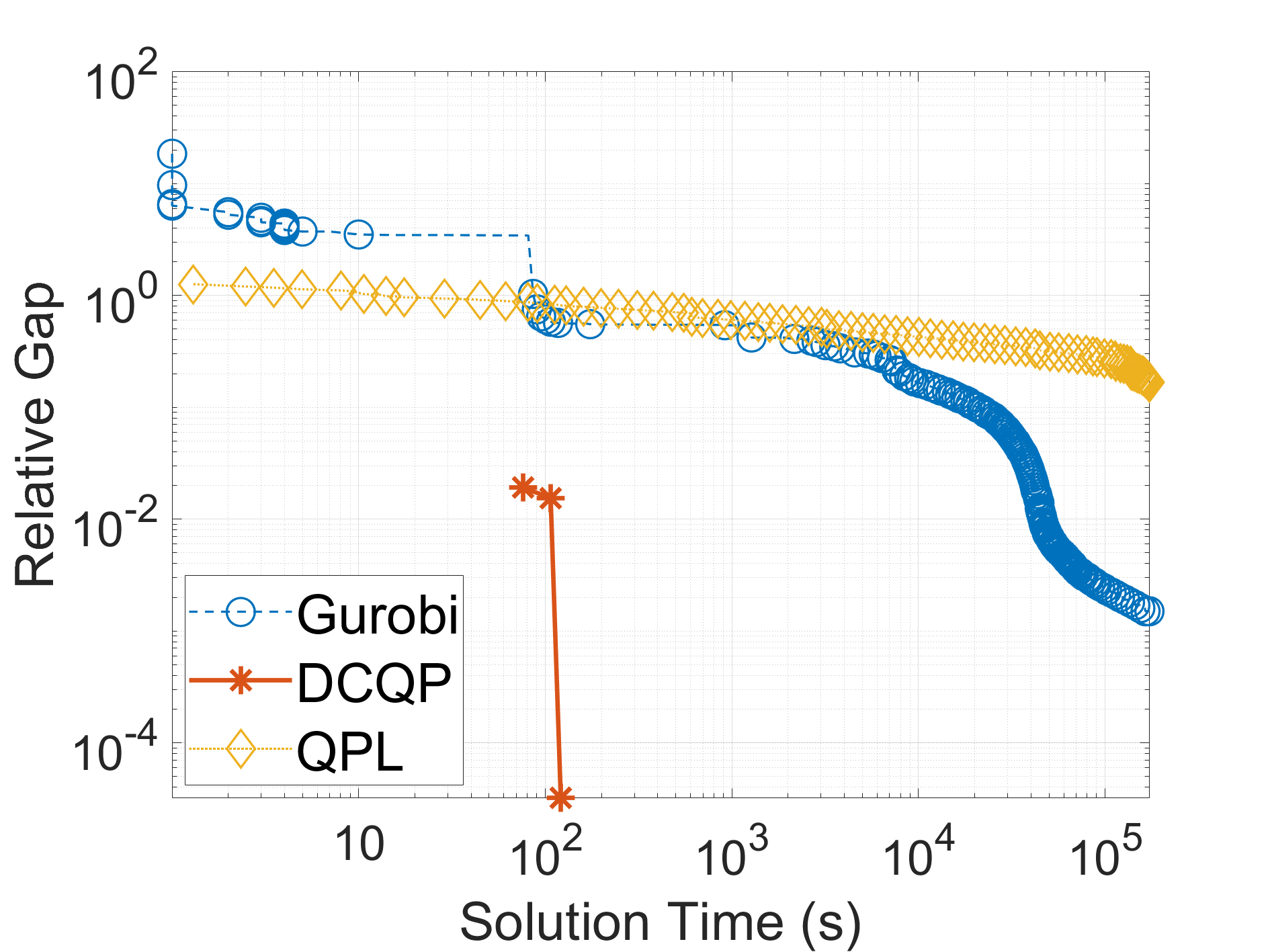}
        \caption{Instance 5 in \texttt{qp}\_\texttt{n}\_\texttt{0}\_\texttt{3}.}
  \label{subfig1:QC100-3}
    \end{subfigure}
    \begin{subfigure}[t]{0.45\textwidth}
        \includegraphics[width=\textwidth]{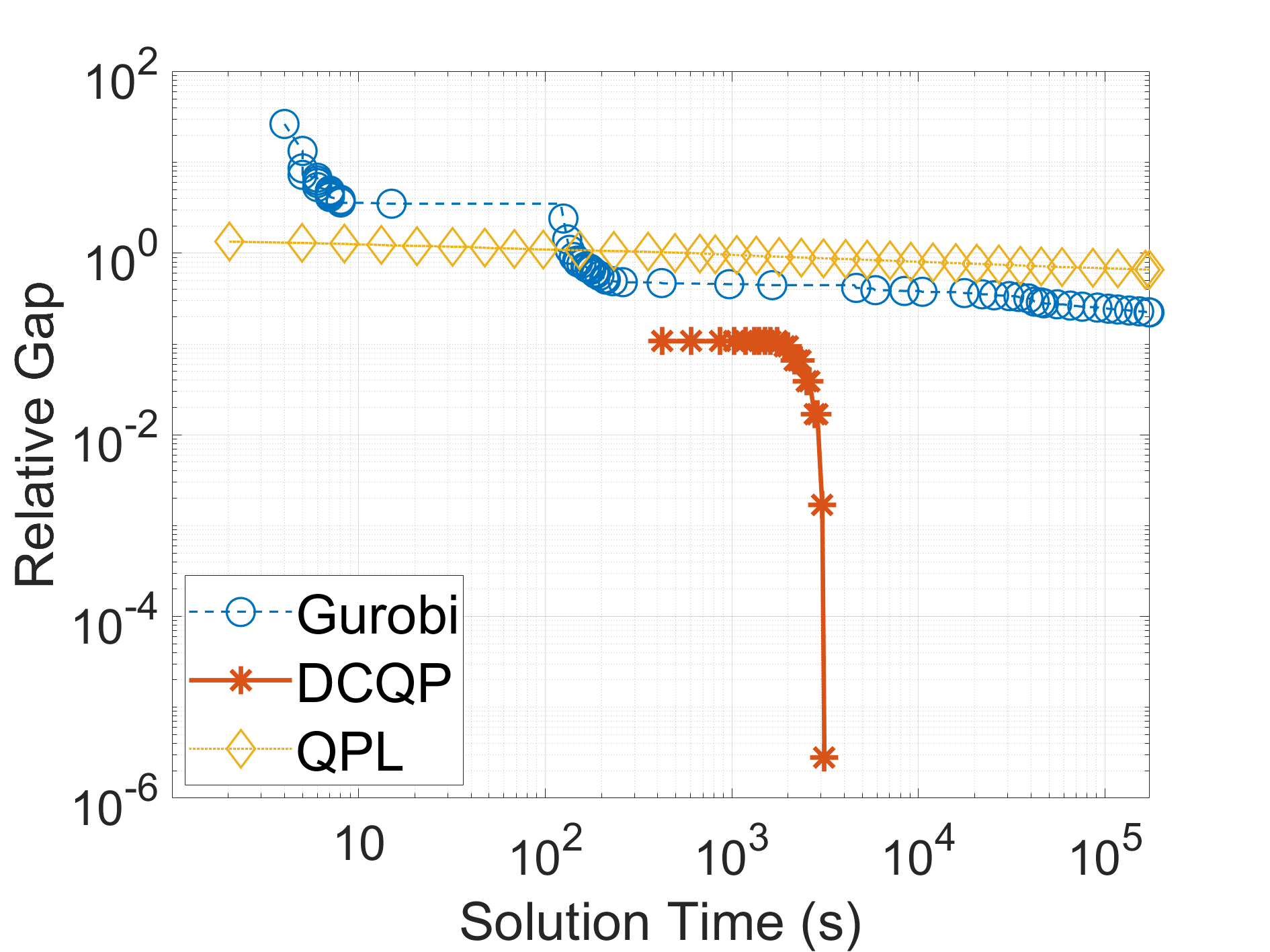}
        \caption{Instance 6 in \texttt{qp}\_\texttt{n}\_\texttt{0}\_\texttt{9}.}
    \label{subfig2:QC100-3}
    \end{subfigure}
    \hfill
    \begin{subfigure}[t]{0.45\textwidth}
        \includegraphics[width=\textwidth]{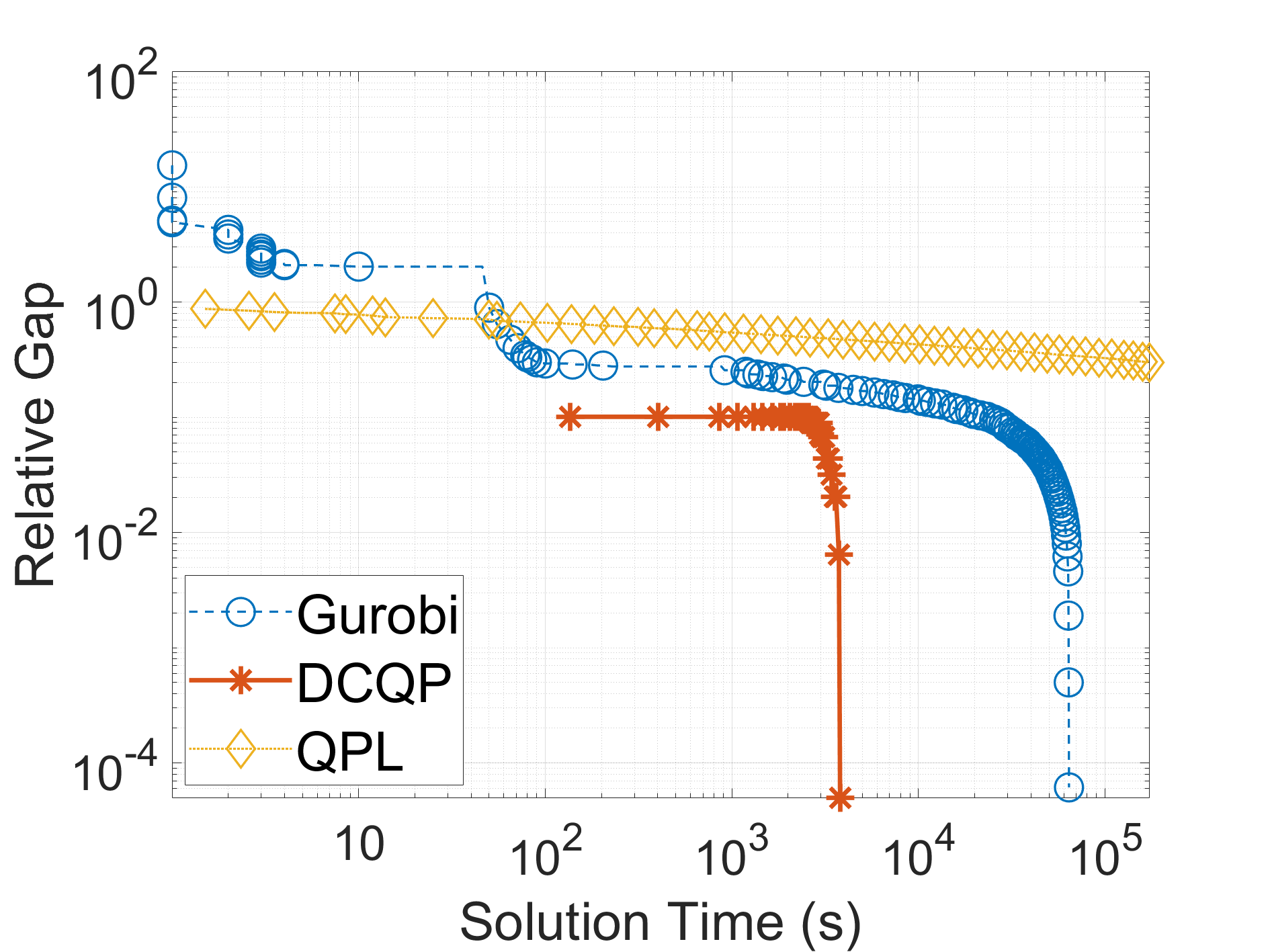}
        \caption{Instance 7 in \texttt{qp}\_\texttt{u}\_\texttt{0}\_\texttt{3}.}
   \label{subfig3:QC100-3}
    \end{subfigure}
    \begin{subfigure}[t]{0.45\textwidth}
        \includegraphics[width=\textwidth]{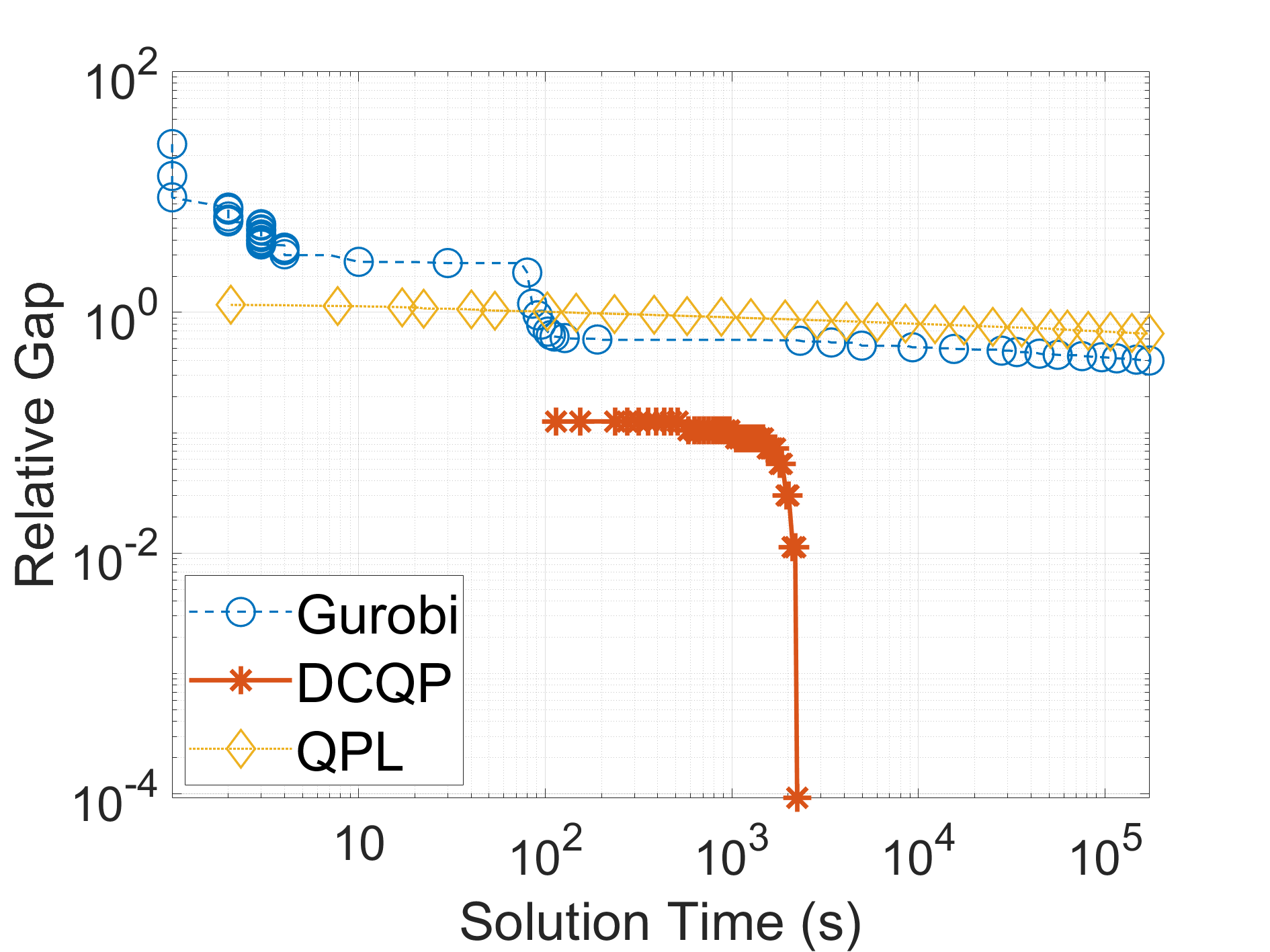}
        \caption{Instance 18 in \texttt{qp}\_\texttt{u}\_\texttt{0}\_\texttt{9}.}
   \label{subfig4:QC100-3}
    \end{subfigure}
    \caption{Comparison of the relative gap versus computational time for four selected instances under a 48-hour time limit.}
    \label{fig:comp48h}
\end{figure}

\subsection{Synthetic Instances with Equality Constraints}

In~\Cref{subsubsec:synnoe1h}, we observed that for sparse QP problems without equality constraints ($m_{eq}=0$), \texttt{Gurobi} and \texttt{QPL} are highly efficient and more advantageous than \texttt{DCQP}; additionally, the DNN bound appears to be mostly tight for such sparse instances. In this section, we show that the situation can completely change if we add some equality constraints while keeping the same sparsity level.

To illustrate this, we test the three methods on the group \texttt{qp\_u\_25\_1}, which has the lowest density level. Its generation follows exactly the same rule as for the group \texttt{qp\_u\_0\_1}, except that here we added 25 sparse equality constraints.

\subsubsection{Performance within One Hour}
We first examine the performance of the three methods for all instances within a time limit of 3600s. The percentage of instances in the group \texttt{qp\_u\_25\_1} solved by the three algorithms is plotted in~\Cref{fig:randomqp100_50_25_50_1_3600s}, and more computational details are given in~\Cref{table:randomqp100 50 25 50 1}. For this set of sparse QP problems containing equality constraints, neither \texttt{Gurobi} nor \texttt{QPL} succeeded in solving any case within the time limit of {3600 seconds}. In contrast, our \texttt{DCQP} successfully reduced the relative gap to $10^{-4}$ for 19 out of 20 instances in no more than \textbf{1500 seconds}.

These experiments indicate that, in the presence of equality constraints, both \texttt{Gurobi} and \texttt{QPL} may underperform on sparse instances, while our method consistently produces good results.

\begin{remark}\label{rem:instance11}
By examining~\Cref{table:randomqp100 50 25 50 1}, we find that instance~11 is the only case where our method, \texttt{DCQP}, did not solve successfully. On this instance, \texttt{DCQP} terminated after 432 seconds and returned a solution with a relative gap of $1.23 \times 10^{-4}$, which exceeds the prescribed threshold of $10^{-4}$. As discussed in~\Cref{rem:swrr}, the solution returned by~\Cref{alg:CuP-global} may occasionally fail to satisfy~\eqref{eq:relativgaplessepsiln} due to numerical errors in intermediate steps; this instance provides such an example. It is worth noting, however, that this is the only instance among all 204 tested where such behavior was observed.

\end{remark}
\begin{figure}[ht!]
    \centering
        \includegraphics[width=0.5\textwidth]{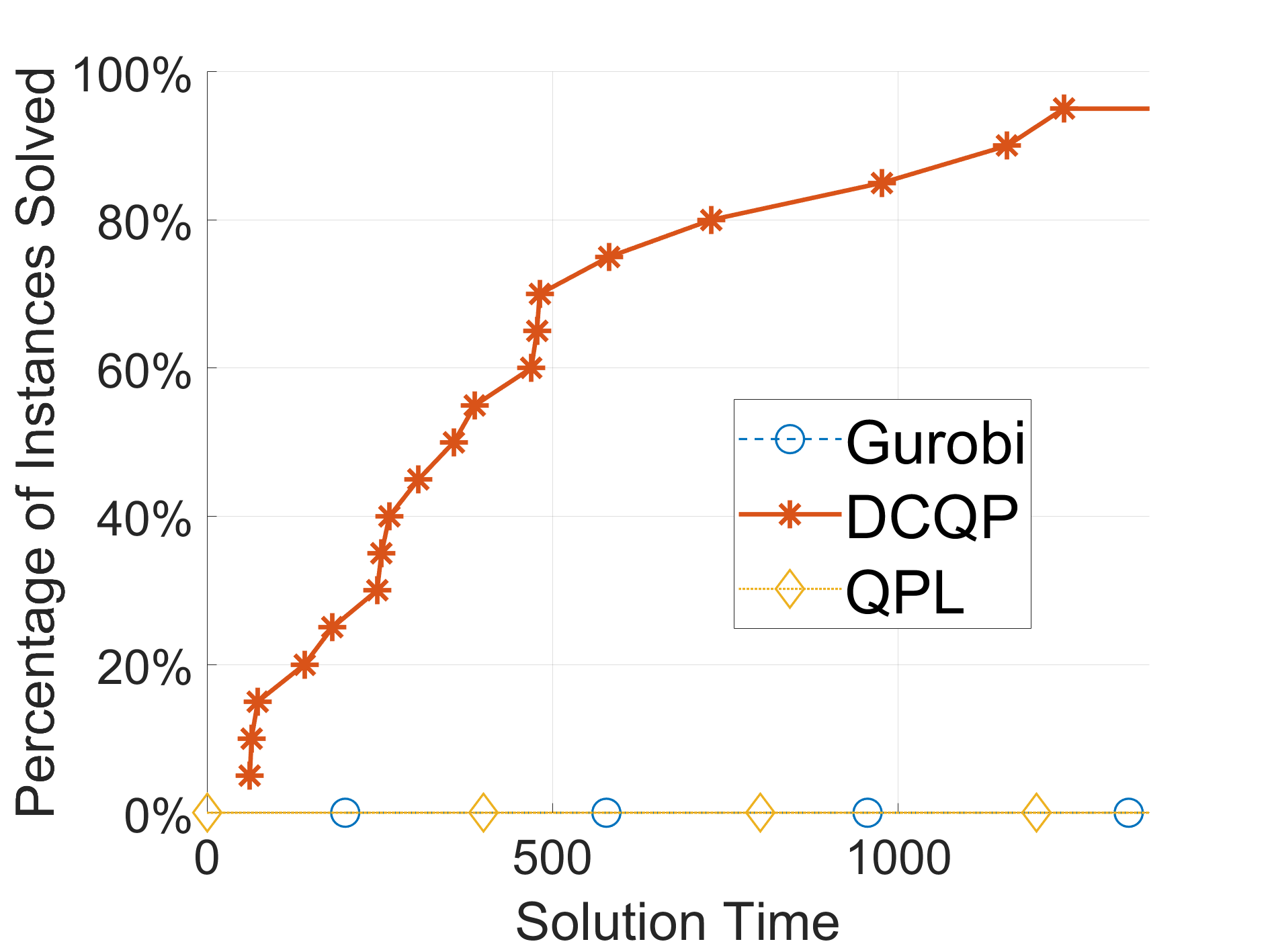}
        \caption{Comparison of the percentage of instances solved versus computational time, under a 3600-second time limit, for the 20 instances in the group \texttt{qp\_u\_25\_1}.}
        \label{fig:randomqp100_50_25_50_1_3600s}
\end{figure}

\subsubsection{Performance within 48 Hours}

We have extended the time limit to 48 hours for both \texttt{Gurobi} and \texttt{QPL} to evaluate their performance on instance 18 within the group \texttt{qp\_u\_25\_1}. As shown in~\Cref{fig:randomqp100_50_25_50_1_48h}, both \texttt{Gurobi} and \texttt{QPL} exhibit stagnation, with their relative gaps decreasing very slowly. Even after 48 hours, \texttt{Gurobi}'s relative gap remains around 1, while \texttt{QPL}'s is approximately 0.1. In contrast, our method successfully reduces the gap to $10^{-4}$ within just 1240 seconds.

\begin{figure}[ht!]
    \centering
        \includegraphics[width=0.5\textwidth]{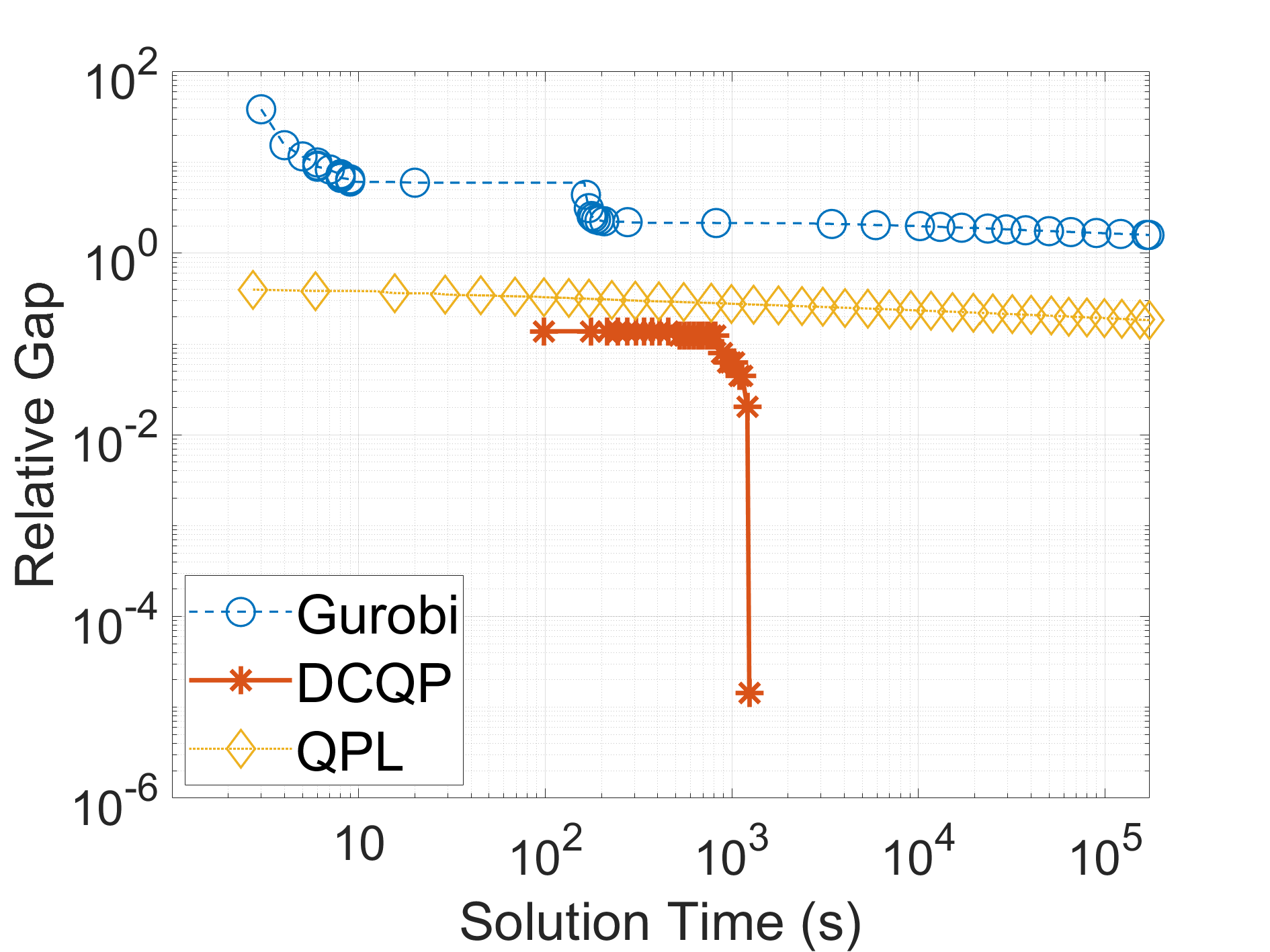}
        \caption{Comparison of the relative gap versus computational time for instance 18 in the group \texttt{qp\_u\_25\_3}.}
    \label{fig:randomqp100_50_25_50_1_48h}
\end{figure}

\section{Conclusion}\label{sec:conclusion}

In this paper, we introduced a new cutting plane framework for globally solving nonconvex QP problems. By developing both the theoretical foundation and practical algorithms for generating valid cuts via SDP relaxations, we addressed key challenges in tightening relaxation bounds and ensuring global convergence. Our finite-terminating local search method guarantees the identification of suitable KKT points for cut generation, and the overall algorithm achieves high-precision solutions across a wide range of problem instances.

Comprehensive computational results demonstrate that our approach consistently outperforms state-of-the-art commercial and academic QP solvers, especially on large-scale or difficult instances where existing methods often struggle. Notably, our algorithm is particularly effective in the presence of equality constraints or denser problem data, settings in which standard solvers may fail to deliver accurate solutions within practical time limits.

This work opens several avenues for future research, including the rigorous convergence analysis of the proposed cutting plane framework, the extension to more general classes of nonconvex quadratic or polynomial optimization problems, and the integration of advanced numerical techniques to further improve computational efficiency. Our results underscore the potential of SDP-based cutting plane methods as a powerful tool for global optimization in nonconvex quadratic programming.

\begin{table}[htbp!]
    \centering
    {\scriptsize
    \caption{Comparison of \texttt{DCQP}, \texttt{Gurobi}, and \texttt{QPL} on the 64 benchmark instances. The columns "{$\mathrm{Gap}_{\mathrm{initial}}$}", "{$\mathrm{Gap}_{\mathrm{final}}$}", and "{No. of cuts}" denote the initial relative gap, final relative gap, and the number of cuts added in \texttt{DCQP}, respectively. A hyphen ("-") indicates that the gap is less than $10^{-4}$. The shortest wall-clock time in each row is highlighted in bold font. Time is measured in seconds.}
    \label{table:randqp}
    \begin{tabular}{c|cccccccc}
    \toprule
&   \multicolumn{4}{c}{\texttt{DCQP}} & \multicolumn{2}{c}{\texttt{Gurobi}} & \multicolumn{2}{c}{\texttt{QPL}} \\
    \cmidrule(l{1pt}r{2pt}){2-5}
    \cmidrule(l{2pt}r{2pt}){6-7}
    \cmidrule(l{2pt}r{2pt}){8-9}
    Instance & $\mathrm{Gap}_{\mathrm{initial}}$ & No. of cuts & $\mathrm{Gap}_{\mathrm{final}}$ & Time & $\mathrm{Gap}_{\mathrm{final}}$ & Time & $\mathrm{Gap}_{\mathrm{final}}$ & Time  \\
\midrule
\texttt{qp20\_10\_1\_1} & - & 0 & - & 0.235 & - & \textbf{0.101} & - & 3.53 \\
\texttt{qp20\_10\_1\_2} & - & 0 & - & \textbf{0.195} & - & 0.62 & - & 4.36 \\
\texttt{qp20\_10\_1\_3} & 0.0488 & 3 & - & 0.868 & - & \textbf{0.199} & - & 0.69 \\
\texttt{qp20\_10\_1\_4} & 0.0587 & 1 & - & 0.471 & - & \textbf{0.255} & - & 0.93 \\
\texttt{qp20\_10\_2\_1} & - & 0 & - & 0.171 & - & \textbf{0.135} & - & 0.4 \\
\texttt{qp20\_10\_2\_2} & - & 0 & - & \textbf{0.189} & - & 1.45 & - & 6.2 \\
\texttt{qp20\_10\_2\_3} & - & 0 & - & \textbf{0.174} & - & 0.328 & - & 0.37 \\
\texttt{qp20\_10\_2\_4} & - & 0 & - & \textbf{0.163} & - & 0.284 & - & 2.6 \\
\texttt{qp20\_10\_3\_1} & 0.0221 & 3 & - & 0.907 & - & \textbf{0.215} & - & 2.04 \\
\texttt{qp20\_10\_3\_2} & - & 0 & - & \textbf{0.161} & - & 0.165 & - & 1.06 \\
\texttt{qp20\_10\_3\_3} & - & 0 & - & \textbf{0.148} & - & 0.262 & - & 0.58 \\
\texttt{qp20\_10\_3\_4} & - & 0 & - & \textbf{0.157} & - & 0.274 & - & 0.57 \\
\texttt{qp20\_10\_4\_1} & - & 0 & - & \textbf{0.178} & - & 0.32 & - & 1.75 \\
\texttt{qp20\_10\_4\_2} & - & 0 & - & 0.182 & - & 2.75 & - & \textbf{0.11} \\
\texttt{qp20\_10\_4\_3} & - & 0 & - & \textbf{0.169} & - & 0.378 & - & 0.51 \\
\texttt{qp20\_10\_4\_4} & - & 0 & - & \textbf{0.174} & - & 0.571 & - & 0.66 \\
\midrule
\texttt{qp30\_15\_1\_1} & - & 0 & - & 0.37 & - & 0.0732 & - & \textbf{0.02} \\
\texttt{qp30\_15\_1\_2} & 0.0016 & 1 & - & 0.85 & - & \textbf{0.256} & - & 1.91 \\
\texttt{qp30\_15\_1\_3} & - & 0 & - & \textbf{0.289} & - & 0.403 & - & 0.77 \\
\texttt{qp30\_15\_1\_4} & - & 0 & - & 0.315 & - & 0.679 & - & \textbf{0.14} \\
\texttt{qp30\_15\_2\_1} & - & 0 & - & \textbf{0.274} & - & 0.283 & - & 0.99 \\
\texttt{qp30\_15\_2\_2} & 0.0267 & 2 & - & \textbf{1.3} & - & 10.2 & - & 7.35 \\
\texttt{qp30\_15\_2\_3} & 0.3001 & 3 & - & \textbf{1.91} & - & 3.56 & - & 2.1 \\
\texttt{qp30\_15\_2\_4} & - & 0 & - & \textbf{0.306} & - & 0.587 & - & 12.9 \\
\texttt{qp30\_15\_3\_1} & 0.0441 & 3 & - & 1.65 & - & \textbf{0.859} & - & 6.04 \\
\texttt{qp30\_15\_3\_2} & - & 0 & - & \textbf{0.251} & - & 0.765 & - & 1.46 \\
\texttt{qp30\_15\_3\_3} & - & 0 & - & \textbf{0.297} & - & 50.6 & - & 0.33 \\
\texttt{qp30\_15\_3\_4} & - & 0 & - & \textbf{0.277} & - & 0.389 & - & 1.11 \\
\texttt{qp30\_15\_4\_1} & - & 0 & - & \textbf{0.268} & - & 1.96 & - & 0.54 \\
\texttt{qp30\_15\_4\_2} & 0.0128 & 1 & - & \textbf{0.946} & - & 1.22 & - & 1.8 \\
\texttt{qp30\_15\_4\_3} & - & 0 & - & \textbf{0.287} & - & 5.08 & - & 3.2 \\
\texttt{qp30\_15\_4\_4} & - & 0 & - & \textbf{0.278} & - & 0.657 & - & 1.11 \\
\midrule
\texttt{qp40\_20\_1\_1} & 0.0006 & 1 & - & 1.6 & - & \textbf{0.329} & - & 2.28 \\
\texttt{qp40\_20\_1\_2} & - & 0 & - & \textbf{0.557} & - & 0.638 & - & 42.1 \\
\texttt{qp40\_20\_1\_3} & 0.0084 & 1 & - & 1.65 & - & \textbf{0.393} & - & 1.97 \\
\texttt{qp40\_20\_1\_4} & - & 0 & - & \textbf{0.593} & - & 6.1 & - & 4.69 \\
\texttt{qp40\_20\_2\_1} & - & 0 & - & 0.527 & - & \textbf{0.456} & - & 0.63 \\
\texttt{qp40\_20\_2\_2} & - & 0 & - & \textbf{0.539} & - & 1490 & - & 10.6 \\
\texttt{qp40\_20\_2\_3} & 0.174 & 5 & - & \textbf{9.4} & - & 12.5 & - & 23.1 \\
\texttt{qp40\_20\_2\_4} & - & 0 & - & 0.659 & - & 0.0696 & - & \textbf{0.01} \\
\texttt{qp40\_20\_3\_1} & - & 0 & - & \textbf{0.449} & - & 1.06 & - & 1.99 \\
\texttt{qp40\_20\_3\_2} & - & 0 & - & \textbf{0.531} & - & 55.8 & - & 3.85 \\
\texttt{qp40\_20\_3\_3} & - & 0 & - & \textbf{0.544} & - & 2.51 & - & 1.42 \\
\texttt{qp40\_20\_3\_4} & - & 0 & - & \textbf{0.599} & - & 2.37 & - & 3.61 \\
\texttt{qp40\_20\_4\_1} & - & 0 & - & \textbf{0.555} & - & 37.2 & - & 7.81 \\
\texttt{qp40\_20\_4\_2} & 0.1039 & 7 & - & 8.23 & - & 8.63 & - & \textbf{6.56} \\
\texttt{qp40\_20\_4\_3} & - & 0 & - & \textbf{0.646} & - & 23.8 & - & 110 \\
\texttt{qp40\_20\_4\_4} & - & 0 & - & \textbf{0.622} & - & 18.5 & - & 15.2 \\
\midrule
\texttt{qp50\_25\_1\_1} & 0.0014 & 1 & - & 2.98 & - & \textbf{0.482} & - & 4.76 \\
\texttt{qp50\_25\_1\_2} & - & 0 & - & \textbf{0.909} & - & 2.95 & - & 5.64 \\
\texttt{qp50\_25\_1\_3} & 0.0082 & 2 & - & \textbf{11.4} & - & 35.3 & - & 42.8 \\
\texttt{qp50\_25\_1\_4} & - & 0 & - & \textbf{1.24} & - & 4.02 & - & 7.82 \\
\texttt{qp50\_25\_2\_1} & - & 0 & - & 0.94 & - & 0.0689 & - & \textbf{0.02} \\
\texttt{qp50\_25\_2\_2} & 0.0061 & 1 & - & \textbf{3.38} & - & 4.55 & - & 12.5 \\
\texttt{qp50\_25\_2\_3} & - & 0 & - & \textbf{1.1} & - & 8.38 & - & 2.86 \\
\texttt{qp50\_25\_2\_4} & - & 0 & - & \textbf{1.08} & - & 7.24 & - & 3.39 \\
\texttt{qp50\_25\_3\_1} & - & 0 & - & \textbf{1} & - & 2.45 & - & 73 \\
\texttt{qp50\_25\_3\_2} & - & 0 & - & \textbf{0.879} & - & 1.24 & - & 1.39 \\
\texttt{qp50\_25\_3\_3} & - & 0 & - & \textbf{1.03} & - & 86.1 & - & 1050 \\
\texttt{qp50\_25\_3\_4} & - & 0 & - & \textbf{1.22} & - & 26.1 & - & 3.45 \\
\texttt{qp50\_25\_4\_1} & - & 0 & - & \textbf{0.966} & - & 22.6 & - & 5.49 \\
\texttt{qp50\_25\_4\_2} & - & 0 & - & \textbf{1.05} & - & 8.95 & - & 7.42 \\
\texttt{qp50\_25\_4\_3} & 0.1213 & 13 & - & \textbf{30.2} & - & 42.3 & - & 65.9 \\
\texttt{qp50\_25\_4\_4} & - & 0 & - & \textbf{1.21} & - & 6.19 & - & 2.9 \\
\bottomrule
    \end{tabular}}
\end{table}

\begin{table}[htbp]
    \centering
    {\scriptsize
    \caption{Comparison of \texttt{DCQP}, \texttt{Gurobi} and \texttt{QPL} on  \texttt{qp}\_\texttt{n}\_\texttt{0}\_\texttt{1}, \texttt{qp}\_\texttt{n}\_\texttt{0}\_\texttt{3}, and \texttt{qp}\_\texttt{n}\_\texttt{0}\_\texttt{9} instances. 
   See the caption of~\Cref{table:randqp} for notation and formatting details.}
    \label{table:merged-normalrandomqp}
    \begin{tabular}{cccccccccc}
    \toprule
  \multirow{2}{*}{Group}  & \multirow{2}{*}{Index} & \multicolumn{4}{c}{\texttt{DCQP}} & \multicolumn{2}{c}{\texttt{Gurobi}} & \multicolumn{2}{c}{\texttt{QPL}} \\
    \cmidrule(l{1pt}r{2pt}){3-6}
    \cmidrule(l{2pt}r{2pt}){7-8}
    \cmidrule(l{2pt}r{2pt}){9-10}
      &   & $\mathrm{Gap}_{\mathrm{initial}}$ & No. of cuts & $\mathrm{Gap}_{\mathrm{final}}$ & Time & $\mathrm{Gap}_{\mathrm{final}}$ & Time & $\mathrm{Gap}_{\mathrm{final}}$ & Time  \\
    \midrule
    \multirow{20}{*}{\texttt{n}\_\texttt{0}\_\texttt{1}}
    & 1  & - & 0 & - & 15   & - & \textbf{2} & - & 5.03 \\
    & 2  & - & 0 & - & 27.5 & - & 3.74 & - & \textbf{1.97} \\
    & 3  & - & 0 & - & 28.5 & - & \textbf{1.85} & - & 1.86 \\
    & 4  & - & 0 & - & 28.6 & - & 1.79 & - & \textbf{1.66} \\
    & 5  & - & 0 & - & 26.7 & - & 1.96 & - & \textbf{1.59} \\
    & 6  & - & 0 & - & 28.6 & - & 5.11 & - & \textbf{1.99} \\
    & 7  & - & 0 & - & 27.8 & - & 2.28 & - & \textbf{2.26} \\
    & 8  & - & 0 & - & 28.6 & - & 1.60 & - & \textbf{1.54} \\
    & 9  & - & 0 & - & 26.6 & - & 2.00 & - & \textbf{1.38} \\
    & 10 & - & 0 & - & 26.3 & - & 2.05 & - & \textbf{1.48} \\
    & 11 & - & 0 & - & 31.3 & - & 4.08 & - & \textbf{3.11} \\
    & 12 & - & 0 & - & 33.0 & - & 4.40 & - & \textbf{4.17} \\
    & 13 & - & 0 & - & 31.1 & - & \textbf{1.96} & - & 2.51 \\
    & 14 & - & 0 & - & 27.9 & - & \textbf{2.12} & - & 2.62 \\
    & 15 & - & 0 & - & 26.4 & - & \textbf{2.28} & - & 2.81 \\
    & 16 & - & 0 & - & 28.2 & - & 3.65 & - & \textbf{3.35} \\
    & 17 & - & 0 & - & 27.2 & - & \textbf{1.94} & - & 3.05 \\
    & 18 & - & 0 & - & 27.1 & - & \textbf{1.82} & - & 3.05 \\
    & 19 & - & 0 & - & 30.9 & - & \textbf{1.92} & - & 4.20 \\
    & 20 & - & 0 & - & 27.4 & - & \textbf{1.79} & - & 2.59 \\
    \midrule
    \multirow{20}{*}{\texttt{n}\_\texttt{0}\_\texttt{3}}
    & 1  & -        & 0 & - & \textbf{32}   & -       & 846  & - & 61.7 \\
    & 2  & 0.0005 & 1 & - & \textbf{80.1} & 0.0016  & 3600 & 0.07 & 3600 \\
    & 3  & -        & 0 & - & \textbf{35}   & -       & 494  & - & 113 \\
    & 4  & -        & 0 & - & \textbf{34.5} & 0.0007  & 3600 & - & 744 \\
    & 5  & 0.0192 & 2 & - & \textbf{121}  & 0.3511  & 3600 & 0.49 & 3600 \\
    & 6  & -        & 0 & - & 33.3          & -       & 284  & - & \textbf{25.3} \\
    & 7  & -        & 0 & - & \textbf{35.5} & -       & 999  & - & 2000 \\
    & 8  & -        & 0 & - & \textbf{41.6} & 0.0463  & 3600 & 0.32 & 3600 \\
    & 9  & -        & 0 & - & \textbf{35.2} & -       & 1230 & - & 1030 \\
    & 10 & -        & 0 & - & \textbf{34.2} & -       & 378  & - & 36.6 \\
    & 11 & 0.0003 & 1 & - & \textbf{89}   & 0.0123  & 3600 & 0.20 & 3600 \\
    & 12 & 0.0008 & 1 & - & 72.5          & -       & 271  & - & \textbf{18.9} \\
    & 13 & 0.0002 & 1 & - & \textbf{63.8} & -       & 1170 & - & 280 \\
    & 14 & -        & 0 & - & \textbf{35.6} & -       & 581  & - & 66.9 \\
    & 15 & -        & 0 & - & \textbf{38.9} & 0.0014  & 3600 & - & 2630 \\
    & 16 & 0.0057 & 2 & - & 112           & -       & 400  & - & \textbf{109} \\
    & 17 & -        & 0 & - & 30.5          & -       & 448  & - & \textbf{26.4} \\
    & 18 & -        & 0 & - & \textbf{31.2} & -       & 1710 & - & 122 \\
    & 19 & 0.0197 & 2 & - & \textbf{134}  & -       & 1090 & - & 497 \\
    & 20 & -        & 0 & - & \textbf{35.7} & -       & 727  & - & 1160 \\
    \midrule
    \multirow{20}{*}{\texttt{n}\_\texttt{0}\_\texttt{9}}
    & 1  & 0.0353 & 3 & - & \textbf{448}  & 0.1831 & 3600 & 0.42 & 3600 \\
    & 2  & 0.0471 & 3 & - & \textbf{666}  & 0.2402 & 3600 & 0.46 & 3600 \\
    & 3  & 0.0027 & 2 & - & \textbf{757}  & 0.155 & 3600 & 0.37 & 3600 \\
    & 4  & 0.0121 & 3 & - & \textbf{1140} & 0.1504 & 3600 & 0.41 & 3600 \\
    & 5  & 0.0862 & 8 & - & \textbf{1340} & 0.3448 & 3600 & 0.67 & 3600 \\
    & 6  & 0.1072 & 20& - & \textbf{3130} & 0.4448 & 3600 & 0.9 & 3600 \\
    & 7  & 0.003 & 2 & - & \textbf{823}  & 0.0168 & 3600 & 0.2 & 3600 \\
    & 8  & 0.0722 & 7 & - & \textbf{1310} & 0.2461 & 3600 & 0.51 & 3600 \\
    & 9  & -        & 0 & - & \textbf{134}  & -    & 3220 & -   & 2130 \\
    & 10 & 0.0782 & 10& - & \textbf{1750} & 0.2882 & 3600 & 0.65 & 3600 \\
    & 11 & 0.0518 & 6 & - & \textbf{1090} & 0.3042 & 3600 & 0.72 & 3600 \\
    & 12 & -        & 0 & - & \textbf{131}  & 0.1287 & 3600 & 0.29 & 3600 \\
    & 13 & 0.1392 & 4 & - & \textbf{827}  & 0.2159 & 3600 & 0.62 & 3600 \\
    & 14 & 0.0158  & 2 & - & \textbf{598}  & 0.1742 & 3600 & 0.43 & 3600 \\
    & 15 & 0.009 & 3 & - & \textbf{967}  & 0.1386 & 3600 & 0.38 & 3600 \\
    & 16 & 0.0545 & 5 & - & \textbf{1330} & 0.2634 & 3600 & 0.59 & 3600 \\
    & 17 & 0.0673 & 8 & - & \textbf{1700} & 0.3363 & 3600 & 0.58 & 3600 \\
    & 18 & 0.0016 & 1 & - & \textbf{347}  & 0.1565 & 3600 & 0.45 & 3600 \\
    & 19 & 0.0266 & 2 & - & \textbf{523}  & 0.3057 & 3600 & 0.55 & 3600 \\
    & 20 & 0.0013 & 1 & - & \textbf{426}  & 0.1166 & 3600 & 0.5 & 3600 \\
    \bottomrule
    \end{tabular}}
\end{table}

\begin{table}[htbp]
    \centering
    {\scriptsize
    \caption{Comparison of \texttt{DCQP}, \texttt{Gurobi} and \texttt{QPL} on different \texttt{qp}\_\texttt{u}\_\texttt{0}\_\texttt{1}, \texttt{qp}\_\texttt{u}\_\texttt{0}\_\texttt{3}, and \texttt{qp}\_\texttt{u}\_\texttt{0}\_\texttt{9} instances. See the caption of~\Cref{table:randqp} for notation and formatting details.}
    \label{table:merged-randomqp}
    \begin{tabular}{cccccccccc}
    \toprule
  \multirow{2}{*}{Group}  & \multirow{2}{*}{Index} & \multicolumn{4}{c}{\texttt{DCQP}} & \multicolumn{2}{c}{\texttt{Gurobi}} & \multicolumn{2}{c}{\texttt{QPL}} \\
    \cmidrule(l{1pt}r{2pt}){3-6}
    \cmidrule(l{2pt}r{2pt}){7-8}
    \cmidrule(l{2pt}r{2pt}){9-10}
      &  & $\mathrm{Gap}_{\mathrm{initial}}$ & No. of cuts & $\mathrm{Gap}_{\mathrm{final}}$ & Time & $\mathrm{Gap}_{\mathrm{final}}$ & Time & $\mathrm{Gap}_{\mathrm{final}}$ & Time  \\
    \midrule
    \multirow{20}{*}{\texttt{u}\_\texttt{0}\_\texttt{1}}&
1 & 0.0008 & 1 & - & \textbf{68.5} & - & 2190 & - & 69.4 \\
& 2 & 0.0021 & 1 & - & \textbf{164} & - & 243 & - & 208 \\
& 3 & - & 0 & - & 49.2 & - & 91.5 & - & \textbf{4.54} \\
& 4 & - & 0 & - & 45.8 & - & 120 & - & \textbf{5.95} \\
& 5 & - & 0 & - & 48 & - & 133 & - & \textbf{9.21} \\
& 6 & - & 0 & - & 38.5 & - & 77.5 & - & \textbf{3.69} \\
& 7 & - & 0 & - & 45.5 & - & 19.5 & - & \textbf{5.22} \\
& 8 & - & 0 & - & \textbf{45.8} & - & 130 & - & 91 \\
& 9 & - & 0 & - & 47.8 & - & 12 & - & \textbf{4.35} \\
& 10 & - & 0 & - & 42.7 & - & 98.8 & - & \textbf{4.48} \\
& 11 & - & 0 & - & 45.9 & - & 95.5 & - & \textbf{6.72} \\
& 12 & - & 0 & - & 46.2 & - & 23.9 & - & \textbf{3.63} \\
& 13 & - & 0 & - & 45.2 & - & 25.6 & - & \textbf{3.93} \\
& 14 & - & 0 & - & 38.6 & - & 16.5 & - & \textbf{3.06} \\
& 15 & 0.0006 & 1 & - & 167 & - & 186 & - & \textbf{22.9} \\
& 16 & - & 0 & - & 37.8 & - & 77.5 & - & \textbf{3.96} \\
& 17 & - & 0 & - & 42.2 & - & 74.8 & - & \textbf{4.27} \\
& 18 & - & 0 & - & 45.2 & - & 269 & - & \textbf{35} \\
& 19 & - & 0 & - & 42.3 & - & 18 & - & \textbf{2.34} \\
& 20 & - & 0 & - & 43.9 & - & 24.3 & - & \textbf{3.62} \\
    \midrule
    \multirow{20}{*}{\texttt{u}\_\texttt{0}\_\texttt{3}}
&1 & 0.0106 & 2 & - & \textbf{307} & 0.038 & 3600 & 0.4 & 3600 \\
&2 & - & 0 & - & \textbf{47.3} & - & 392 & - & 816 \\
&3 & 0.0103 & 2 & - & \textbf{196} & - & 3430 & 0.4 & 3600 \\
&4 & 0.0238 & 3 & - & \textbf{204} & 0.175 & 3600 & 0.4 & 3600 \\
&5 & - & 0 & - & \textbf{52} & - & 190 & - & 447 \\
&6 & 0.1583 & 12 & - & \textbf{644} & 0.195 & 3600 & 0.5 & 3600 \\
&7 & 0.1895 & 38 & 0.02 & {3600} & 0.163 & {3600} & 0.5 & {3600} \\
&8 & 0.0358 & 8 & - & \textbf{802} & 0.18 & 3600 & 0.4 & 3600 \\
&9 & 0.0027 & 1 & - & \textbf{254} & - & 296 & - & 481 \\
&10 & 0.0012 & 1 & - & \textbf{362} & - & 3550 & 0.2 & 3600 \\
&11 & 0.0055 & 2 & - & \textbf{463} & - & 2230 & 0.2 & 3600 \\
&12 & 0.0046 & 1 & - & \textbf{355} & 0.014 & 3600 & 0.4 & 3600 \\
&13 & - & 0 & - & \textbf{95.3} & - & 1930 & 0.3 & 3600 \\
&14 & 0.0114 & 2 & - & \textbf{457} & 0.065 & 3600 & 0.4 & 3600 \\
&15 & 0.0262 & 2 & - & \textbf{443} & - & 1840 & 0.4 & 3600 \\
&16 & 0.0118 & 2 & - & \textbf{481} & 0.002 & 3600 & 0.4 & 3600 \\
&17 & 0.1175 & 8 & - & \textbf{771} & 0.201 & 3600 & 0.6 & 3600 \\
&18 & 0.004 & 1 & - & \textbf{129} & 0.067 & 3600 & 0.2 & 3600 \\
&19 & - & 0 & - & \textbf{44.2} & - & 2190 & 0.2 & 3600 \\
&20 & 0.0103 & 1 & - & \textbf{154} & - & 2820 & 0.2 & 3600 \\
    \midrule
    \multirow{20}{*}{\texttt{u}\_\texttt{0}\_\texttt{9}}
& 1 & - & 0 & - & \textbf{37.4} & 0.123 & 3600 & 0.4 & 3600 \\
&2 & 0.0672 & 10 & - & \textbf{776} & 0.246 & 3600 & 0.5 & 3600 \\
&3 & 0.1666 & 18 & - & \textbf{1090} & 0.486 & 3600 & 0.5 & 3600 \\
&4 & 0.3822 & 18 & - & \textbf{1050} & 0.549 & 3600 & 0.7 & 3600 \\
&5 & 0.0479 & 6 & - & \textbf{527} & 0.426 & 3600 & 0.7 & 3600 \\
&6 & 0.0138 & 2 & - & \textbf{343} & 0.187 & 3600 & 0.5 & 3600 \\
&7 & 0.2859 & 11 & - & \textbf{743} & 0.711 & 3600 & 0.7 & 3600 \\
&8 & - & 0 & - & \textbf{79.3} & 0.055 & 3600 & 0.3 & 3600 \\
&9 & 0.0034 & 1 & - & \textbf{195} & - & 3250 & 0.3 & 3600 \\
&10 & 0.0308 & 3 & - & \textbf{370} & 0.277 & 3600 & 0.6 & 3600 \\
&11 & 0.0127 & 2 & - & \textbf{277} & 0.259 & 3600 & 0.6 & 3600 \\
&12 & 0.0794 & 10 & - & \textbf{1060} & 0.255 & 3600 & 0.5 & 3600 \\
&13 & 0.0228 & 3 & - & \textbf{392} & 0.179 & 3600 & 0.4 & 3600 \\
&14 & 0.1236 & 21 & - & \textbf{1190} & 0.467 & 3600 & 0.9 & 3600 \\
&15 & 0.1257 & 25 & - & \textbf{1760} & 0.363 & 3600 & 0.7 & 3600 \\
&16 & 0.0077 & 1 & - & \textbf{193} & 0.169 & 3600 & 0.4 & 3600 \\
&17 & 0.0023 & 1 & - & \textbf{233} & - & 3160 & 0.3 & 3600 \\
&18 & 0.1586 & 40 & - & \textbf{2240} & 0.586 & 3600 & 0.9 & 3600 \\
&19 & 0.0364 & 5 & - & \textbf{635} & 0.274 & 3600 & 0.5 & 3600 \\
&20 & 0.0359 & 3 & - & \textbf{335} & 0.272 & 3600 & 0.5 & 3600 \\
    \bottomrule
    \end{tabular}}
\end{table}

\begin{table}[htbp]
    \centering
    {\scriptsize
    \caption{Comparison of \texttt{DCQP}, \texttt{Gurobi} and \texttt{QPL} on \texttt{qp}\_\texttt{u}\_\texttt{25}\_\texttt{1} instances. See the caption of~\Cref{table:randqp} for notation and formatting details.}
    \label{table:randomqp100 50 25 50 1}
    \begin{tabular}{cccccccccc}
    \toprule
\multirow{2}{*}{Group}& \multirow{2}{*}{Index}  &  \multicolumn{4}{c}{\texttt{DCQP}} & \multicolumn{2}{c}{\texttt{Gurobi}} & \multicolumn{2}{c}{\texttt{QPL}} \\
    \cmidrule(l{1pt}r{2pt}){3-6}
    \cmidrule(l{2pt}r{2pt}){7-8}
    \cmidrule(l{2pt}r{2pt}){8-10}
    &  & $\mathrm{Gap}_{\mathrm{initial}}$ & No. of cuts & $\mathrm{Gap}_{\mathrm{final}}$ & Time & $\mathrm{Gap}_{\mathrm{final}}$ & Time & $\mathrm{Gap}_{\mathrm{final}}$ & Time  \\
\midrule
 \multirow{20}{*}{\texttt{u}\_\texttt{25}\_\texttt{1}} &1 & 0.0198 & 3 & - & \textbf{306} & 0.8 & 3600 & 0.8 & 3600 \\
& 2 & 0.0569 & 8 & - & \textbf{477} & 1.3 & 3600 & 0.9 & 3600 \\
& 3 & 0.143 & 21 & - & \textbf{977} & 1.6 & 3600 & 1 & 3600 \\
& 4 & 0.0001 & 1 & - & \textbf{181} & 0.8 & 3600 & 0.9 & 3600 \\
& 5 & 0.0529 & 5 & - & \textbf{357} & 1.9 & 3600 & 1 & 3600 \\
& 6 & - & 0 & - & \textbf{73.6} & 0.6 & 3600 & 0.8 & 3600 \\
& 7 & 0.1618 & 27 & - & \textbf{1160} & 2.3 & 3600 & 1 & 3600 \\
& 8 & - & 0 & - & \textbf{62.1} & 0.6 & 3600 & 0.6 & 3600 \\
& 9 & 0.0404 & 5 & - & \textbf{582} & 1.3 & 3600 & 0.7 & 3600 \\
& 10 & 0.0833 & 7 & - & \textbf{388} & 2.4 & 3600 & 0.8 & 3600 \\
& 11 & 0.0644 & 4 & 0.000123 & {431} & 1.4 & 3600 & 0.9 & 3600 \\
& 12 & 0.0447 & 5 & - & \textbf{469} & 1.5 & 3600 & 0.9 & 3600 \\
& 13 & 0.0004 & 1 & - & \textbf{141} & 0.8 & 3600 & 1 & 3600 \\
& 14 & 0.0432 & 5 & - & \textbf{481} & 1 & 3600 & 0.9 & 3600 \\
& 15 & 0.0927 & 12 & - & \textbf{730} & 1.1 & 3600 & 1 & 3600 \\
& 16 & 0.0172 & 2 & - & \textbf{246} & 1.8 & 3600 & 0.9 & 3600 \\
& 17 & 0.0132 & 2 & - & \textbf{264} & 1.8 & 3600 & 1 & 3600 \\
& 18 & 0.2326 & 26 & - & \textbf{1240} & 2.1 & 3600 & 0.8 & 3600 \\
& 19 & - & 0 & - & \textbf{64.4} & 0.8 & 3600 & 1 & 3600 \\
& 20 & 0.0068 & 2 & - & \textbf{252} & 1.1 & 3600 & 1 & 3600 \\
\bottomrule
    \end{tabular}}
\end{table}

\section*{Acknowledgments}
The authors would like to thank  Chiyu Ma for his advice in coding.

\newpage
\appendix

\section{Proof of~\Cref{thm:STc}}\label{sec-appendix:proofofthmSTc}

In this section we present the proof of~\Cref{thm:STc}. The following notations are used throughout. For any matrix $X$, $X_i$ denotes its $i$-th column vector; $X_{p:q}$ denotes the submatrix formed by rows $p$ through $q$; and $X_{p:q,r:s}$ denotes the submatrix corresponding to the intersection of rows $p$ to $q$ and columns $r$ to $s$. For any vector $x$, $x_{p:q}$ denotes the subvector containing elements with indices from $p$ to $q$.

Throughout this section, $\bar x$ is a KKT point of~\eqref{eq:QP} such that $Q|_{\cH_{I_{\bar x}}}\succ 0$. The values $\nu_{_R}\leq \Phi(\bar x)$ and $ \beta \in [0, \Phi(\bar x)-\nu_{_R}] $ are given.

\subsection{Change of Coordinate}
Before presenting the proof of~\Cref{thm:STc} in~\Cref{subsec:proofmainthm}, we first perform an appropriate change of coordinates. Since at least one eigenvalue of $Q$ is negative and $Q|_{H_{I_{\bar x}}}\succ 0$, we have $I_{\bar x}\neq \emptyset$. 
By permuting the rows of $A$, we assume without loss of generality that there is $k\in [n]$ such that  $\bar \lambda \geq 0$, $\bar \lambda_{k+1:m}=\bzero$ and
\begin{equation}\label{eq:KKTI}
\cH_{I_{\bar x}}=\cH_{[k]},\enspace
A_{1:k} \bar x =b_{1:k},\enspace Q\bar x+d ={-}\sum_{i=1}^k \bar \lambda_i a_i, \enspace 
\rank(A_{1:n})=n.
\end{equation}
We then make the following change of coordinate:
\begin{align}\label{a:cc}
y=b_{1:n}-A_{1:n}x,
\end{align}
so that~\eqref{eq:validcut} is equivalent to
\begin{equation}\label{eq:validcut-y}
\begin{aligned}
    \nu_{_R} \leq \min_{y\in \R^{n}} \quad &   y^\top R y + 2 p^\top y+r   \\
        \textrm{\rm s.t.} \quad &  Fy\leq w\\
        & \displaystyle  y\geq 0\\
        & \theta^\top (y-\bar y) \leq 1
    \end{aligned}
\end{equation}
where
\begin{subequations}\label{sub:ccor}
\begin{align}
R&:=(A_{1:n}^{-1})^\top Q (A_{1:n}^{-1}),\enspace p:=-(A_{1:n}^{-1})^\top d-(A_{1:n}^{-1})^\top QA_{1:n}^{-1}b_{1:n},\label{eq:p}
\\ 
F&:= -A_{n+1:m} A_{1:n}^{-1}
,\enspace 
w:= b_{n+1:m}-A_{n+1:m}A_{1:n}^{-1}b_{1:n},\label{eq:w}\\
r&:= \left(Q A_{1:n}^{-1} b_{1:n} + 2d\right)^\top A_{1:n}^{-1} b_{1:n}, \enspace \theta:=-(A_{1:n}^{-1})^\top c, \enspace \bar y:=b_{1:n}-A_{1:n}\bar x.\label{eq:c}
\end{align}
\end{subequations}
It is important to note that 
$$
\bar y_1=\cdots=\bar y_k=0.
$$

In the following two subsections, we provide different ways to generate a vector $\theta$  that satisfies~\eqref{eq:validcut-y}. Note that the vector $\theta$ is in one-to-one correspondence with the cut $c$. Any $\theta$ that satisfies~\eqref{eq:validcut-y} results in a valid cut $c=-A_{1:n}^\top \theta$. The generation of the vector $\theta$, along with the associated technical results, forms the core components of the proof of~\Cref{thm:STc}.

\subsection{Some Relations}
Let us first derive some basic properties from~\eqref{eq:KKTI} and the fact that $Q|_{\cH_{I_{\bar x}}}\succ 0$.
When $k<n$,
we partition $R$, $p$ and $F$ into blocks:
\begin{align}\label{a:pwdUb}
R=\begin{pmatrix}
R_{11} & ~R_{12} \\
R_{12}^\top & ~R_{22}
\end{pmatrix}, \enspace p=\begin{pmatrix}
 p_1 \\ p_2
\end{pmatrix},\enspace F=\begin{pmatrix}F_1 &~ F_2\end{pmatrix},
\end{align}
where $R_{11}\in \cS^k$, $R_{22}\in \cS^{n-k}$, $R_{12}\in \R^{k\times (n-k)}$, $p_1\in \R^k$, $p_2\in \R^{n-k}$,
$F_1 \in \R^{(m-n)\times k}$ and $F_2 \in \R^{(m-n)\times (n-k)}$. We first make a simple yet important observation. 
\begin{lemma}
The condition $Q|_{\cH_{I_{\bar x}}}\succ 0$ directly translates into $R_{22}\succ 0$. 
\end{lemma}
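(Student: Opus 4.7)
The proof proposal is a direct translation through the change of coordinates defined in~\eqref{a:cc}, taken at the level of \emph{directions} rather than points. The plan is as follows.

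First I would note that, by~\eqref{eq:KKTI}, $A_{1:n}$ is invertible, so the linear map $h \mapsto -A_{1:n} h$ is a bijection of $\R^n$ onto itself. This means that the linear change of coordinates \eqref{a:cc} sends directions $h$ in $x$-space to directions $y = -A_{1:n} h$ in $y$-space bijectively. Under this correspondence, the subspace $\cH_{I_{\bar x}} = \cH_{[k]} = \{h\in\R^n : A_{1:k} h = 0\}$ is mapped exactly to $\{y\in\R^n : y_{1:k}=0\}$, since the first $k$ coordinates of $-A_{1:n} h$ are $-A_{1:k} h$, which vanish precisely when $h\in\cH_{[k]}$.

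Next I would compute the quadratic form of $Q$ in the new coordinates. For any $h\in\R^n$ with $y = -A_{1:n} h$, we have $h = -A_{1:n}^{-1} y$, and hence, by the definition of $R$ in~\eqref{eq:p},
\begin{equation*}
h^\top Q h = y^\top (A_{1:n}^{-1})^\top Q (A_{1:n}^{-1}) y = y^\top R y.
\end{equation*}
When $h\in\cH_{[k]}$, we have $y_{1:k}=0$, and using the block decomposition~\eqref{a:pwdUb} this simplifies to
\begin{equation*}
h^\top Q h \;=\; \begin{pmatrix} 0 \\ y_{k+1:n}\end{pmatrix}^\top \begin{pmatrix} R_{11} & R_{12} \\ R_{12}^\top & R_{22}\end{pmatrix}\begin{pmatrix} 0 \\ y_{k+1:n}\end{pmatrix} \;=\; y_{k+1:n}^\top R_{22}\, y_{k+1:n}.
\end{equation*}

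Finally, I would conclude by observing that as $h$ ranges over all nonzero vectors in $\cH_{[k]}$, $y_{k+1:n}$ ranges over all nonzero vectors in $\R^{n-k}$ (by the bijectivity established in the first step). Therefore $h^\top Q h > 0$ for all nonzero $h\in\cH_{I_{\bar x}}$ if and only if $y_{k+1:n}^\top R_{22}\, y_{k+1:n} > 0$ for all nonzero $y_{k+1:n}\in\R^{n-k}$, i.e.\ $R_{22}\succ 0$. There is no real obstacle here: the statement is an instance of how a nondegenerate linear change of variables transforms positive definiteness on a subspace into positive definiteness of the corresponding principal block, and the only thing to be slightly careful about is that the change of coordinates in~\eqref{a:cc} is affine, so one must argue at the level of directions $h$, not points $x$.
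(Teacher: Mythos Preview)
Your proposal is correct and follows essentially the same approach as the paper: both arguments pass through the change of variables $x \leftrightarrow y$ via $A_{1:n}$, identify $\cH_{I_{\bar x}}=\cH_{[k]}$ with $\{y:y_{1:k}=0\}$, and use $R=(A_{1:n}^{-1})^\top Q A_{1:n}^{-1}$ to reduce $h^\top Q h$ to $y_{k+1:n}^\top R_{22}\,y_{k+1:n}$. You are slightly more thorough in that you explicitly note the bijectivity and prove both directions of the equivalence, whereas the paper's proof only writes out the forward implication.
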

\begin{proof}
Let $y\neq 0$ satisfying $y_1=\cdots=y_k=0$. By the definition of $R$ in~\eqref{eq:p}, 
$$
y^\top Ry = (A_{1:n}^{-1} y)^\top Q (A_{1:n}^{-1} y).
$$
Let  $x=A_{1:n}^{-1} y$. If $y_1=\cdots=y_k=0$, then $A_{1:k} x=0$ and $x\in \cH_{I_{\bar x}}$ because $\cH_{I_{\bar x}}=\cH_{[k]}$. Hence, if $Q|_{\cH_{I_{\bar x}}}\succ 0$, we must have $y^\top R y>0$.

\end{proof}
Given that $R_{22}\succ 0$ when $k<n$, we define the matrix $D\in \cS_n$, the vector $q\in \R^n$ and the scalar $\upsilon\in \R$ as follows:
\begin{align}\label{eq:D}
D := \left\{\begin{array}{ll}
\begin{pmatrix}
    R_{11}-R_{12}R_{22}^{-1}R_{12}^\top & \bzero\\
    \bzero^\top
    & \bzero
\end{pmatrix},& \enspace\mathrm{~if~} k<n, \\
R, & \enspace\mathrm{~if~} k=n,
\end{array}
\right.
\end{align}
\begin{align}\label{eq:q}
q := \left\{\begin{array}{ll}
\begin{pmatrix}
    p_1-R_{12}R_{22}^{-1}p_2 \\
    \bzero
\end{pmatrix},& \enspace\mathrm{~if~} k<n ,\\
p, & \enspace\mathrm{~if~} k=n,
\end{array}
\right.
\end{align}
and
\begin{align}\label{eq:upsilon}
\upsilon := \left\{\begin{array}{ll}
    r - p_2^\top R_{22}^{-1}p_2~,
& \enspace\mathrm{~if~} k<n, \\
r~, & \enspace\mathrm{~if~} k=n.
\end{array}
\right.
\end{align} 
\begin{lemma}\label{l:q=lambda}
We have
$q=\bar \lambda_{1:n}$ and $\upsilon=\Phi(\bar x)$. 
\end{lemma}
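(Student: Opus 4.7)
The plan is to push the KKT conditions for $\bar x$ through the coordinate change $y=b_{1:n}-A_{1:n}x$ to obtain clean expressions for $p$ and $r$, and then perform the Schur-complement simplification that defines $q$ and $\upsilon$. The main identity that makes everything collapse is that $\bar\lambda_{k+1:n}=0$, so $A_{1:k}^\top\bar\lambda_{1:k}=A_{1:n}^\top\bigl(\!\begin{smallmatrix}\bar\lambda_{1:k}\\\bzero\end{smallmatrix}\!\bigr)$, which lets $(A_{1:n}^{-1})^\top$ cancel with $A_{1:n}^\top$.

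For $q$, the first step is to use the stationarity part of~\eqref{eq:KKTI} together with $A_{1:n}^{-1}b_{1:n}=\bar x+A_{1:n}^{-1}\bar y$ to rewrite
$$p=-(A_{1:n}^{-1})^\top\bigl(d+QA_{1:n}^{-1}b_{1:n}\bigr)=\begin{pmatrix}\bar\lambda_{1:k}\\\bzero\end{pmatrix}-R\bar y.$$
Then I would invoke primal feasibility on the active rows, $A_{1:k}\bar x=b_{1:k}$, which forces $\bar y_{1:k}=0$. Partitioning accordingly and writing $\bar y_2:=\bar y_{k+1:n}$, the product $R\bar y$ becomes $(R_{12}\bar y_2,\,R_{22}\bar y_2)$, so $p_1=\bar\lambda_{1:k}-R_{12}\bar y_2$ and $p_2=-R_{22}\bar y_2$. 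Since $R_{22}\succ 0$ (a direct consequence of $Q|_{\cH_{I_{\bar x}}}\succ 0$ already used to define $D$ in~\eqref{eq:D}), inversion gives $R_{22}^{-1}p_2=-\bar y_2$, and the two $R_{12}\bar y_2$ terms cancel in $p_1-R_{12}R_{22}^{-1}p_2$, leaving $\bar\lambda_{1:k}$. Combined with $\bar\lambda_{k+1:n}=0$ this yields $q=\bar\lambda_{1:n}$.

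For $\upsilon$, the plan is first to notice from the definition of $r$ in~\eqref{eq:c} that $r=\Phi(A_{1:n}^{-1}b_{1:n})$, and second that $\bar y^\top R\bar y=\bar z^\top Q\bar z$ where $\bar z:=A_{1:n}^{-1}\bar y$. Using once again $R_{22}^{-1}p_2=-\bar y_2$ and $\bar y_{1:k}=0$ gives $p_2^\top R_{22}^{-1}p_2=\bar y^\top R\bar y$, so $\upsilon=r-\bar y^\top R\bar y$. Expanding $\Phi(\bar x+\bar z)-\bar z^\top Q\bar z=\Phi(\bar x)+2\bar z^\top(Q\bar x+d)$, the cross term is then killed by the KKT stationarity condition: it becomes $-2\bar\lambda_{1:k}^\top A_{1:k}\bar z$, and $A_{1:k}\bar z=(A_{1:n}A_{1:n}^{-1}\bar y)_{1:k}=\bar y_{1:k}=0$. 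This yields $\upsilon=\Phi(\bar x)$.

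The degenerate case $k=n$ is immediate: then $\bar y=\bzero$ and $\bar x=A_{1:n}^{-1}b_{1:n}$, so $r=\Phi(\bar x)=\upsilon$ and the same cancellation $(A_{1:n}^{-1})^\top A_{1:n}^\top =\bI$ gives $p=\bar\lambda_{1:n}=q$ directly from the definition of $p$. I do not anticipate any genuine obstacle; the only thing to keep track of is the twofold role of the stationarity condition $Q\bar x+d=-A_{1:k}^\top\bar\lambda_{1:k}$ — once to eliminate $d$ from $p$, and once to eliminate the linear-in-$\bar z$ term from the expansion of $r$ — together with the careful bookkeeping of the block partition induced by $k$.
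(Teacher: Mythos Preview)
Your proposal is correct and follows essentially the same approach as the paper: both derive the key identity $p+R\bar y=\bigl(\begin{smallmatrix}\bar\lambda_{1:k}\\ \bzero\end{smallmatrix}\bigr)$ from the stationarity condition, use $\bar y_{1:k}=0$ and $R_{22}\succ 0$ to invert and obtain $q=\bar\lambda_{1:n}$, and then reduce the $\upsilon$ claim to the same relation $\bar y_{k+1:n}=-R_{22}^{-1}p_2$. The only cosmetic difference is in the $\upsilon$ computation: the paper stays in $y$-coordinates, writing $\Phi(\bar x)=\bar y^\top R\bar y+2p^\top\bar y+r$ and simplifying via $p+R\bar y=\bar\lambda_{1:n}$ together with $\bar\lambda_{1:n}^\top\bar y=0$, whereas you go back to $x$-coordinates by recognising $r=\Phi(\bar x+\bar z)$ and killing the cross term with $A_{1:k}\bar z=\bar y_{1:k}=0$. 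These are two equivalent bookkeepings of the same cancellation.
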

\begin{proof}
Under the change of coordinate~\eqref{a:cc},
the equation $Q\bar x+d=-\sum_{i=1}^k \bar \lambda_i a_i$ in~\eqref{eq:KKTI} corresponds to: 
$$
(A_{1:n}^{-1})^\top Q(A_{1:n}^{-1}(b_{1:n}-\bar y))+ (A_{1:n}^{-1})^\top d=-\sum_{i=1}^k \bar \lambda_i (A_{1:n}^{-1})^\top a_i,
$$
By~\eqref{eq:p}, the above equation can be simplified to
\begin{equation}\label{eq:pRbary}
p+R\bar y=\sum_{i=1}^k \bar \lambda_i e_i=\begin{pmatrix}
\bar \lambda_{1:k} \\
\bzero
\end{pmatrix}.
\end{equation}
If $k=n$, then $\bar y=\bzero$ and thus $q=p=\bar \lambda_{1:n}$. Otherwise,~\eqref{eq:pRbary} yields
$$
\begin{pmatrix}
p_1 + R_{12} \bar y_{k+1:n} \\
p_2+R_{22} \bar y_{k+1:n}
\end{pmatrix}=\begin{pmatrix}
\bar \lambda_{1:k} \\
\bzero
\end{pmatrix},
$$
so that 
$$
\bar y_{k+1:n}=-R_{22}^{-1}p_2,\enspace p_1-R_{12}R_{22}^{-1}p_2=\bar \lambda_{1:k}.
$$
Hence $q=\bar \lambda_{1:n}$. Note that 
$
\Phi(\bar x)=\bar y^\top R \bar y+2p^\top \bar y +r.
$ If $k=n$, then $\bar y=\bzero$ so we have $\upsilon=r=\Phi(\bar x)$. If $k<n$,
using~\eqref{eq:pRbary} we obtain 
$$\bar y^\top R \bar y+2p^\top \bar y=p^\top \bar y+ y^\top \bar \lambda_{1:n}=p^\top \bar y =p_2 ^\top \bar y_{k+1:n}=-p_2^\top R_{22}^{-1} p_2.$$
Therefore 
$
\Phi(\bar x)=r-p_2^\top R_{22}^{-1} p_2=\upsilon.
$

\end{proof} 
Denote 
\begin{equation}\label{eq:defH}
H= \left\{\begin{array}{ll}
   \begin{pmatrix}
     R_{12} \\ ~ R_{22} \\ ~p_2^\top
    \end{pmatrix}
    R^{-1}_{22}\begin{pmatrix}
     R_{12}^\top &~ R_{22} & ~p_2
    \end{pmatrix},
& \enspace \mathrm{~if~} k<n, \\
\bzero~, & \enspace \mathrm{~if~} k=n.
\end{array}
\right.
\end{equation}
It can be straightforwardly verified that:
\begin{equation}\label{eq:R=D}
\begin{pmatrix}
R & p \\p^\top & r
\end{pmatrix}= \begin{pmatrix}
D & q \\q^\top & \upsilon
\end{pmatrix} + H.
\end{equation}

\subsection{Valid Cut Through Linear Programming}
In this section, we show how to construct valid cut through linear programming.  
For each $i\in [k]$, let
\begin{equation}\label{eq:mustar}
\begin{aligned}
         \mu^*_i:=\min  &  \qquad    \<q,y>+ \beta y_0   \\
        ~ \textrm{s.t.} &\qquad  \<D_i,y>+q_i y_0 =-1\\
&\qquad F y \leq wy_0 \\
& \qquad  y,y_0\geq 0.
    \end{aligned}
\end{equation}
By~\Cref{l:q=lambda}, we have $q\geq 0$.
Hence, $\mu_i^*$ lies in $[0,+\infty]$, with the convention that $\mu_i^*=+\infty$ if~\eqref{eq:mustar} has no feasible solution.

\begin{lemma}\label{l:lambdaistar}
If $\mu_i^*\in (0,+\infty]$, then there exits $v^i\geq 0$, $s^i\geq 0$ and $\delta_i\geq 0$ such that 
\begin{equation}\label{eq:lHs}
\begin{pmatrix}\bzero
& (\mu_i^*)^{-1} q+ D_i+F^\top  v^{i}- s^{i} \\ ((\mu_i^*)^{-1} q+ D_i+F^\top  v^{i}- s^{i})^\top&  2((\mu_i^*)^{-1} \beta+ q_i-w^\top v^{i}-\delta_i)\end{pmatrix} \succeq 0
\end{equation}
\end{lemma}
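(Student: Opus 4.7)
The plan is to reduce the positive semidefiniteness assertion in~\eqref{eq:lHs} to two linear inequalities and then obtain $v^i$ from LP duality applied to~\eqref{eq:mustar}.

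\textbf{Step 1 (PSD reduction).} The key structural observation is that any symmetric matrix of block form with an $n\times n$ zero block on the diagonal, an off-diagonal vector $a\in\R^n$, and a scalar $2\gamma$ as the remaining diagonal entry is positive semidefinite if and only if $a=\bzero$ and $\gamma\geq 0$. Applied to~\eqref{eq:lHs}, and since $s^i\geq 0$ and $\delta_i\geq 0$ are free to choose, producing admissible multipliers is equivalent to finding $v^i\geq 0$ such that
\begin{equation*}
(\mu_i^*)^{-1}q + D_i + F^\top v^i \geq 0 \quad\text{and}\quad (\mu_i^*)^{-1}\beta + q_i - w^\top v^i \geq 0,
\end{equation*}
with the convention $1/\mu_i^*=0$ when $\mu_i^*=+\infty$; one then sets $s^i$ equal to the first expression and picks $\delta_i\in[0,\,(\mu_i^*)^{-1}\beta+q_i-w^\top v^i]$.

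\textbf{Step 2 (LP dual of~\eqref{eq:mustar}).} Next, form the Lagrangian of~\eqref{eq:mustar} using a free multiplier $\mu\in\R$ for the equality constraint and a nonnegative multiplier $v$ for $Fy-wy_0\leq 0$, while carrying the sign constraints into the Lagrangian through $\alpha,\alpha_0\geq 0$. A standard calculation gives the LP dual
\begin{equation*}
\max_{\mu\in\R,\,v\geq 0}\ \mu \quad\text{s.t.}\quad \mu D_i + F^\top v + q \geq 0,\quad \mu q_i - w^\top v + \beta \geq 0.
\end{equation*}

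\textbf{Step 3 (case $\mu_i^*<+\infty$).} Since $\mu_i^*\in(0,+\infty)$ by hypothesis, \eqref{eq:mustar} is feasible and bounded, so LP strong duality supplies a dual optimum $(\mu^\star,v^\star)$ with $\mu^\star=\mu_i^*$ and $v^\star\geq 0$ satisfying the two dual constraints. Setting $v^i:=v^\star/\mu_i^*$ and dividing these constraints by $\mu_i^*>0$ yields precisely the inequalities of Step 1.

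\textbf{Step 4 (case $\mu_i^*=+\infty$).} In this case~\eqref{eq:mustar} is infeasible, and Farkas' lemma applied to the system $\{D_i^\top y+q_iy_0=-1,\ Fy-wy_0\leq 0,\ y,y_0\geq 0\}$ produces $\tilde\mu\in\R$ and $\tilde v\geq 0$ with $\tilde\mu D_i+F^\top\tilde v\geq 0$ and $\tilde\mu q_i-w^\top\tilde v\geq 0$; a sign normalization coming from the Farkas witness ensures $\tilde\mu>0$. Setting $v^i:=\tilde v/\tilde\mu$ then yields $D_i+F^\top v^i\geq 0$ and $q_i-w^\top v^i\geq 0$, which match Step 1 under the convention $1/\mu_i^*=0$.

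The only delicate aspect is the sign bookkeeping in deriving the dual and, in Step 4, correctly normalizing the Farkas certificate so that scaling by $\tilde\mu>0$ is justified; neither is substantively hard once the dual in Step 2 has been written down. In particular, no additional structural hypothesis (such as nondegeneracy of $\bar x$ or the strict inequality $\beta<\Phi(\bar x)-\nu_R$) is used in this lemma; these will be invoked separately to guarantee $\mu_i^*>0$ or, in Part (b) of~\Cref{thm:STc}, strict feasibility.
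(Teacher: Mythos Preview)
Your proposal is correct and follows essentially the same route as the paper: LP strong duality for the finite case and Farkas' lemma for the infeasible case, followed by division by $\mu_i^*>0$ (respectively $\tilde\mu>0$). One minor remark: in Step~4 you needn't hedge about ``sign normalization''---applying Farkas to the system with right-hand side $-1$ directly forces the equality multiplier $\tilde\mu$ to be strictly positive, which is exactly how the paper obtains $v^i\geq 0$ with $D_i+F^\top v^i\geq 0$ and $q_i-w^\top v^i\geq 0$ without any rescaling ambiguity.
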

\begin{proof}
\begin{enumerate}
\item If $\mu_i^*=+\infty$, the feasible region of~\eqref{eq:mustar} is empty, which implies that the linear system
$$\left\{
\begin{array}{l}
\<D_i,y>+q_i y_0 =-1\\
 F y \leq wy_0 \\
  y\geq 0, y_0\geq 0
\end{array}
\right.
$$
has no solution. By Farkas' Lemma, there exists $v^i\geq 0$, $s^i\geq 0$ and $\delta_i \geq 0 $ such that 
$$
D_i+F^\top v^i=s^i,\enspace q_i - w^\top v^i=\delta_i. 
$$
Therefore~\eqref{eq:lHs} holds  because $(\mu_i^*)^{-1}=0$. 
\item If $\mu_i^* \neq +\infty$, then~\eqref{eq:mustar} has an optimal solution. By strong duality, the following dual problem of~\eqref{eq:mustar} 
\begin{equation}\label{eq:mustardual}
\begin{aligned}
         \max_{\lambda,v,s}  &  \qquad    \lambda   \\
        ~ \textrm{s.t.} &\qquad  
        q+\lambda D_i+F^\top v-s = 0 \\
        & \qquad  \beta+\lambda q_i-w^\top v\geq 0\\
        &\qquad  v\geq 0, \enspace s \geq 0
    \end{aligned}
\end{equation}
can reach the optimal value $\mu_i^*$. In other words, there exists $\bar v^i$ and $\bar s^i$ such that 
$$
\begin{pmatrix}\bzero
& q+\mu^*_i D_i+F^\top \bar v^{i}-\bar s^{i} \\ (q+\mu^*_i D_i+F^\top \bar v^{i}-\bar s^{i})^\top&  2(\beta+\mu^*_i q_i-w^\top \bar v^{i})\end{pmatrix} \succeq 0.
$$
Hence~\eqref{eq:lHs} holds with 
$v^i=(\mu_i^*)^{-1}\bar v_i$, $s^i=(\mu_i^*)^{-1}\bar s_i$ and $\delta_i=0$.
\end{enumerate}

\end{proof}
\begin{proposition}\label{prop:Konno} 
If $\mu^*_i \in (0,+\infty]$ for each $i\in [k]$, then~\eqref{eq:validcut-y} holds with 
\begin{align}\label{eq:theta-lp}
\theta_i := \left\{\begin{array}{ll}
    (\mu^*_i)^{-1},
& \enspace\mathrm{~if~} i\leq k \\
0~, & \enspace\mathrm{~if~}  k<i\leq n
\end{array}
\right.
\end{align}
\end{proposition}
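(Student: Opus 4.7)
The plan is to reduce the inequality to a quadratic inequality on the Schur-complement part of $R$ and then assemble a Farkas-style certificate from the multipliers furnished by~\Cref{l:lambdaistar}. Since $H \succeq 0$ by construction, the decomposition~\eqref{eq:R=D} gives $y^\top R y + 2 p^\top y + r \geq y^\top D y + 2 q^\top y + \upsilon$ for every $y \in \R^n$. Using~\Cref{l:q=lambda} to identify $\upsilon = \Phi(\bar x)$, together with the hypothesis $\beta \leq \Phi(\bar x) - \nu_{_R}$, the desired bound reduces to showing $y^\top D y + 2 q^\top y \geq -\beta$ on the set $\{y : y \geq \bzero,\ Fy \leq w,\ \theta^\top y \leq 1\}$. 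The cut constraint simplifies to $\theta^\top y \leq 1$ because $\bar y_{1:k} = \bzero$ from~\eqref{eq:KKTI} while $\theta_i = 0$ for $i > k$, so $\theta^\top \bar y = 0$.

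Next I would apply~\Cref{l:lambdaistar} to each $i \in [k]$. Since the leading $n \times n$ block in~\eqref{eq:lHs} is zero, positive semidefiniteness forces the off-diagonal vector to vanish, extracting the linear identity $\theta_i q + D_i + F^\top v^i - s^i = \bzero$ with $v^i, s^i \geq \bzero$, together with the scalar inequality $\theta_i \beta + q_i - w^\top v^i \geq 0$. These are precisely the LP duality certificates for the programs~\eqref{eq:mustar}, bundled into a single SDP feasibility statement.

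The crux is the calculation, which hinges on the structural facts $D_i = \bzero$ for $i > k$ and $q_i = 0$ for $i > k$ (the latter from~\Cref{l:q=lambda} together with $\bar\lambda_{k+1:m} = \bzero$). These let me write $y^\top D y = \sum_{i=1}^k y_i\,(y^\top D_i)$; substituting the identity from the previous paragraph and bounding term-by-term using $y \geq \bzero$, $s^i \geq \bzero$, $Fy \leq w$, and $v^i \geq \bzero$ produces
$$y^\top D y \ \geq\ -\sum_{i=1}^k y_i\, w^\top v^i \ -\ (\theta^\top y)(q^\top y).$$
Weighting the scalar inequalities by $y_i \geq 0$ and summing gives $\sum_{i=1}^k y_i\, w^\top v^i \leq q^\top y + \beta\,\theta^\top y$, which rearranges to $y^\top D y + 2 q^\top y \geq q^\top y\,(1 - \theta^\top y) - \beta\,\theta^\top y$. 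Since $q \geq \bzero$, $y \geq \bzero$, $\theta^\top y \in [0,1]$, and $\beta \geq 0$, the first term is nonnegative and $-\beta\,\theta^\top y \geq -\beta$, which closes the argument.

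The main obstacle is not any single hard step but the index bookkeeping: the argument works only because the KKT structure~\eqref{eq:KKTI} forces $D$ and $q$ to be supported on the first $k$ coordinates, so that the $k$ scalar LP dualities assemble coherently into a single quadratic certificate matching the $k$ nonzero components of $\theta$. Once this bookkeeping is lined up correctly, the remaining manipulations are elementary substitutions.
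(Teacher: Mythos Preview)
Your argument is correct and follows essentially the same route as the paper: both reduce to showing $y^\top D y + 2q^\top y + \beta \ge 0$ via~\eqref{eq:R=D} and the standing bound on $\beta$, invoke~\Cref{l:lambdaistar} for the multipliers $v^i,s^i$, and exploit the support of $D$ and $q$ on the first $k$ coordinates to assemble the certificate. The only cosmetic difference is that you explicitly extract the vanishing of the off-diagonal block in~\eqref{eq:lHs} (which the zero leading block indeed forces), whereas the paper leaves this implicit by writing the identity~\eqref{eq:yDy} and using $\sum_i y_i G^i \succeq 0$ directly.
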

\begin{proof}
In view of~\Cref{l:lambdaistar},
there exists $v^1\geq 0,\ldots,v^k \geq 0, s^1\geq 0, \ldots,s^k \geq 0$ and $\delta_1\geq 0,\ldots, \delta_k \geq 0$ such that
$$
 G^i:=\frac{1}{2}\begin{pmatrix}\bzero
& \theta_i q+ D_i+F^\top  v^{i}- s^{i} \\ ( \theta_i q+ D_i+F^\top  v^{i}- s^{i})^\top&  2( \theta_i\beta+ q_i-w^\top v^{i}-\delta_i)\end{pmatrix} \succeq 0,\enspace i\in [k].
$$
We have
$$
\begin{aligned}
& \begin{pmatrix}
y^\top  &~1 
\end{pmatrix} 
 G^i 
\begin{pmatrix}
y \\1 
\end{pmatrix} \\ 
&=  D_i^\top y +q_i +
(Fy-w)^\top   v^i -y^\top s^i   + \theta_i\left( q^\top y+\beta \right)-\delta_i,\enspace \forall y \in \R^n.
\end{aligned}
$$
Multiplying each side by $ y_i$ and summing over $i$ from 1 to $k$ we obtain 
$$
\begin{aligned}
& \begin{pmatrix}
y^\top  &~1 
\end{pmatrix} 
\left(\sum_{i=1}^k  y_i G^i \right)
\begin{pmatrix}
y \\1 
\end{pmatrix} \\ 
&=\sum_{i=1}^k y_i (D_i^\top y +q_i) +
(Fy-w)^\top \left(\sum_{i=1}^k  y_i v^i \right )-y^\top \left(\sum_{i=1}^k y_i s^i \right) \\ &  \qquad +\sum_{i=1}^k y_i \theta_i \left(q^\top y+\beta\right) -\sum_{i=1}^k \delta_i y_i,\enspace \forall y \in \R^n.
\end{aligned}
$$
Recall from the definition of $D$ and $q$ in~\eqref{eq:D} and~\eqref{eq:q} that $D_{i,j}=0$ if $i>k$ or $j>k$ and $q_i=0$ if $i>k$.
It follows that
\begin{equation}\label{eq:yDy}
\begin{aligned}
& y^\top D y+2q^\top y+\beta   
\\ & = \begin{pmatrix}
y^\top  &~1 
\end{pmatrix} 
\left(\sum_{i=1}^k  y_i G^i \right)
\begin{pmatrix}
y \\1 
\end{pmatrix} + (w-Fy)^\top \left(\sum_{i=1}^k y_i v^i \right) +y^\top \left(\sum_{i=1}^k  y_i s^i \right ) \\ & \quad +\left( 1-\sum_{i=1}^k \theta_iy_i \right)\left(q^\top y+\beta\right)+\sum_{i=1}^k \delta_i y_i,\enspace \forall y\in \R^n.
\end{aligned}
\end{equation}
Note that $\sum_{i=1}^k y_i G^i \succeq 0$, $\sum_{i=1}^k  y_i v^i\geq 0$, $\sum_{i=1}^k  y_i s^i\geq 0$,  $q^\top y+\beta \geq 0$ and $\sum_{i=1}^k \delta_i y_i \geq 0$ for all $y\geq 0$. 
Recall that $\bar y_i=0$ for $i\in [k]$. Hence,
\begin{equation}\label{eq:validcut-y-D}
\begin{aligned}
    0 \leq \min_{y\in \R^{n}} \quad &   y^\top D y + 2 q^\top y+\beta   \\
        \textrm{\rm s.t.} \quad &  Fy\leq w\\
        & \displaystyle  y\geq 0\\
        & \theta^\top (y-\bar y) \leq 1.
    \end{aligned}
\end{equation}
In view of~\eqref{eq:R=D} and the fact that $\beta \leq  \Phi(\bar x)-\nu_R =\upsilon-\nu_R$, we deduce~\eqref{eq:validcut-y}.

\end{proof}
\begin{remark}
In the scenario where $\bar{x}$ is a vertex,~\eqref{eq:theta-lp} corresponds precisely to the cut proposed by Konno in~\cite{konno1976cutting} for concave QP (i.e., when $Q \preceq 0$). Note that Konno's cut in its original form was only applicable when $\bar{x}$ is a vertex, which is why it was restricted to the case when $Q \preceq 0$. Interestingly, as we will demonstrate below, the particular choice of the multiplier matrix
$
\begin{pmatrix}
Q \bar{x} + d \\
-\bar{x}^\top Q \bar{x} - d^\top \bar{x} + \beta
\end{pmatrix}
$
which we have selected to ensure the feasibility of the program~\eqref{eq:SDPsearchofc}, is fundamentally rooted in Konno's cut~\eqref{eq:mustar}.

\end{remark}
\begin{proposition}\label{prop:LP}
Assume that $\mu^*_i \in (0,+\infty]$ for each $i\in [k]$ and let $\theta\in \R^n$ be given by~\eqref{eq:theta-lp}.  There exist $\Lambda \geq 0$, $L\geq 0$ and $\ell \geq 0$ such that 
\begin{equation}\label{eq:Rppc3}
\begin{aligned}
\begin{pmatrix}
D & q \\
q^\top &  \beta
\end{pmatrix} &=
\frac{1}{2}\begin{pmatrix}
 \bI_n & \bzero \\ -F & w \\
\bzero^\top & 1
\end{pmatrix}^\top
\begin{pmatrix}
L+L^\top & \Lambda^\top  & \ell    \\
\Lambda & \bzero^{~~} & \bzero \\
\ell^\top & \bzero^\top &  0
\end{pmatrix}
\begin{pmatrix}
 \bI_n & \bzero \\ -F & w \\
\bzero^\top & 1
\end{pmatrix} \\ &\qquad +
\frac{1}{2}\begin{pmatrix}
q \\ \beta
\end{pmatrix}
\begin{pmatrix}
- \theta^\top & 1\end{pmatrix}+\frac{1}{2}\begin{pmatrix}
- \theta \\  1\end{pmatrix}\begin{pmatrix}
q^\top & \beta
\end{pmatrix}.
\end{aligned}
\end{equation}
\end{proposition}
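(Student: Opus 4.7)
The plan is to reuse the multipliers $v^i, s^i, \delta_i$ produced in Lemma~\ref{l:lambdaistar} and turn them into explicit certificates $L, \Lambda, \ell$ satisfying (\ref{eq:Rppc3}). The key preliminary observation is that the positive semidefiniteness in (\ref{eq:lHs}) is very restrictive: because the top-left $n\times n$ block of that $(n+1)\times(n+1)$ matrix vanishes, every principal $2\times 2$ submatrix indexed by a row in $[n]$ and the last row has the form $\begin{pmatrix} 0 & a \\ a & 2h_i\end{pmatrix}$, whose determinant $-a^2$ is nonnegative only if $a=0$. Applied entrywise, this forces, for every $i\in[k]$,
\begin{equation*}
s^i = \theta_i q + D_i + F^\top v^i \quad\text{and}\quad h_i := \theta_i\beta + q_i - w^\top v^i - \delta_i \ge 0.
\end{equation*}

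With these identities, I would extend $v^i, s^i, \delta_i$ to zero for $i>k$ and define
\begin{equation*}
\Lambda := [v^1,\ldots,v^n]\in\R^{(m-n)\times n},\quad L := [s^1,\ldots,s^n]\in\R^{n\times n},
\end{equation*}
together with $\ell\in\R^n$ given by $\ell_i := h_i+\delta_i$ for $i\le k$ and $\ell_i := 0$ for $i>k$. Entrywise nonnegativity $\Lambda, L, \ell \ge 0$ is then immediate from $v^i, s^i, \delta_i \ge 0$ and $h_i \ge 0$. Moreover, since $D_i=\bzero$, $\theta_i=0$, $q_i=0$, and $v^i=\bzero$ for $i>k$, the column-wise identity $s^i = D_i + \theta_i q + F^\top v^i$ extends to all $i\in[n]$ and rewrites compactly as $L = D + q\theta^\top + F^\top\Lambda$.

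It remains to verify (\ref{eq:Rppc3}) block by block. A direct expansion shows that the right-hand side of (\ref{eq:Rppc3}) has top-left $n\times n$ block $\tfrac12(L+L^\top - F^\top\Lambda - \Lambda^\top F - q\theta^\top - \theta q^\top)$, which collapses to $D$ upon symmetrizing the compact formula for $L$ (using $D^\top=D$); top-right block $\tfrac12(\Lambda^\top w + \ell + q - \beta\theta)$, which equals $q$ because $(\Lambda^\top w)_i = w^\top v^i$ turns the requirement $\ell_i = q_i + \beta\theta_i - w^\top v^i$ into the very definition of $\ell_i$ for $i\le k$ (and into $0=0$ for $i>k$); and bottom-right block $\beta$. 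The only non-routine step is the first one: recognizing that the structural zero in (\ref{eq:lHs}) forces the full vector $\theta_i q + D_i + F^\top v^i - s^i$ to vanish. Once this is observed, the column-wise construction of $L$ from the $s^i$'s makes the entire decomposition drop out by routine linear algebra, and the nonnegativity $L\ge 0$ is transparent because its columns are exactly the nonnegative vectors $s^i$.
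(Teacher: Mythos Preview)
Your proof is correct and follows essentially the same approach as the paper's: both extract from the positive semidefiniteness of~\eqref{eq:lHs} (with its vanishing top-left block) the identity $s^i=\theta_i q+D_i+F^\top v^i$ and the nonnegativity of $\theta_i\beta+q_i-w^\top v^i-\delta_i$, and then assemble the columns $s^i$, $v^i$ and the scalars $\theta_i\beta+q_i-w^\top v^i$ into $L$, $\Lambda$, $\ell$. Your block-by-block verification of~\eqref{eq:Rppc3} is more explicit than the paper's polynomial-identity argument, but the underlying construction is identical.
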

\begin{proof}
Here, we follow the notation and leverage parts of the proof of~\Cref{prop:Konno}.
Given the special structure of $G^i$ and the fact that $G^i \succeq 0$, we have:
$$
\begin{pmatrix}
y^\top  &~1 
\end{pmatrix} 
\left(\sum_{i=1}^k  y_i G^i \right)
\begin{pmatrix}
y \\1 
\end{pmatrix}=\sum_{i=1}^k y_i\left( \theta_i\beta+ q_i-w^\top v^{i}-\delta_i\right),
$$
and so~\eqref{eq:yDy} reduces to
\begin{equation}\label{eq:yDyLP}
\begin{aligned}
& y^\top D y+2q^\top y+\beta
\\ & =  (w-Fy)^\top \left(\sum_{i=1}^k y_i v^i \right) +y^\top \left(\sum_{i=1}^k  y_i s^i \right )  +\left( 1-\sum_{i=1}^k \theta_iy_i \right)\left(q^\top y+\beta\right)\\& \qquad+\sum_{i=1}^k \left( \theta_i\beta+ q_i-w^\top v^{i}\right) y_i,\enspace \forall y\in \R^n.
\end{aligned}
\end{equation}
Then~\eqref{eq:yDyLP} is equivalent to~\eqref{eq:Rppc3} with
$$
\Lambda=\begin{pmatrix}
v^1 &
 \cdots &
 v^k &
 \bzero
\end{pmatrix},\enspace \ell =
\begin{pmatrix}
\theta_1\beta+ q_1-w^\top v^{1}\\
\vdots\\
\theta_k\beta+ q_k-w^\top v^{k}\\
 \bzero
\end{pmatrix},\enspace L= \begin{pmatrix}
s^1 & \cdots & s^k & \bzero
\end{pmatrix}.
$$
\end{proof}

\begin{proposition}\label{prop:LPexistence}
If $\beta>0$ or $\min_{i\in [k]} q_i>0$, then there exist $T\geq 0$ and $\theta\in \R^n$ such that 
\begin{equation}
    \label{eq:LPtheta}
    \begin{aligned}
     & \begin{pmatrix}
D & q \\
q^\top &  \beta
\end{pmatrix} =
\begin{pmatrix}
 \bI & \bzero \\ -F & w \\
\bzero^\top & 1
\end{pmatrix}^\top
T
\begin{pmatrix}
 \bI & \bzero \\ -F & w \\
\bzero^\top & 1
\end{pmatrix} \\ &\qquad\qquad\qquad +
\frac{1}{2}\begin{pmatrix}
q \\ \beta
\end{pmatrix}
\begin{pmatrix}
- \theta^\top & 1+\theta^\top \bar y\end{pmatrix}+\frac{1}{2}\begin{pmatrix}
- \theta \\  1+\theta^\top \bar y\end{pmatrix}\begin{pmatrix}
q^\top & \beta
\end{pmatrix}.
    \end{aligned}
\end{equation} 
\end{proposition}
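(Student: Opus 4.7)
The plan is to reduce this statement to Propositions~\ref{prop:Konno} and~\ref{prop:LP}, which together already produce a vector $\theta$ of the form~\eqref{eq:theta-lp} and a representation of the form~\eqref{eq:Rppc3} as soon as $\mu_i^*\in(0,+\infty]$ for every $i\in[k]$. Thus the main task is to verify this positivity condition under either of the two hypotheses, after which a short bookkeeping step converts~\eqref{eq:Rppc3} into~\eqref{eq:LPtheta}.

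First I would check that $\mu_i^*>0$ for every $i\in[k]$. By~\Cref{l:q=lambda} we have $q\geq 0$, and $\beta\geq 0$, so the objective in~\eqref{eq:mustar} is nonnegative on its feasible set. Hence if~\eqref{eq:mustar} is feasible then the minimum is attained at some $(y^*,y_0^*)$, and $\mu_i^*=0$ would force $q^\top y^*+\beta y_0^*=0$. I would then argue by cases. \textbf{Case $\beta>0$:} this forces $y_0^*=0$, so $Fy^*\leq 0$ and $y^*\geq 0$; by boundedness of $\cF$ the image $\{y\geq 0:Fy\leq w\}$ of $\cF$ under the change of variables~\eqref{a:cc} is bounded, its recession cone is trivial, and therefore $y^*=0$; but then $\<D_i,y^*>+q_iy_0^*=0\neq -1$, contradicting feasibility. \textbf{Case $\min_{i\in[k]}q_i>0$:} from $q^\top y^*=0$ I obtain $y_j^*=0$ for all $j\in[k]$; the block structure in the definition~\eqref{eq:D} of $D$ ensures that for $i\in[k]$ all nonzero entries of $D_i$ sit in the first $k$ rows, so $\<D_i,y^*>=0$, and the equality constraint of~\eqref{eq:mustar} reduces to $q_iy_0^*=-1$, which is impossible since $q_i>0$ and $y_0^*\geq 0$.

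Once $\mu_i^*\in(0,+\infty]$ is established for every $i\in[k]$, Proposition~\ref{prop:Konno} provides the vector $\theta\in\R^n$ defined by~\eqref{eq:theta-lp}, in particular with $\theta_i=0$ for all $i>k$, and Proposition~\ref{prop:LP} produces nonnegative $\Lambda$, $L$, $\ell$ such that~\eqref{eq:Rppc3} holds. I would then set
$$
T:=\frac{1}{2}\begin{pmatrix}
L+L^\top & \Lambda^\top & \ell \\
\Lambda & \bzero & \bzero \\
\ell^\top & \bzero^\top & 0
\end{pmatrix},
$$
which is symmetric and entrywise nonnegative, and observe that the coordinate relations $\bar y_1=\cdots=\bar y_k=0$ (from the change of variables preceding~\eqref{sub:ccor}) combined with $\theta_i=0$ for $i>k$ give $\theta^\top\bar y=0$, so that $1+\theta^\top\bar y=1$. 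With this identification~\eqref{eq:LPtheta} coincides with~\eqref{eq:Rppc3} for the chosen $T$ and $\theta$, completing the proof.

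The only nontrivial step is verifying $\mu_i^*>0$; this is where the two alternative hypotheses of the proposition are actually used, each through a different structural feature (boundedness of $\cF$ in the first case, and the zero-block pattern of $D$ in the second). Everything else is bookkeeping on the outputs of Propositions~\ref{prop:Konno} and~\ref{prop:LP}.
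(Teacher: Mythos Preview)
Your proposal is correct and follows essentially the same approach as the paper's proof: first argue that $\mu_i^*\in(0,+\infty]$ for every $i\in[k]$ under either hypothesis (using the recession-cone/boundedness argument when $\beta>0$ and the zero-block structure of $D$ when $\min_{i\in[k]}q_i>0$), then invoke Proposition~\ref{prop:LP} and observe that $\theta^\top\bar y=0$ so that~\eqref{eq:Rppc3} becomes~\eqref{eq:LPtheta}. Your write-up is slightly more detailed than the paper's (you spell out the recession-cone step explicitly), but the logic is identical.
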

\begin{proof}
If $\beta>0$ and $\mu_i^*=0$, then the optimal solution of~\eqref{eq:mustar} must satisfy $y_0=0$, which is impossible due to the boundedness of $\cF$. If $\min_{i\in [k]} q_i>0$ and $\mu_i^*=0$, then the optimal solution of~\eqref{eq:mustar} must satisfy $y_1=\cdots=y_k=0$, which implies that $\<D_i,y>=0$ and thus $\<D_i,y>+q_i y_0\geq 0$. This contradicts with $\<D_i,y>+q_i y_0=-1$. 
Therefore, if $\beta>0$ or $\min_{i\in [k]} q_i>0$, we have $\mu^*_i\in (0,+\infty]$ for each $i\in [k]$. Then it suffices to apply~\Cref{prop:LP} and use the fact that $\theta$ given by~\eqref{eq:theta-lp} satisfies $\theta^\top \bar y=0$.

\end{proof}

\subsection{Proof of~\Cref{thm:STc}}\label{subsec:proofmainthm}
\begin{proof}
If $\bar x$ is nondegenerate, then up to a permutation of the linear constraints, there exist  $\bar \lambda$ such that~\eqref{eq:KKTI} holds with $$
\min_{i\in [k]} \bar \lambda_i>0.
$$
From~\Cref{l:q=lambda} we know that the nondegeneracy of $\bar x$ implies $\min_{i\in [k]} q_i>0$. 
Let $S\in \cS_+^n$ such that $S_{1:k,1:k}\succ 0$ and $S_{k+1:n,k+1:n}=\bzero$. Using~\eqref{eq:R=D} we write
$$
\begin{pmatrix}
R & p \\
p^\top &  r- \nu_{_R}
\end{pmatrix} = H+  \begin{pmatrix}
S & \bzero \\
\bzero^\top &  \upsilon-\nu_{_R}-\beta
\end{pmatrix}+ \begin{pmatrix}
D-S & q \\
q^\top &  \beta
\end{pmatrix}.
$$
Denote 
$$
\hat H:=
H+\begin{pmatrix}
S & \bzero \\
\bzero^\top & \upsilon-\nu_{_R}-\beta
\end{pmatrix}.
$$
By~\Cref{l:q=lambda}, $\upsilon=\Phi(\bar x)$ and thus $\upsilon-\nu_{_R}-\beta\geq 0$. Therefore
 $\hat H\succeq 0$.

Note that~\Cref{prop:LPexistence} does not impose any condition on the matrix $D$, except that 
$
D_{i,j}=0
$ when $i>k$ or $j>k$. Hence, the same conclusion holds for $D-S$. Therefore, if $\min_{i\in [k]} q_i>0$ or $\beta>0$, 
there exists  $T\geq 0$ and $\theta\in \R^n$ such that
\begin{equation}\label{eq:Rppc34}
\begin{aligned}
&\begin{pmatrix}
R & p \\
p^\top &  r- \nu_{_R}
\end{pmatrix} = \hat H+
\begin{pmatrix}
 \bI & \bzero \\ -F & w \\
\bzero^\top & 1
\end{pmatrix}^\top
T
\begin{pmatrix}
 \bI & \bzero \\ -F & w \\
\bzero^\top & 1
\end{pmatrix} \\ &\qquad +
\frac{1}{2}\begin{pmatrix}
q \\ \beta
\end{pmatrix}
\begin{pmatrix}
- \theta^\top  & 1+\theta^\top \bar y \end{pmatrix}+\frac{1}{2}\begin{pmatrix}
- \theta \\  1+\theta^\top \bar y\end{pmatrix}\begin{pmatrix}
q^\top & \beta
\end{pmatrix}.
\end{aligned}
\end{equation}
Note that
$$
\begin{pmatrix}
Q & d \\
d^\top &  -\nu_{_R}
\end{pmatrix} =
\begin{pmatrix}
-A_{1:n} & b_{1:n}\\
\bzero^\top & 1
\end{pmatrix}^\top
\begin{pmatrix}
R & p \\
p^\top & \quad r-\nu_{_R}
\end{pmatrix} \begin{pmatrix}
-A_{1:n} & b_{1:n}\\
\bzero^\top & 1
\end{pmatrix}.
$$
Therefore,
$$
\begin{aligned}
\begin{pmatrix}
Q & d \\
d^\top &  -\nu_{_R}
\end{pmatrix} &
\overset{\eqref{eq:Rppc34}}{=}
\begin{pmatrix}
-A_{1:n} & b_{1:n}\\
\bzero^\top & 1
\end{pmatrix}^\top \Bigg [  \hat H+
\begin{pmatrix}
 \bI & \bzero \\ -F & w \\
\bzero^\top & 1
\end{pmatrix}^\top
T
\begin{pmatrix}
 \bI & \bzero \\ -F & w \\
\bzero^\top & 1
\end{pmatrix}  \\ &   +
\frac{1}{2}\begin{pmatrix}
q \\ \beta
\end{pmatrix}
\begin{pmatrix}
- \theta^\top & 1+\theta^\top \bar y\end{pmatrix}+\frac{1}{2}\begin{pmatrix}
-\theta \\  1+\theta^\top \bar y \end{pmatrix}\begin{pmatrix}
q^\top & \beta
\end{pmatrix}\Bigg ]
\begin{pmatrix}
-A_{1:n} & b_{1:n}\\
\bzero^\top & 1
\end{pmatrix}.
\end{aligned}
$$
In view of~\eqref{eq:w}, we have
$$
\begin{pmatrix}
\bI & \bzero\\
-F & w  \\
\bzero^\top & 1
\end{pmatrix} \begin{pmatrix}
-A_{1:n} & b_{1:n}\\
\bzero^\top & 1
\end{pmatrix}=\begin{pmatrix}
-A_{1:n} & b_{1:n}\\
-A_{n+1:m} & b_{n+1:m} \\
\bzero^\top &1
\end{pmatrix} =\begin{pmatrix}
-A &\enspace  b \\
\bzero^\top & \enspace 1
\end{pmatrix},
$$
and from~\eqref{eq:c}, we have
$$
\begin{pmatrix}
- \theta^\top & 1+\theta^\top \bar y
\end{pmatrix}
\begin{pmatrix}
-A_{1:n} & b_{1:n}\\
\bzero^\top & 1
\end{pmatrix}=\begin{pmatrix}
 \theta^\top A_{1:n} & 1- \theta^\top b_{1:n}+\theta^\top \bar y
\end{pmatrix}=\begin{pmatrix}
\displaystyle -c^\top  & 1+ c^\top  \bar x
\end{pmatrix}.
$$
By~
\Cref{l:q=lambda}, we have
$$
Q \bar x+d =-A^\top \bar \lambda= -A_{1:n}^\top q,
$$
and 
$$
(A \bar x-b)^\top \bar \lambda=(A_{1:n} \bar x-b_{1:n})^\top q=0.
$$
Therefore,
$$
\begin{pmatrix}
q^\top & \beta
\end{pmatrix}
\begin{pmatrix}
-A_{1:n} & b_{1:n}\\
\bzero^\top & 1
\end{pmatrix}=\begin{pmatrix}
-q^\top A_{1:n} & \enspace q^\top b_{1:n}+\beta
\end{pmatrix}=\begin{pmatrix} \left( Q\bar x+d\right)^\top &\enspace  -\bar x^\top Q \bar x- d^\top\bar x+\beta \end{pmatrix}.
$$
Thus, for $c=-A^\top_{1:n}\theta$ it holds that
\begin{align*}
\begin{pmatrix}
Q & d\\
d^\top & -\nu_{_R}
\end{pmatrix}=&
\begin{pmatrix}
-A_{1:n} & b_{1:n}\\
\bzero^\top & 1
\end{pmatrix}^\top \hat H
\begin{pmatrix}
-A_{1:n} & b_{1:n}\\
\bzero^\top & 1
\end{pmatrix}+
\begin{pmatrix}
-A & b\\
\bzero^\top & 1
\end{pmatrix}^\top T \begin{pmatrix}
-A & b\\
\bzero^\top & 1
\end{pmatrix}  \\ &\qquad +\frac{1}{2}\begin{pmatrix}
Q\bar x + d\\
-\bar x^\top Q\bar x - d^\top \bar x +\beta
\end{pmatrix}\begin{pmatrix}
\displaystyle -c^\top  & 1+ c^\top  \bar x
\end{pmatrix}\\& \qquad +\frac{1}{2}\begin{pmatrix}
\displaystyle -c  \\ 1+ c^\top  \bar x
\end{pmatrix}\begin{pmatrix}
(Q\bar x + d)^\top & -\bar x^\top Q\bar x - d^\top \bar x +\beta
\end{pmatrix}.
\end{align*} 
We proved the feasibility of~\eqref{eq:SDPsearchofc} if $\min_{i\in [k]} q_i>0$ or $\beta>0$.   

Suppose that $\Phi(\bar x)=\upsilon>\nu_{_R}$ and  $0<\beta<\upsilon-\nu_{_R}$. 
To prove the strict feasibility of~\eqref{eq:SDPsearchofc}, it suffices to show that $\hat H\succ 0$.
Since $\hat H\succeq 0$, it remains to  argue that 
\begin{equation}\label{eq:yHy0}
\begin{pmatrix}
y^\top & y_0
\end{pmatrix} \hat H \begin{pmatrix}
y \\ y_0
\end{pmatrix} =0
\end{equation}
if and only if $y=\bzero$ and $y_0=0$. Given $S_{1:k,1:k}\succ 0$, $S_{k+1:n,k+1:n}=\bzero$ and $\upsilon-\nu_{_R}-\beta>0$,~\eqref{eq:yHy0} implies that $y_{1:k}=\bzero$ and $y_0=0$. Consequently, $$\begin{pmatrix}
y^\top & y_0
\end{pmatrix} \hat H \begin{pmatrix}
y \\ y_0
\end{pmatrix}=
\begin{pmatrix}
\bzero \\
y_{k+1:n}\\ 0
\end{pmatrix}^\top H \begin{pmatrix}
\bzero \\
y_{k+1:n}\\ 0
\end{pmatrix} \overset{\eqref{eq:defH}}{=}y_{k+1:n}^\top R_{22} y_{k+1:n}.
$$
Since $R_{22}\succ 0$, it follows that~\eqref{eq:yHy0} holds only if $y=\bzero$ and $y_0=0$.

\end{proof}

\section{Proof of~\Cref{thm:gmcfinite}}

The following two lemmas are basic observations from the steps of~\Cref{alg:generalized downhill}.
\begin{lemma}\label{l:nonincreasing} Let  $\{x_k: k=1,2,\ldots\}$ be the iterates of~\Cref{alg:generalized downhill}. Let any $k\geq 0$.
\begin{enumerate}
\item If  $Q|_{\cH_{I_{x_k}}}\succ 0$, then $$
\Phi(x_{k+1})\leq \Phi^*_{I_{x_k}} \leq \Phi(x_k).
$$
\item If $Q|_{\cH_{I_{x_k}}}\nsucc 0$, then
$$
\Phi(x_{k+1})\leq \Phi(x_k)\enspace \mathrm{and~~}
I_{x_{k+1}}\supsetneqq I_{x_k}.
$$
\end{enumerate}
\end{lemma}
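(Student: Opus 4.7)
The plan is to prove the lemma directly by case analysis, following the two branches of the conditional inside the while loop of~\Cref{alg:generalized downhill}. The iterate $x_{k+1}$ is produced only when the while loop does not break at iteration $k$, so throughout the argument I may assume that the assignment $x_{k+1}\leftarrow \bar x$ is actually executed.

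For Case 1, assume $Q|_{\cH_{I_{x_k}}}\succ 0$. First I would observe that $x_k$ is feasible for the convex QP~\eqref{eq:changed convex problem} (with $\hat x=x_k$): by definition of $I_{x_k}$ we have $a_i^\top x_k=b_i$ for all $i\in I_{x_k}$ and $a_i^\top x_k\le b_i$ for the remaining indices. Hence $\Phi^*_{I_{x_k}}\le \Phi(x_k)$. Next, $\tilde x$ is chosen as a minimizer of~\eqref{eq:changed convex problem}, so $\Phi(\tilde x)=\Phi^*_{I_{x_k}}$. Then $\bar x\in\argmin_{x\in\cF}\Psi(x,\tilde x)$ is computed. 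Since the loop does not break at this iteration, the test on~\Cref{algline:KKT} fails, i.e.\ $\Phi(\bar x)<\Phi(\tilde x)$, and hence $\Phi(x_{k+1})=\Phi(\bar x)\le \Phi(\tilde x)=\Phi^*_{I_{x_k}}\le \Phi(x_k)$, proving both inequalities in assertion~(1).

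For Case 2, assume $Q|_{\cH_{I_{x_k}}}\nsucc 0$. Then the algorithm enters the else-branch and invokes the construction at~\Cref{algline:no vertex no pd}, which is precisely~\Cref{l:no vertex no pd} applied with $\hat x=x_k$. The conclusion of that lemma gives a point $\bar x\in \cF$ satisfying $\Phi(\bar x)\le \Phi(x_k)$ and $I_{x_k}\subsetneqq I_{\bar x}$. Since $x_{k+1}\leftarrow \bar x$, both $\Phi(x_{k+1})\le \Phi(x_k)$ and $I_{x_{k+1}}\supsetneqq I_{x_k}$ follow immediately, establishing assertion~(2).

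There is no genuine obstacle here: the lemma is essentially a bookkeeping statement that repackages the properties already built into the two branches of~\Cref{alg:generalized downhill}, combined with~\Cref{l:fwer} (to justify strict decrease in Case 1 when the loop does not terminate) and~\Cref{l:no vertex no pd} (invoked as a black box in Case 2). The only subtle point worth stating explicitly is that the lemma implicitly assumes $x_{k+1}$ exists, so it refers only to iterations that pass through the final assignment line rather than exiting via the \textbf{break}.
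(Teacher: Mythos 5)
Your proof is correct and follows essentially the same reasoning as the paper, which states this lemma without a formal proof as a ``basic observation'' and justifies it via the case analysis~\eqref{eq:barxb1}--\eqref{eq:barxb3} in the discussion preceding Algorithm~\ref{alg:generalized downhill}. Your only deviations are cosmetic: the citation of~\Cref{l:fwer} is unnecessary (the strict inequality $\Phi(\bar x)<\Phi(\tilde x)$ follows directly from the failed break test, and in any case only $\leq$ is claimed), and your remark about $x_{k+1}$ existing only on non-breaking iterations is a reasonable reading that the paper handles by setting $\bar x\leftarrow\tilde x$ on the break.
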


    \begin{lemma}\label{l:sefs}
    Let $x_k$  satisfy $Q|_{\cH_{I_{x_k}}}\succ 0$.
 If the while loop does not break  at  iteration $k$, then 
 $\Phi(x_{k+1})< \Phi_{I_{x_k}}^*$.
    \end{lemma}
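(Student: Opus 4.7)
The plan is to trace through one iteration of \Cref{alg:generalized downhill} starting from $x_k$ and read off the conclusion from the algorithm's own branching structure, appealing only to the definitions already in place. Since by hypothesis $Q|_{\cH_{I_{x_k}}}\succ 0$, the test in \Cref{algline:vertexorpd} evaluates to true, so at iteration $k$ the algorithm enters the \texttt{if} branch. In particular, it computes $\tilde x \in \argmin_{x\in \cF\cap \hat\cH_{I_{x_k}}} \Phi(x)$ in \Cref{algl:findtildexPhi}. A short verification, which I would include for completeness, shows that the feasible region of the convex QP in~\eqref{eq:changed convex problem} coincides with $\cF\cap \hat\cH_{I_{x_k}}$, and hence $\Phi(\tilde x)=\Phi^*_{I_{x_k}}$.

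Next the algorithm computes $\bar x \in \argmin_{x\in\cF}\Psi(x,\tilde x)$ in \Cref{algline:findbarx1} and tests whether $\Phi(\bar x)\ge \Phi(\tilde x)$ in \Cref{algline:KKT}. By the assumption of the lemma, the while loop does \emph{not} break at iteration $k$, so this test must fail; equivalently, $\Phi(\bar x) < \Phi(\tilde x)$. The algorithm then assigns $x_{k+1} \leftarrow \bar x$, so
$$\Phi(x_{k+1}) = \Phi(\bar x) < \Phi(\tilde x) = \Phi^*_{I_{x_k}},$$
which is exactly the desired strict inequality.

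There is no real obstacle in this argument; the only thing to be attentive to is bookkeeping, namely that the $\bar x$ referenced in \Cref{algline:findbarx1}--\Cref{algline:KKT} is precisely what becomes $x_{k+1}$ when the break is \emph{not} triggered (so the overwrite $\bar x \leftarrow \tilde x$ in the \texttt{if} block is never executed at this iteration). Once this is explicit, the strict inequality $\Phi(\bar x)<\Phi(\tilde x)$ forced by the negation of the break condition transports directly to $\Phi(x_{k+1})<\Phi^*_{I_{x_k}}$, completing the proof.
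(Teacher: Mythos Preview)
Your proof is correct and follows exactly the approach the paper intends: the lemma is stated there as a ``basic observation from the steps of~\Cref{alg:generalized downhill}'' without a written proof, and your argument simply makes that observation explicit by tracing the \texttt{if} branch and noting that failure of the break condition forces $\Phi(\bar x)<\Phi(\tilde x)=\Phi^*_{I_{x_k}}$.
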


\begin{proof}[proof of~\Cref{thm:gmcfinite}]
Suppose the while loop never terminates. If $Q|_{\cH_{I_{x_k}}} \nsucc 0$, then by item 2 of \Cref{l:nonincreasing}, the index set $I_{x_{k+1}}$ strictly contains $I_{x_k}$. Thus, there must be infinitely many iterations $k$ for which  $Q|_{\cH_{I_{x_k}}} \succ 0$.
Consequently, there must exist two iterations $j > k$ such that
\begin{equation}\label{eq:isdfe}
\begin{aligned}
  Q|_{\cH_{I_{x_k}}} \succ 0,\enspace
   Q|_{\cH_{I_{x_j}}} \succ 0,\enspace
  I_{x_j} = I_{x_k}.
\end{aligned}
\end{equation}
By item 1 of \Cref{l:nonincreasing} and the non-increasing property of the sequence $\{\Phi(x_k)\}_{k \geq 0}$, we have
\begin{align}\label{a:powe}
\Phi^*_{I_{x_j}} \leq \Phi(x_j) \leq \Phi(x_{k+1}) \leq \Phi^*_{I_{x_k}}.
\end{align}

Since $I_{x_j} = I_{x_k}$, it follows that $\Phi^*_{I_{x_j}} = \Phi^*_{I_{x_k}}$, so all the inequalities in~\eqref{a:powe} are in fact equalities. Applying \Cref{l:sefs} then leads to a contradiction.

\end{proof}

\section{Auxiliary Algorithms}\label{sec-appendix:auxialgo}
\begin{algorithm}[H]
    \caption{\texttt{DNN\_Lower\_Bound}}
    \begin{algorithmic}[1]
    \Require $Q \in \cS^n$, $d \in \R^n$, $A \in \R^{m\times n}$, $b \in \R^m$.
    \State Solve~\eqref{eq:SDP-lb}-\eqref{eq:SDP-lb-d} to obtain  approximate optimal solutions ($\lambda^*$, $S^*\succeq 0$, $T^*\geq 0$) and $U^*= \begin{pmatrix}
Y^* & y^* \\
(y^*)^\top & 1
\end{pmatrix}$ with $y^*\in\cF$.
    \State $\Delta\leftarrow  \begin{pmatrix}
            Q & d \\
            d^\top & -\lambda^*
        \end{pmatrix}
        - S^* 
        - \begin{pmatrix}
            -A & b \\
            \bzero^\top & 1
        \end{pmatrix}^\top
        T^*
        \begin{pmatrix}
            -A & b \\
            \bzero^\top & 1
        \end{pmatrix}$.
        \State $\delta\leftarrow \min(0,\lambda_{\min}(\Delta))$.
        \State Compute $r>0$ such that $\cF\subset \{x: \|x\|\leq r\}$.
    \Ensure Lower bound $\lambda^*+\delta(1+r^2)$, a vector $y^*\in \cF$.
    \end{algorithmic}
    \label{alg:dnnbound}
\end{algorithm}

\begin{algorithm}[H]
    \caption{\texttt{DNN\_Cut}}
    \begin{algorithmic}[1]
    \Require $Q \in \cS^n$, $d \in \R^n$, $A \in \R^{m\times n}$, $b \in \R^m$, $\bar x\in \cF$, $\bar z\in \cF$, $\nu_R\in \R$, $\nu\in \R$.
    \State Solve~\eqref{eq:SDPsearchofc}--\eqref{eq:SDPsearchofcdual} to obtain  approximate optimal solutions ($S^*\succeq 0$, $T^*\geq 0$, $c^*\in \R^n$).
    \State Compute the error matrix $\Delta$ such that~\eqref{eq:SDPeq} holds.
        \State $\delta\leftarrow \min(0,\lambda_{\min}(\Delta))$.
        \State Compute $r>0$ such that $\cF\subset \{x: \|x\|\leq r\}$.
        \State $w\leftarrow  \nu_R+\delta(1+r^2)$
    \If { $w < \nu $}
    \State $A_c \leftarrow \begin{pmatrix}
    A\\
    (c^*)^\top
    \end{pmatrix}$, $b_c \leftarrow \begin{pmatrix}
    b\\
    (c^*)^\top \bar x+1
    \end{pmatrix}$.
    \State  $w \leftarrow \texttt{DNN\_Lower\_Bound($Q,d, A_c,b_c$).}$ \Comment{See~\Cref{alg:dnnbound}.}
    \EndIf
    \Ensure Cut $c^*\in \R^n$, and the associated cut lower bound $w$. 
    \end{algorithmic}
    \label{alg:dnncut}
\end{algorithm}

\bibliographystyle{spmpsci}      
\bibliography{references.bib}   

\end{document}